\newtheorem{theorem}{Theorem}[section]
\newtheorem{lemma}[theorem]{Lemma}
\newtheorem{proposition}[theorem]{Proposition}
\newtheorem{corollary}[theorem]{Corollary}
\theoremstyle{definition}
\newtheorem{example}[theorem]{Example}
\newtheorem{remark}[theorem]{Remark}
\newtheorem{emp}[theorem]{}
\newtheorem*{acknowledgement}{Acknowledgement}
\theoremstyle{remark}
\DeclareFontFamily{U}{wncy}{}
\DeclareFontShape{U}{wncy}{m}{n}{<->wncyr10}{}
\DeclareSymbolFont{mcy}{U}{wncy}{m}{n}
\DeclareMathSymbol{\Sh}{\mathord}{mcy}{"58}
\newcommand  {\Ker }  	{\operatorname{Ker}}
\newcommand  {\rank}    {\operatorname{rank}}
\def\mydate{\number\day\space\ifcase\month \or January\or February\or March\or 
April\or May\or June\or July\or
August\or September\or October\or November\or December\fi \space\number\year}
\DeclareFontFamily{U}{wncy}{}
\DeclareFontShape{U}{wncy}{m}{n}{<->wncyr10}{}
\DeclareSymbolFont{mcy}{U}{wncy}{m}{n}
\DeclareMathSymbol{\Sh}{\mathord}{mcy}{"58}
\numberwithin{equation}{section}
    \let\c@equation\c@thm
    \let\c@figure\c@thm
   \let\c@table\c@thm
\begin{document}

\title{The critical polynomial of a graph}

\author[Dino Lorenzini]{Dino Lorenzini}
\address{Department of Mathematics, University of Georgia, Athens, GA 30602, USA}
\curraddr{}
\email{lorenzin@uga.edu}

\subjclass[2010]{05C50, 11C20}
\if false
05  Combinatorics {For finite fields, see 11Txx}
05C View Publications (1973-now) Graph theory {For applications of graphs, see 68R10, 81Q30, 81T15, 82B20, 82C20, 90C35, 92E10, 94C15}
05C50 View Publications (1980-now) Graphs and linear algebra (matrices, eigenvalues, etc.)
\fi

\keywords{Graph; Critical polynomial; Cyclic critical group; Sandpile group; Jacobian group; Arithmetical structure; Dynkin diagram; Positive definite matrix.} 

\begin{abstract}
Let $G$ be a connected graph on $n$ vertices with adjacency matrix $A_G$. Associated to $G$ is a polynomial $d_G(x_1,\dots, x_n)$ of degree $n$ in $n$ variables, obtained as the determinant of the matrix $M_G(x_1,\dots,x_n)$, where 
$M_G={\rm Diag}(x_1,\dots,x_n)-A_G$. We investigate in this article the set $V_{d_G}(r)$  of non-negative values taken by this polynomial when $x_1,
\dots, x_n \geq r \geq 1$. We show that $V_{d_G}(1) = {\mathbb Z}_{\geq 0}$.
We show that for a large class of graphs one also has $V_{d_G}(2) = {\mathbb Z}_{\geq 0}$. When $V_{d_G}(2) \neq  {\mathbb Z}_{\geq 0}$, we show that for many graphs $V_{d_G}(2) $ is dense in $ {\mathbb Z}_{\geq 0}$. We give numerical evidence that in many cases, the complement of $V_{d_G}(2) $ in $ {\mathbb Z}_{\geq 0}$ might in fact be finite. As a byproduct of our results, we show that every graph can be endowed with an arithmetical structure whose associated group is trivial.
\end{abstract}

\dedicatory{ \mydate}
\maketitle

\begin{section}{Introduction}

Given any integer $r$, we let as usual ${\mathbb Z}_{\geq r}$
denote the set of  integers greater than or equal to $r$.
For any polynomial $f \in {\mathbb Z}[x_1,\dots, x_n]$ and integer $r \geq 1$, consider the following set of
non-negative values 
$$V_f(r):= \{ f(a_1,\dots ,a_n) \mid a_1,\dots ,a_n \in {\mathbb Z}_{\geq r} \} \cap  {\mathbb Z}_{\geq 0} .$$
 Given a   polynomial $f$, we do not know of any general results that quantify the difference between
$V_f(1) $ and $  V_f(2)$, and more generally, provide insights on the decreasing chain of sets $V_f(1) \supseteq V_f(2) \supseteq V_f(3)\supseteq \dots$. It is clear that $V_f(1) \supset V_f(2)$ can be strictly decreasing. As the example below of 
the path $A_2$ on two vertices shows, the complement of $V_f(2) $ in $V_f(1)$ can be infinite. In this article, inspired by questions in algebraic geometry, we describe the sets $V_f(1) \supseteq V_f(2)$ for a large class of polynomials $f$ associated to graphs, and give evidence that in many cases, the complement of $V_f(2) $ in $ V_f(1)$ is finite for such polynomials.

Let $G$ denote a connected undirected graph on $n$ vertices $v_1,\dots, v_n$, with no self-loops.  
Let $A_G$ denote the associated symmetric adjacency matrix. Let 
$M_G$ denote the matrix with coefficients in the polynomial ring  ${\mathbb Z}[x_1,\dots, x_n]$ defined as:
$$M_G:={\rm Diag}(x_1,\dots,x_n)-A_G.$$
Let 
$$d_G(x_1,\dots,x_n):={\rm determinant}(M_G) \in {\mathbb Z}[x_1,\dots, x_n].$$
The matrix $M_G$ is considered in \cite{C-V}, where the principal ideal of ${\mathbb Z}[x_1,\dots, x_n]$ generated by $d_G(x_1,\dots,x_n)$ is called the $n$-th {\it  critical ideal} of the graph $G$. We will call $d_G$ the {\it critical polynomial} of $G$. Since the adjacency matrix $A'_G$ associated with a different ordering of the vertices of $G$ is of the form $A'_G=P^{-1}A_GP$ for some permutation matrix $P$, the  polynomial $d_G$ is indeed independent of the choice of the ordering of the vertices of $G$. The polynomial $d_G$ is also considered in \cite{GRT}, Proposition 1, where it is shown that two simple graphs $G$ and $G'$ on $n$ vertices are isomorphic if and only if there is an ordering of the vertices of $G$ and of the vertices of $G'$ such that the associated polynomials $d_G(x_1,\dots, x_n)$ and $d_{G'}(x_1,\dots, x_n)$
are equal.

When $G$ is a graph, 
it is well-known that the   set $V_{d_G}(1)$ always contains the value $0$. Indeed, recall that the {\it degree}  $d_i$ of the vertex $v_i$ in a graph $G$ is the number of edges of $G$ attached to $v_i$. 
The {\it Laplacian} of the graph is obtained by evaluating $M_G$ at $x_i=d_i$ for $i=1,\dots,n$. 
The Laplacian of $G$ has determinant $0$ since the columns of the Laplacian add to zero, and so are linearly dependent. 
We show in this article in Theorem \ref{thm.main1} (i) that the set $V_{d_G}(1)$  can be completely described, and is always equal to ${\mathbb Z}_{\geq 0}$ . 

Motivated by geometric questions recalled in \ref{rem.why2}, we also investigate in this article the properties of the set $V_{d_G}(2)$.  Consider for instance   the example of 
the path $A_2$ on two vertices. In this case, 
 $$M_{A_2}=\left( \begin{array}{cc}
x_1&-1 \\
-1&x_2
\end{array}
\right)
\mbox{ and }  d_{A_2}(x_1,x_2)=x_1x_2-1.$$
It is clear that $V_{d_{A_2}}(1)={\mathbb Z}_{\geq 0}$. The complement of $V_{d_{A_2}}(2) $ in $V_{d_{A_2}}(1)$ is infinite
since 
$$V_{d_{A_2}}(2)={\mathbb Z}_{ \geq 1} \setminus    \{p-1 \mid p {\rm \ prime}\}.$$

To state our results on $V_{d_G}(2)$, we first recall the following concepts.
 Let $a_1,\dots,a_n \in {\mathbb Z}_{>0}$. Let us denote by $M_G(a_1,\dots,a_n)$ the integer matrix obtained from $M_G$ by evaluating $M_G$ at $x_i=a_i$, $i=1,\dots,n$. Such matrices play a considerable role in geometry, where they might be in addition
endowed with a positivity property.
Recall that a symmetric integer matrix $B \in M_n(\mathbb Z)$ is {\it  positive semi-definite} (resp. {\it positive definite}) if for every non-zero vector $X \in  {\mathbb Z}^n$, 
we have $^tXBX \geq 0$ (resp. $^tXBX >  0$).
When matrices of the form $M_G(a_1,\dots,a_n)$ are positive definite, they are $M$-matrices (\cite{Ple}, Definition).

In some geometric contexts, such as when $M_G(a_1,\dots,a_n)$ is obtained as the intersection matrix associated with a finite set of curves on a surface, the following finite abelian group $ \Phi_{M_G}$ is of interest.
Let   $M \in M_n(\mathbb Z)$. Then
\begin{enumerate}[\rm (a)]
\item If $\det(M)\neq 0$, the {\it discriminant group} $\Phi_M:={\mathbb Z}^n/{\rm Im}(M)$ has order $|\det(M)|$.
\item More generally, when ${\rm rank}(M)=\rho<n$, then ${\mathbb Z}^n/{\rm Im}(M)$  is isomorphic to the product of ${\mathbb Z}^{n-\rho}$
by a finite abelian group that we will denote $\Phi_M$. In other words, $\Phi_M$ is isomorphic to the torsion subgroup of ${\mathbb Z}^{n}/{\rm Im}(M)$.
\end{enumerate}

For instance, given a connected graph $G$, consider its Laplacian  $L=M_G(d_1,\dots,d_n)$.
 The kernel of $L$ is generated by the vector $^t\! R=(1,\dots, 1)$ and the group $\Phi_L$ 
can be identified with the group $\Ker(^t\!R)/{\rm Im}(L)$. Its order is the number of spanning trees of the graph $G$. 
The group $\Phi_L$ is found under various names in the literature (see for instance the introduction to \cite{Lor08}, and \cite{Lor89}, \cite{BHN}, \cite{Big}); in this article, we will call $\Phi_L$ the {\it critical group} of the graph. 

The pair $(L,R)$ attached to $G$ can be generalized as follows.  An {\it arithmetical structure} on $G$ (see \cite{Lor89}, Theorem 1.4) is a pair $(M,R)$ such that $M=M_G(a_1,\dots,a_n)$ for some $ a_1,\dots ,a_n \in {\mathbb Z}_{\geq 1}$
and such that $^t\!R=(r_1,\dots,r_n)$ is an integer vector with positive coefficients and $\gcd(r_1,\dots, r_n)=1$ satisfying $MR=0$.
It turns out that then $M$ is positive semi-definite of rank $n-1$. The associated group $\Phi_M$ is isomorphic to $\Ker(^t\!R)/{\rm Im}(M)$.
Such arithmetical structures arise in algebraic geometry, and much is known about the associated group $\Phi_M$ (see, e.g., \cite{Lor1990}).

Define now a subset $V_{G}(r) \subseteq V_{d_G}(r)$ as follows:
$u \in {\mathbb Z}_{\geq 0}$ belongs to $V_{G}(r)$ if and only if there exists $ a_1,\dots ,a_n \in {\mathbb Z}_{\geq r}$ such that:

\begin{enumerate}[\rm (i)]
\item $u=\det(M_G(a_1,\dots,a_n))$.
\item The matrix $M_G(a_1,\dots,a_n) $ is positive definite when $\det(M_G) \neq 0$, and positive semi-definite of rank $n-1$ when $\det(M_G) = 0$.
\item The associated group $\Phi_{M_G(a_1,\dots, a_n)}$ is cyclic.
\end{enumerate}

Recall that a graph $H$ is an {\it induced subgraph of $G$} if it can be obtained by removing from $G$ 
a non-empty set of vertices of $G$  along with all the edges of $G$ attached to any of these vertices. We are now ready to state our main theorems.

\begin{theorem} {\rm (proved in \ref{proof.{thm.main1}})} \label{thm.main1} 
Let $G$ be a connected graph. Then 
\begin{enumerate}[\rm (a)]
\item 
 $V_{G}(1)= {\mathbb Z}_{\geq 0}$.
\item 
Suppose that $G$ contains an induced connected subgraph $H$ such that $1 \in V_H(2)$.  
Then 
\begin{enumerate}[\rm (i)]
\item $V_{G}(2) $ contains 
${\mathbb Z}_{> 0}$. 
\item If $G$ is obtained from $H$ through a sequence of induced subgraphs $H=H_1 \subset H_2 \subset H_k=G$ such that for each $i=1, \dots, k-1$,  $H_{i+1}$ is constructed from $H_i$ by adding exactly one vertex of degree at least $2$, then $V_{G}(2) ={\mathbb Z}_{\geq  0}$. 
\end{enumerate}
\end{enumerate}
\end{theorem}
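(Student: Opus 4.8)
The engine of the whole proof is a single vertex-addition identity. Writing $G'$ for the graph obtained by deleting a vertex $v_n$ from $G$, and $b\in\{0,1\}^{n-1}$ for the incidence vector recording the neighbours of $v_n$ inside $G'$, cofactor expansion along the last row and column gives
\[
\det M_G(a_1,\dots,a_n)=a_n\,\det M_{G'}(a_1,\dots,a_{n-1})-{}^t b\,\operatorname{adj}\!\big(M_{G'}(a_1,\dots,a_{n-1})\big)\,b .
\]
Thus, once $a_1,\dots,a_{n-1}$ are chosen so that $\det M_{G'}=1$, the determinant of $M_G$ becomes the affine function $a_n\mapsto a_n-C$ of the single variable $a_n$, where $C:={}^t b\,\operatorname{adj}(M_{G'})\,b$. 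Two auxiliary facts will be used throughout: (A) if $M_{G'}$ is positive definite with $\det M_{G'}=1$ then $\operatorname{adj}(M_{G'})=M_{G'}^{-1}$ is a positive-definite integer matrix, and—since $M_{G'}$ is then a nonsingular $M$-matrix—its inverse has non-negative entries, all strictly positive (hence $\geq 1$) when $G'$ is connected; and (B) if some $(n-1)\times(n-1)$ minor of $M_G$ equals $1$, then by Smith normal form the torsion of $\mathbb{Z}^n/\operatorname{Im}(M_G)$ is trivial, so $\Phi_{M_G}$ is cyclic. Observe that $\det M_{G'}=1$ is exactly such a minor.

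For the positive values I would build up along a good vertex ordering. In part (a) I order the vertices so that each $v_j$ $(j\geq 2)$ has a neighbour among $v_1,\dots,v_{j-1}$ (a BFS ordering, available since $G$ is connected), put $a_1=1$, and at each step choose $a_j:=1+C_j$ to keep the running determinant equal to $1$; as $v_j$ meets an earlier vertex, fact (A) gives $C_j\geq 1$, so $a_j\geq 2$. At the last vertex I set $a_n:=u+C_n$, realising $\det M_G=u$. Positive definiteness follows from Sylvester's criterion (all leading principal minors equal $1$, the last equal to $u>0$) and cyclicity from fact (B), giving $\mathbb{Z}_{>0}\subseteq V_G(1)$. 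Part (b)(i) is identical, except that the build-up starts from the witness of $1\in V_H(2)$—a positive-definite $M_H$ of determinant $1$ with all entries $\geq 2$—and extends through $G$ by a BFS ordering of the remaining vertices; the choices $a_j=1+C_j$ and $a_n=u+C_n$ are now all $\geq 2$, yielding $\mathbb{Z}_{>0}\subseteq V_G(2)$.

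It remains to realise the value $0$, which is where arithmetical structures enter; assume $n\geq 2$. For part (a) I pick $v_n$ to be a non-separating vertex (a leaf of a spanning tree), so that $G'=G\setminus v_n$ is connected, realise $\det M_{G'}=1$ by the build-up above, and set $a_n:=C_n$, forcing $\det M_G=0$. Putting ${}^t R:=\big({}^t(\operatorname{adj}(M_{G'})\,b),\,1\big)$, the identity shows directly that $M_GR=0$; since $M_{G'}^{-1}$ has positive entries, $R$ is strictly positive, and $\gcd$ of its entries is $1$ because its last entry is $1$. Hence $(M_G,R)$ is an arithmetical structure, so $M_G$ is positive semi-definite of rank $n-1$, and fact (B) makes $\Phi_{M_G}$ trivial; thus $0\in V_G(1)$, completing part (a). For part (b)(ii) I run the same construction with $v_n$ the last vertex of the given chain and $G'=H_{k-1}$, which is connected and satisfies $1\in V_{G'}(2)$ by part (b)(i).

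The one genuinely delicate point—and the reason the hypothesis "degree at least $2$" appears in part (b)(ii)—is the requirement $a_i\geq 2$ in the value-$0$ construction, that is, the need for $a_n=C_n\geq 2$. Here the positivity in fact (A) is essential: because $G'$ is connected, $\operatorname{adj}(M_{G'})=M_{G'}^{-1}$ has every entry $\geq 1$, so if $v_n$ has neighbour set $S$ with $|S|\geq 2$ then $C_n={}^t b\,\operatorname{adj}(M_{G'})\,b=\sum_{i,j\in S}(M_{G'}^{-1})_{ij}\geq |S|^2\geq 4$. Entrywise positivity rules out the cancellations that would otherwise allow $C_n=1$; for a degree-one vertex a single diagonal term could equal $1$, forcing $a_n=1$, which is precisely why degree $\geq 2$ cannot be dropped from the conclusion $V_G(2)=\mathbb{Z}_{\geq 0}$. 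Granting $C_n\geq 2$, the construction yields an arithmetical structure with all $a_i\geq 2$ and trivial group, so $0\in V_G(2)$; combined with part (b)(i) this gives $V_G(2)=\mathbb{Z}_{\geq 0}$. I expect the bookkeeping of the build-up (connectivity of the prefixes, positive definiteness via Sylvester) to be routine, and the entrywise positivity of the inverse $M$-matrix to be the main structural ingredient to nail down carefully.
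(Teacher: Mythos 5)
Your proof is correct and is essentially the paper's own argument: your vertex-addition identity is Lemma \ref{lem.matrix}, your facts (A) and (B) are \ref{emp.classical} (a) and the minor criterion for cyclicity, and your build-up keeping the running determinant equal to $1$, the final substitution $a_n=u+C_n$, and the kernel vector ${}^tR=\bigl({}^t(\operatorname{adj}(M_{G'})\,b),\,1\bigr)$ reproduce Proposition \ref{pro.induction} (a)--(c) together with the chain of connected induced subgraphs $H\subset H_1\subset\dots\subset H_k=G$ used in \ref{proof.{thm.main1}}. The one caveat: the paper's graphs may have multiple edges, so $b$ must record edge multiplicities rather than lie in $\{0,1\}^{n-1}$; then ``degree at least $2$'' also covers a doubled edge to a single neighbour (precisely the case Proposition \ref{pro.induction} (c) spells out), and your key estimate survives verbatim, since all entries of $\operatorname{adj}(M_{G'})$ are at least $1$ and hence $C_n={}^tb\,\operatorname{adj}(M_{G'})\,b\geq\bigl(\sum_i b_i\bigr)^2\geq 4$.
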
 
As noted above, we always have $0 \in V_{d_G}(1)$ because the determinant of the Laplacian $L$ of $G$ is $0$.
On the other hand, the critical group $\Phi_L$ associated with  $L$ is not always cyclic. 
The question of determining the proportion of connected graphs having cyclic critical groups was raised for instance in \cite{Lor08}, section 4, and progress on this question for random graphs can be found in \cite{Woo}.
 Part (a) of Theorem \ref{thm.main1} implies  that $0 \in V_G(1)$, and in particular  that on any graph $G$,
there always exists an arithmetical structure $(M,R)$ whose associated group $\Phi_M$ is cyclic (in fact, even trivial, see \ref{pro.trivial}).

 Recall that a graph is {\it simple} if there is at most one edge between any two vertices of $G$. The smallest simple graphs $H$ with $1 \in V_{H}(2)$ both have $4$ vertices: 
the extended cycle ${\mathcal C}_3^+$ (see \ref{ex.lollipop}) and the cone $C(A_3)$ on the path $A_3$ on $3$ vertices (see \ref{ex.cone}). Part (b) of Theorem \ref{thm.main1}  
allows us to prove the following general theorem.

\begin{theorem} \label{thm.exceptions} {\rm (proved in \ref{proof.{thm.main2}})}\label{thm.main2}
Let $G$ be a connected simple graph.
Then $V_G(2) \supset {\mathbb Z}_{>0}$, except possibly if $G$   is  a tree, a cycle  ${\mathcal C}_n$,
a  complete bipartite graph ${\mathcal K}(p,q)$, 
a complete graph ${\mathcal K}_n$, $  n \leq 13$,   an extended cycle ${\mathcal C}_n^+$, $n\leq 7$, or the cone $C(A_3)$. 
\end{theorem}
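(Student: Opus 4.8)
The plan is to deduce Theorem \ref{thm.main2} from part (b)(i) of Theorem \ref{thm.main1}. That result yields $V_G(2)\supseteq{\mathbb Z}_{>0}$ as soon as $G$ has a connected induced subgraph $H$ with $1\in V_H(2)$, so it suffices to show that every connected simple graph $G$ outside the listed families either contains such a ``good'' proper induced subgraph, or belongs to one of the two families $\{{\mathcal K}_n\}_{n\ge 14}$ and $\{{\mathcal C}_n^+\}_{n\ge 8}$ that I will treat by a separate direct construction. Two good seeds are already available on four vertices, namely ${\mathcal C}_3^+$ and the diamond $C(A_3)$, for which $1\in V_{{\mathcal C}_3^+}(2)$ and $1\in V_{C(A_3)}(2)$; the bulk of the proof consists in locating a good induced seed inside an arbitrary non-exceptional $G$.

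I would first dispose of the graphs $G$ that contain a triangle. The key combinatorial observation is that a connected graph carrying a triangle and containing neither an induced ${\mathcal C}_3^+$ nor an induced diamond must be complete: fixing a triangle $\{a,b,c\}$, a vertex adjacent to exactly one of $a,b,c$ would induce a ${\mathcal C}_3^+$ and a vertex adjacent to exactly two would induce a $C(A_3)$, so every neighbour of the triangle is adjacent to all three; applying this to the sub-triangles shows that these vertices form a clique, and connectivity propagates the clique to all of $G$. (Equivalently, a connected ${\mathcal C}_3^+$-free graph is triangle-free or complete multipartite, and diamond-freeness then forces a complete graph.) Hence a triangle-containing $G$ is either some ${\mathcal K}_n$, or it contains an induced ${\mathcal C}_3^+$ or $C(A_3)$ and we are done by Theorem \ref{thm.main1}(b)(i); the only escapes are $G={\mathcal C}_3^+$ and $G=C(A_3)$ themselves, which have no proper good induced subgraph and sit on the exceptional list.

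The substantial work is the triangle-free case, which I expect to be the main obstacle. Here I would fix a shortest cycle $C$ of length $g\ge 4$ and argue according to $g$ and to how the remaining vertices attach to $C$ while preserving triangle-freeness. The aim in each configuration is to exhibit a proper induced copy of one of a fixed collection of good triangle-free seeds --- determined once and for all by direct computation, and including the large extended cycles ${\mathcal C}_n^+$ ($n\ge 8$) together with finitely many sporadic graphs on few vertices --- or else to conclude that $G$ is a cycle ${\mathcal C}_n$, a complete bipartite graph ${\mathcal K}(p,q)$, or an extended cycle ${\mathcal C}_n^+$. A vertex of degree at least $3$ off an induced long cycle, or a chordless path rejoining the cycle, should be leveraged to extract a good seed; the complete bipartite graphs have to be set aside because all of their connected induced subgraphs are again complete bipartite and hence (by computation) never good. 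The difficulty is precisely that triangle-free graphs admit many attachment patterns, so one must check that no non-exceptional configuration slips through without containing a seed; this case analysis is where the combinatorial effort concentrates.

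Finally I would pin down the two numerical thresholds. Both ${\mathcal K}_n$ and ${\mathcal C}_n^+$ have the feature that their proper connected induced subgraphs are again complete graphs, cycles, or trees, none of which are good, so for these families the conclusion cannot come from Theorem \ref{thm.main1}(b)(i); instead one computes $V_G(2)$ directly, producing for each $u\in{\mathbb Z}_{>0}$ explicit entries $a_1,\dots,a_n\ge 2$ with $\det M_G(a_1,\dots,a_n)=u$, the matrix positive definite, and $\Phi_{M_G(a_1,\dots,a_n)}$ cyclic. The cut-offs $n\le 13$ for complete graphs and $n\le 7$ for extended cycles record exactly where these explicit constructions first succeed, and carrying out these two boundary computations is the remaining delicate point.
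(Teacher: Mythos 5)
Your overall architecture is the paper's: prove a structure theorem (every connected simple graph outside the exceptional list contains an induced ${\mathcal C}_n^+$ or $C(A_3)$), observe that these seeds $H$ satisfy $1\in V_H(2)$, and invoke Theorem \ref{thm.main1}(b)(i); your triangle argument is essentially the case $m=3$ of the paper's Theorem \ref{thm.types}, and your triangle-free sketch corresponds to the cases $m\geq 4$ there (where the paper shows $m\geq 5$ is impossible and $m=4$ forces ${\mathcal K}(p,q)$). The genuine gap is in your treatment of the two remaining infinite families ${\mathcal K}_n$, $n\geq 14$, and ${\mathcal C}_n^+$, $n\geq 8$. Your premise that their proper connected induced subgraphs are ``complete graphs, cycles, or trees, none of which are good'' is false, and the paper's proof runs exactly through the good subgraphs you rule out. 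Trees can be good: $d_{E_8}(2,\dots,2)=1$ with $M_{E_8}(2,\dots,2)$ positive definite, so $1\in V_{E_8}(2)$ (Proposition \ref{pro.Dynkin}); and for every $n\geq 8$ the graph ${\mathcal C}_n^+$ contains an induced copy of $E_8$ (the pendant vertex, the attachment vertex, and arcs of lengths $4$ and $2$ around the cycle), which is precisely how Proposition \ref{cor.tadpole} settles ${\mathcal C}_n^+$ for $n\geq 8$, again via Theorem \ref{thm.main1}(b). Complete graphs can be good too: $1\in V_{{\mathcal K}_{13}}(2)$, so for $n\geq 14$ the graph ${\mathcal K}_n$ contains the good proper induced seed ${\mathcal K}_{13}$ (Corollary \ref{cor.complete}).

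Because of this false premise, you replace the correct mechanism by a ``separate direct construction'': for each $u\in{\mathbb Z}_{>0}$, exhibit entries $a_i\geq 2$ making $M_G(a_1,\dots,a_n)$ positive definite of determinant $u$ with cyclic discriminant group. You give no indication of how to do this, and for complete graphs it cannot be done by inspection: by Proposition \ref{complete}, even deciding whether $1\in V_{{\mathcal K}_n}(2)$ is equivalent to solving the unit-fraction equation \eqref{Egypt} in integers $y_i\geq 3$, and the bound $n\leq 13$ in the theorem rests on a nontrivial imported solution (Girgensohn's, recorded in \cite{B-V}) for $n=13$, while for $n\leq 7$ no such solution exists. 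Your reading of the cut-offs as recording ``where these explicit constructions first succeed'' is also off for extended cycles: in fact $1\in V_{{\mathcal C}_n^+}(2)$ for \emph{every} $n\geq 3$, by the computation $\det M_{{\mathcal C}_n^+}(t,2,\dots,2)=(n+1)t-(3n+2)$ with $t=3$ (Example \ref{ex.lollipop}); the threshold $n\geq 8$ marks instead the first appearance of a good \emph{proper} induced subgraph ($E_8$), which is what Theorem \ref{thm.main1}(b) requires, since in this paper an induced subgraph is by definition obtained by deleting a non-empty set of vertices. So the two ``boundary computations'' you defer are not computations at all within your framework; without the seeds $E_8$ and ${\mathcal K}_{13}$ (the latter resting on the Egyptian-fraction input), your argument cannot reach the stated thresholds $n\leq 13$ and $n\leq 7$.
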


Whether the value $1$ belongs to $V_{G}(2) $ when $G$ is a tree 
was investigated in \cite{B-D, B-H, B-J, B-V, BJM}. 
Corollary 11 in \cite{B-J} gives a list of trees $H$ such that if a tree $G$ of diameter at least $4$ contains such $H$, then $1 \in V_{G}(2) $. The smallest such trees have $8$ vertices, starting with the Dynkin diagram $E_8$.
 
\if false
Three infinite families of trees  where $1 \notin V_{G}(2)$ are given in \cite{B-D}, and we recall them in Section \ref{sec.Dynkin} on Dynkin diagrams. Two of these families also have $0 \notin V_{d_G}(2)$. It would be interesting to classify the graphs where  $0 \notin V_{d_G}(2)$. Three additional such examples are found in Remark \ref{rem.0notinV}.
\fi
 
Recall that the star ${\mathcal S}_n$ on $n \geq 4$ vertices is  a tree with a  vertex of degree $n-1$. 
It is shown in \cite{B-H}, Proposition 6, that  $1 \notin V_{{\mathcal S}_n}(2) $ when $n\leq 59$.
No integer $n \geq 4$ is known such that $1 \in V_{{\mathcal S}_n}(2) $. 

Denote by ${\mathcal S}_n^+$ the graph obtained by adjoining a single vertex to the star  ${\mathcal S}_n$ and linking it with a single edge to a vertex of degree $1$ in  ${\mathcal S}_n$. 
The family ${\mathcal S}_n^+$, $n \geq 4$, is another family of graphs where none of its members are known to have $1 \in V_{{\mathcal S}^+_n}(2) $.

When Part (b) of Theorem \ref{thm.main1} does not apply,
the  set $V_{G}(2)$ seems very difficult to describe precisely.
An example where the complement of $V_{G}(2)$ in $V_{d_G}(2)$ is infinite is given in \ref{rem.ComplementInfinite}.
Theorem \ref{thm.density} can often be used to prove that, at least, $V_{d_G}(2)$ is {\it dense} in ${\mathbb Z}_{\geq 0}$ (the definition of  dense  in this context is recalled in \ref{def.dense}). We state below an explicit consequence of Theorem \ref{thm.density} which complements Theorem \ref{thm.main2}.
It would be interesting to determine whether  $V_{d_G}(2)$ is always dense in ${\mathbb Z}_{\geq 0}$.

\begin{theorem} {\rm (proved in \ref{proof.{thm.denselist}})} \label{thm.denselist}
Let $G$ be one of the following graphs: 
\begin{enumerate}[\rm (a)]
\item $G$ is a Dynkin diagram,   an extended Dynkin diagram,  a star ${\mathcal S}_n$, or an extended star ${\mathcal S}_n^+$.
\item $G$ is a cycle ${\mathcal C}_n$ or  an extended cycle ${\mathcal C}_n^+$.
\item $G$ is  the cone $C(A_3)$,  the complete graph ${\mathcal K}_n$,   the complete bipartite graph ${\mathcal K}(2,n)$ or ${\mathcal K}(3,n)$.
\end{enumerate}
Then the set $V_{d_G}(2)$ is dense in ${\mathbb Z}_{\geq 0}$.
\end{theorem}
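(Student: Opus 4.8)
The engine for all three cases is the same, and rests on one structural fact: because each variable $x_k$ appears in exactly one entry of $M_G$, the critical polynomial $d_G$ is affine-linear in each $x_k$ separately, i.e.\ multilinear. Consequently, if one selects two vertices $v_i,v_j$ and assigns integer values $a_k\ge 2$ to every remaining variable, the specialization has the form
$$
d_G = P\,x_ix_j + Q\,x_i + R\,x_j + S ,
$$
where $P=\det\!\big(M_{G\setminus\{v_i,v_j\}}\big)$ at the chosen values and $Q,R,S$ are the corresponding signed principal minors of $M_G$. The plan is to feed each such two-variable specialization into Theorem \ref{thm.density}: its hypotheses are conditions on the coefficients $P,Q,R,S$ guaranteeing that the value set $\{Pab+Qa+Rb+S : a,b\ge 2\}$ is dense in the sense of \ref{def.dense}. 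Thus for every graph in the list it suffices to (i) pick a convenient pair $v_i,v_j$ and values $a_k\ge2$, (ii) record $P,Q,R,S$, and (iii) check the numerical hypothesis of Theorem \ref{thm.density}.

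I would then treat the three groups by supplying explicit specializations. For the cycle $\mathcal C_n$ and extended cycle $\mathcal C_n^+$ I would delete a pair of adjacent vertices, so that $G\setminus\{v_i,v_j\}$ becomes a path (a forest) whose critical polynomial is a continuant that is easy to evaluate; for $\mathcal K_n$ I would use $\det(\mathrm{Diag}(a_k+1)-J)=\prod(a_k+1)-\sum_l\prod_{k\ne l}(a_k+1)$ and delete two vertices to land in $\mathcal K_{n-2}$; for $\mathcal K(2,n)$ and $\mathcal K(3,n)$ I would keep the small part fixed and exploit the resulting low-rank structure of the determinant; and for $C(A_3)$ and the finite list of Dynkin and extended Dynkin diagrams one simply computes the (small) determinants directly. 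In each instance the leading coefficient $P$ is a product of values $\ge2$, hence $P\ge2$; this is unavoidable for precisely these graphs, since they are the ones excluded from Theorem \ref{thm.main2}, i.e.\ they contain no induced subgraph $H$ with $1\in V_H(2)$, so one cannot normalize $P$ to $1$ and reduce to the clean model $\{ab-c:a,b\ge2\}$ of shifted composites. One therefore records $P,Q,R,S$ and verifies the coprimality and nonvanishing conditions of Theorem \ref{thm.density}, choosing the fixed values $a_k$ (for instance pairwise coprime primes) so that the coefficients share no common factor and $PS-QR\ne0$.

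The stars $\mathcal S_n$ and extended stars $\mathcal S_n^+$ are where the real difficulty lies, and I expect them to be the main obstacle. Removing any two vertices of a star leaves either a smaller star or a set of isolated vertices, so $P=\det(M_{G\setminus\{v_i,v_j\}})$ is forced to be a product of assigned values and is $\ge 2^{\,n-2}$; no specialization avoids a large leading coefficient. The arithmetic meaning of Theorem \ref{thm.density} is then essential: writing $P\,d_G+(QR-PS)=(Px_i+R)(Px_j+Q)$, the attained values $v$ are exactly those for which $Pv+(QR-PS)$ is a product of two integers lying in the residue classes $R$ and $Q$ modulo $P$ and exceeding the thresholds $2P+R,\ 2P+Q$. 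Density of $V_{d_G}(2)$ thus reduces to a statement about the distribution of integers admitting a divisor in a prescribed residue class and range, which is the analytic content packaged in Theorem \ref{thm.density}. The care needed here is twofold: first, to choose the fixed leaf-values so that the modulus data is coprime and the target residue classes, taken over several specializations, cover enough of $\mathbb Z_{\ge 0}$; second, to confirm that the resulting family of products satisfies the density hypothesis of Theorem \ref{thm.density} rather than being trapped in a thin residue class. This is exactly the point at which the open problem that no $\mathcal S_n$ is known to satisfy $1\in V_{\mathcal S_n}(2)$ blocks the stronger conclusion $V_{\mathcal S_n}(2)\supseteq\mathbb Z_{>0}$ and leaves only density.
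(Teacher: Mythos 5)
Your skeleton does match the paper's: the paper also fixes all but two variables and feeds the resulting linear data into Theorem \ref{thm.density}. Concretely, it removes one vertex $v$ and distinguishes one vertex of $G_v$ carrying a variable $t$, and its hypothesis (b) requires $d_{G_v}(t,a_2,\dots,a_{n-1})=\alpha t-\beta$ with $\gcd(\alpha,\beta)=1$; in your bilinear notation $\alpha=P$ and $\beta=-Q$, so the hypothesis you must verify is $\gcd(P,Q)=1$ (coprimality of the coefficient of $x_ix_j$ with the coefficient of the deleted vertex's variable). It is \emph{not} ``the coefficients share no common factor and $PS-QR\neq 0$'': the condition $PS-QR\neq0$ appears nowhere in the theorem, and all-coefficient coprimality is insufficient for its mechanism --- e.g.\ $(P,Q,R,S)=(6,2,3,1)$ has overall gcd $1$, yet the progression $Pt+Q=6t+2$ contains no odd prime, which kills the Dirichlet step. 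For cycles, complete graphs, and the Dynkin families (which, note, include the infinite families $A_n$, $D_n$, $\tilde D_n$, not a finite list), your plan would then go through essentially as in the paper's Corollary \ref{cor.density}.

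The genuine gap is precisely the case you flag as the main obstacle: the stars and extended stars, and with them ${\mathcal K}(2,n)$ and ${\mathcal K}(3,n)$, whose once-deleted subgraphs are stars or smaller bipartite graphs. There you abandon the theorem's mechanism and appeal to ``the distribution of integers admitting a divisor in a prescribed residue class and range,'' asserting this is the analytic content packaged in Theorem \ref{thm.density}. It is not: the theorem's proof uses only Dirichlet's theorem on primes in arithmetic progressions, the divergence of $\sum 1/p$ over such primes, and the density of a union of arithmetic progressions with pairwise coprime (prime) moduli; it contains no divisor-distribution input, and the multiplication-table-type problem you substitute for it is hard and not obviously of density one. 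The star case is in fact no harder than the others; what you are missing is the right specialization. For $H={\mathcal S}_m$, put the free variable $t$ at the \emph{center} and distinct primes $p_2,\dots,p_m$ on the leaves, so that
$$d_H(t,p_2,\dots,p_m)=\Bigl(\prod_{i} p_i\Bigr)t-\sum_{i}\prod_{j\neq i}p_j,$$
and the two coefficients are coprime because each $p_i$ divides every term of the sum except the single term omitting $p_i$. The large leading coefficient that worried you ($P\geq 2^{n-2}$) is irrelevant: Dirichlet supplies infinitely many primes in a progression regardless of the size of its modulus, and those primes are exactly the pairwise coprime values that Theorem \ref{thm.density} turns into a density-one union of progressions. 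This is how the paper handles ${\mathcal S}_n$, ${\mathcal S}_n^+$, ${\mathcal K}(2,n)$, and ${\mathcal K}(3,n)$ (Corollary \ref{cor.density}, cases (iv) and (vi)); without this step, your proof of part (a) and of the bipartite half of part (c) is incomplete. A side remark: your claim that the listed graphs contain no induced subgraph $H$ with $1\in V_H(2)$ is also false in general ($\tilde E_8$ contains $E_8$, ${\mathcal K}_n$ with $n\geq 14$ contains ${\mathcal K}_{13}$, and ${\mathcal C}_n^+$ itself has $1\in V_{{\mathcal C}_n^+}(2)$), though nothing in the density argument depends on it.
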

 The only  examples of graphs where the complement of $V_{d_G}(2)$ in ${\mathbb Z}_{\geq 0}$ is {\it known  to be infinite} are the {\it banana graphs}, the graphs on two vertices linked by $e\geq 1$ edges. Thus it is also natural to wonder whether the complement of $V_{d_G}(2)$ in ${\mathbb Z}_{\geq 0}$ is not only of density $0$ but in fact is finite for most graphs.

Our study  was motivated by considerations from algebraic geometry.
In the next remark, we give a brief exposition of how the results of this article pertain to this field of research.

\begin{remark} \label{rem.why2}
Matrices of the form $M_G(a_1,\dots,a_n)$ with $a_1,\dots,a_n \geq 1$
arise in algebraic geometry when considering a finite collection of curves $\{C_i$, $i=1,\dots, n\}$, on a non-singular surface $S$. Attached to each pair of distinct curves $C_i$ and $C_j$ is an {\it intersection number} $(C_i \cdot C_j) \geq 0$ which counts (with multiplicities) how many times $C_i$ and $C_j$ intersect (see, e.g., \cite{Liubook}, 9.1). The {\it dual graph} $G$ attached to the configuration of curves is the graph on $n$ vertices $v_1,\dots, v_n$ such that when $i \neq j$,  $v_i$ is linked to $v_j$ by $(C_i \cdot C_j)$ edges.  

Each curve $C_i$ on $S$  has a {\it self-intersection number} $(C_i \cdot C_i)$, and these numbers are known to be strictly negative when the configuration $\cup_{i=1}^n C_i$ occurs as the exceptional divisor of the resolution of a singularity.  The {\it intersection matrix} $( (C_i \cdot C_j))_{1 \leq i,j\leq n}$ is then of the form $-M_G(a_1,\dots,a_n)$ with $a_1,\dots,a_n \geq 1$, and is known to be negative-definite. It is often the case that a {\it minimal} resolution of singularities leads to a matrix $M_G(a_1,\dots,a_n)$ with $a_1,\dots,a_n \geq 2$, which explains our interest in understanding the possible values of the determinants of such matrices when $a_1,\dots,a_n \geq 2$. 

Matrices $M_G(a_1,\dots,a_n)$ which are only positive semi-definite
arise from configuration of curves  associated with a degeneration of a non-singular curve, 
and in this case   
{\it minimal} special fibers of degenerating curves generally also lead to matrices $M_G(a_1,\dots,a_n)$ with $a_1,\dots,a_n \geq 2$.  

The collection of curves $\{C_i, i=1,\dots, n\}$ attached to the resolution of a surface singularity, and its associated intersection matrix, play an important role in understanding the singularity. It is still an open problem to completely characterize the matrices that can occur as intersection matrices associated to ${\mathbb Z}/p{\mathbb Z}$-quotient surface singularities
in prime characteristic $p$. 
In previous works on
such singularities, 
the author showed that the intersection matrix $M$ associated with the resolution of such quotient singularity
can only have determinant equal to a power of $p$, and that the finite group $\Phi_M$ associated to $M$ is killed by $p$
(see \cite{Lor13}, 3.18, and \cite{L-S}, 6.3, 7.1, for examples). 
The results of this article indicate that matrices $M_G(a_1,\dots,a_n)$ of prime determinant $p$ are plentiful.

Motivated by the problem of classifying resolutions of cyclic quotient singularities,
it is natural to wonder, given a graph $G$,  whether, for all but finitely many primes $p$, there always exists a set of diagonal elements $a_1,\dots,a_n \geq 2$ (depending on $p$) such that $M_G(a_1,\dots,a_n)$ has determinant $p$.
The answer to this question would be positive if it were possible to show, more generally, that the complement of the set $V_G(2)$ in ${\mathbb Z}_{\geq 0}$ is finite.
\end{remark}

\begin{remark} It is a classical problem
in number theory to study the integer values taken by an integer polynomial
$f(x_1,\dots,x_n)$.   When $G$ is a graph, the polynomial $d_G(x_1,\dots, x_n)$ is a polynomial of degree $n$ in $n$ variables consisting only of {\it squarefree} monomials.
A famous polynomial in number theory, $f:=x_1^n+\dots + x^n_n$, is also of degree $n$ in $n$ variables
but is indeed as far as having squarefree monomials as possible. The problem of determining the set $V_{f}(1)$ in this case is  
related to the classical Waring's problem. 
When $n=2$, the set $V_f(1)$ has positive density in ${\mathbb Z}_{\geq 0}$, but does not contain any integer that is congruent to $3$ modulo $4$. 
When $n=3$, the set $V_f(1)$ is infinite, but does not contain any integer that is congruent to $4$ or $5$ modulo $9$. It is an open problem in this case
to determine whether the set $V_f(1)$  has positive density (see \cite{DHL} for positive evidence towards this question).

Let us mention here another analogous question in number theory
where the set  $V_g(1)$ in this case misses only finitely many values, but where it is still an open question to completely determine $V_g(1)$. 
\if false
The following is an example of a polynomial of degree $2$ in three variables where it is possible to show that $MV_{d_G}(1)$ is finite and not empty, but where the complete determination of the set $MV_{d_G}(1)$ is not yet known.
\fi
The 
polynomial $g:=xy+yz+zx$ 
consists only of squarefree monomials. 
When $i>0$ is any integer, the equation   $xy+yz+zx=i$ always has solutions in positive integers except for at most $19$ values of $i$ (\cite{B-C} Theorem 1.1).
The first $18$ such values are known explicitly and are in the interval $[1,462]$. If the Generalized Riemann Hypothesis is assumed, the complement of the set $V_{g}(1)$ in ${\mathbb Z}_{>0}$ consists exactly of these $18$ known values.
\end{remark}

This article exhibits many graphs $G$ where the 
set $V_{d_G}(2)$
misses some positive values, but computations nevertheless suggest that it contains all positive values except for finitely many (see, e.g., \ref{ex.lollipop}, \ref{ex.Dynkin4}, \ref{ex.paths}).
It would be interesting to determine if these polynomials $d_G(x_1,\dots,x_n)$ indeed have this property.
The easiest example of such polynomial is  $d_{A_3}(x,y,z)=xyz-x-z$, associated with
the path $A_3$ on $3$ vertices. Computations suggest that the complement of the set $V_{A_3}(2)$ in ${\mathbb Z}_{\geq 0}$  is contained
in the set $[ 0,1, 2, 3, 5, 6, 9, 11, 14, 15, 35, 105, 510 ]$ 
(see  Proposition \ref{pro.density4}).
\end{section}

 \begin{section}{First Main Theorem}
 Let $M \in M_n({\mathbb Z})$. We will use the following standard notation. 
 Let $M_{ij}$ denote the submatrix
obtained  by removing from $M$ its $i$-th row and its $j$-th column. 
Let $M^*$ denote the {\it adjoint} of $M$, with $MM^*=(M^*)M=\det(M) {\rm Id}_n$. 
By definition, $(M^*)_{ij}=(-1)^{i+j} \det(M_{ji})$.
The matrix $M$ is a {\it positive} matrix if all the entries of $M$ are positive.

The group $\Phi_M$ is isomorphic by definition to the torsion subgroup of ${\mathbb Z}^n/{\rm Im}(M)$. If $0<{\rank}(M)=\rho<n$, then there exist two matrices $P$ and $Q$ in ${\rm GL}_n({\mathbb Z})$ such that
$PMQ$ is a diagonal matrix of the form ${\rm Diag}(0,\dots, 0,f_1,\dots, f_{\rho})$, with $f_1 \mid f_2 \mid \dots \mid f_{\rho}$. This diagonal matrix is called the Smith Normal Form of $M$.
 The group $\Phi_M$ is isomorphic to $\prod_{i=1}^\rho {\mathbb Z}/f_i{\mathbb Z}$, and thus  $\Phi_M$ is cyclic if and only if $\rho=1$ or $f_{\rho-1}=1$. 

\begin{emp} \label{emp.classical}
Let $G$ be a connected graph on $n$ vertices. The matrices $M=M_G(a_1,\dots,a_n)\in M_n({\mathbb Z})$ with $a_1,\dots, a_n \geq 1$ have several very useful properties when they are positive semi-definite.

\begin{enumerate}[\rm (a)]
\item Assume that $\det(M) \neq 0$ and that $M$ is positive definite. Then the inverse $M^{-1}$ of $M$ is a positive matrix. 
\item  Assume that $\det(M) = 0$ and that $M$ is positive semi-definite of rank $n-1$. Then there exists a unique vector  $R$ in ${\mathbb Z}_{>0}^n$
with coprime coefficients and such that $MR=0$. We have $M^*=|\Phi_M| R (^t\! R)$. 
\item 
Assume that $M_G(a_1,\dots,a_n)$ is as in (a) or (b). For any non-zero vector $(b_1,\dots, b_n) \in {\mathbb Z}_{\geq 0}^n$, the matrix $M_G(a_1+b_1,\dots,a_n+b_n)$ is positive definite,
and $$\det M_G(a_1,\dots,a_n) < \det M_G(a_1+b_1,\dots,a_n+b_n).$$
\end{enumerate}
\end{emp}
Property (a) is F15, page 180 of \cite{Ple}. Property (b) follows from Proposition 1.1 and Theorem 1.4 in \cite{Lor89}. Property (c) is A3, page 179 of \cite{Ple}, when $M_G(a_1,\dots, a_n)$ is positive definite. If it is only positive semi-definite, show first that $M_G(a_1,\dots, a_i+1,\dots, a_n)$ is definite positive for any $i$, and apply A3 to these $n$ positive definite matrices.

\begin{remark}
When $M=M_G(a_1,\dots,a_n)$ is positive definite as in (a), there exists a unique positive vector $R$ minimal with the property  that $MR$ is positive (\cite{Art}, page 132). This vector is called the {\it fundamental vector} of the matrix $M$. The quantity $
(^tR)MR$ is an important numerical invariant associated with $M$. When $M=M_G(a_1,\dots,a_n)$ is positive semi-definite of rank $n-1$,
numerical invariants associated with the arithmetical structure $(M,R)$ described in (b) are discussed in \cite{Lor12}, 2.1 and 4.1.
\end{remark}

\begin{remark} \label{rem.finiteness}
\if false
 Let $w \in V_{d_G}(1)$. It may happen that there are infinitely many points
 $ (a_1,\dots ,a_n) \in {\mathbb Z}^n_{\geq 1}$ such that $\det M_G(a_1,\dots,a_n)=w$  
 when 
 $M_G(a_1,\dots,a_n)$ is not assumed to be positive  semi-definite.
 For instance, consider the graphs $G_0$ and $G_1$ below. We have marked each vertex with the corresponding 
 coefficient of a diagonal $(a_1,\dots, a_n)$, such that for any values of $x,y$, we have $\det M_{G_0}(a_1,\dots, a_n)=0$ and $\det M_{G_1}(a_1,\dots, a_n)=1$:
$$\begin{array}{cc}
G_0 \quad
\begin{tikzpicture}
[node distance=0.8cm, font=\small] 
\tikzstyle{vertex}=[circle, draw, fill, inner sep=0mm, minimum size=0.8ex]
\node[vertex]	(v1)  	at (0,0) 	[label=below:{1}] 		{};
\node[vertex]	(v2)		[right of=v1, label=below:{$x$}]	{};
\node[vertex]	(v3)			[right of=v2, label=below:{1}]	{};
\node[vertex]	(v4)			[right of=v3, label=below:{1}]	{};
\node[vertex]	(v6)			[above of=v2, label=above:{1}]	{};
 
\draw [thick] (v1)--(v4);
\draw [thick] (v2)--(v6);
\draw[thick] (v1)--(v6);
\end{tikzpicture}
& \quad \quad \quad 
 G_1 
 \quad
\begin{tikzpicture}
[node distance=0.8cm, font=\small] 
\tikzstyle{vertex}=[circle, draw, fill, inner sep=0mm, minimum size=0.8ex]
\node[vertex]	(v1)  	at (0,0) 	[label=below:{1}] 		{};
\node[vertex]	(v2)		[right of=v1, label=below:{$x$}]	{};
\node[vertex]	(v3)			[right of=v2, label=below:{$1$}]	{};
\node[vertex]	(v4)			[right of=v3, label=below:{1}]	{};
\node[vertex]	(v5)			[above of=v3, label=above:{$y$}]	{};
\node[vertex]	(v6)			[above of=v2, label=above:{1}]	{};
 
\draw [thick] (v1)--(v4);
\draw [thick] (v3)--(v5);
\draw [thick] (v2)--(v6);
\draw[thick] (v1)--(v6)--(v5);
\end{tikzpicture}
\end{array}
$$

 Let now $w \in V_{G}(1)$. With this added assumption of semi-definiteness, 
 \fi
 Let $w \in {\mathbb Z}_{\geq 0}$.
 It is known  that
 there are only {\it finitely many} points
 $ (a_1,\dots ,a_n) \in {\mathbb Z}^n_{\geq 1}$ such that $M_G(a_1,\dots,a_n)$ is positive semi-definite and has determinant $w$. This statement is proved in \cite{L-N}, Theorem 1, when $w>0$ and the matrix is positive definite, and in \cite{Lor89}, 1.6, when $w=0$ and the matrix is positive semi-definite of rank $n-1$.
 
  Counting explicitly the number of distinct arithmetical structures on certain graphs is addressed for instance in \cite{bident}, \cite{BCCGGKMMV},  and \cite{C-V18}. Counting the number of solutions to $d_G(x_1,\dots,x_n)=1$ when $G$ is the path $A_n$ is found in \cite{L-N}.
 \end{remark}

Our next proposition shows that the existence of an arithmetical structure on $G$ implies
that infinitely many values in $V_{d_G}(1)$ are known explicitly. We denote by $\kappa$  the {\it number of spanning trees} of a graph $G$. 
 
 \begin{proposition} \label{pro.complexity} \label{pro.arithmeticalstructure}
 Let $G$ be a connected graph on $n$ vertices.

\begin{enumerate}[\rm (a)]
\item
Suppose that 
  every vertex in $G$ has degree at least $d$.
  Then $V_{d_G}(d)$ contains all positive multiples of $\kappa$. Moreover,
for each $\ell >0$, there exists a positive definite matrix $M_G(a_1,\dots, a_n)$ with $a_i \geq d$ such that
$d_G(a_1,\dots, a_n)= \ell \kappa$. 
\item
More generally, let $(M,R)$ be any arithmetical structure on $G$. Write  $^t\! R=(r_1,\dots,r_n)$ with $\gcd(r_1,\dots, r_n)=1$, $M=$ ${\rm Diag}(a_1,\dots,a_n)-A_G$,  and let $\Phi_M$ denote the associated group. Let $a_{min}$ denote the minimum of the integers $a_1,\dots, a_n$. 
Then $V_{d_G}(a_{min})$ contains every integer of the form $\ell|\Phi_M| r_i^2$ for any integer $\ell \geq 0$ and any $i=1,\dots, n$. 
\end{enumerate}
\end{proposition}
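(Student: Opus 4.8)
The plan is to establish the general statement (b) first, by a rank-one perturbation argument on the diagonal, and then to obtain (a) as the special case coming from the Laplacian. The only structural input I need is the adjoint formula of \ref{emp.classical}(b).

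Fix an arithmetical structure $(M,R)$ on $G$ with $M=\mathrm{Diag}(a_1,\dots,a_n)-A_G$ and ${}^t\!R=(r_1,\dots,r_n)$, and fix an index $i$. Since $M$ is positive semi-definite of rank $n-1$ with $MR=0$, property \ref{emp.classical}(b) gives $M^*=|\Phi_M|\,R\,({}^t\!R)$, whose $(i,i)$-entry is $(M^*)_{ii}=|\Phi_M|\,r_i^2$. For an integer $\ell\ge 0$ I would consider
\[
M_\ell:=M_G(a_1,\dots,a_i+\ell,\dots,a_n)=M+\ell\,e_i\,{}^t\!e_i,
\]
the matrix obtained from $M$ by adding $\ell$ to its $(i,i)$-entry only. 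Because the determinant is an affine function of the single entry in position $(i,i)$, with slope equal to the cofactor $(-1)^{i+i}\det(M_{ii})=(M^*)_{ii}$, and because $\det(M)=0$, I obtain
\[
\det(M_\ell)=\det(M)+\ell\,(M^*)_{ii}=\ell\,|\Phi_M|\,r_i^2 .
\]
It then remains to check the membership conditions: every diagonal entry of $M_\ell$ equals some $a_j\ge a_{min}$, or equals $a_i+\ell\ge a_{min}$, and the value $\ell\,|\Phi_M|\,r_i^2$ is non-negative, so it lies in $V_{d_G}(a_{min})$. This proves (b), the case $\ell=0$ simply returning the value $0$.

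For (a) I would apply (b) to the Laplacian arithmetical structure $(L,R)$, where $L=M_G(d_1,\dots,d_n)$ and ${}^t\!R=(1,\dots,1)$, so that $r_i=1$ for every $i$ and $|\Phi_L|=\kappa$. Here $a_{min}=\min_i d_i\ge d$ by hypothesis, whence $V_{d_G}(a_{min})\subseteq V_{d_G}(d)$; since (b) places every $\ell\kappa$ in $V_{d_G}(a_{min})$, the set $V_{d_G}(d)$ contains all positive multiples of $\kappa$. For the \emph{moreover} assertion, when $\ell>0$ the matrix $L_\ell$ is obtained from the semi-definite matrix $L$ by adding the nonzero non-negative diagonal vector $\ell\,e_i$, so \ref{emp.classical}(c) makes $L_\ell$ positive definite, while the computation above gives $\det(L_\ell)=\ell\kappa$.

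The computation is routine once \ref{emp.classical}(b) is in hand, so I do not expect a serious obstacle; the one point deserving care is the validity of the cofactor determinant identity at the singular matrix $M$ (where $\det M=0$). I would justify it either by multilinearity of the determinant in its $(i,i)$-entry, or by invoking the polynomial identity $\det(M+u\,{}^t\!v)=\det(M)+{}^t\!v\,M^*\,u$, which holds for every $M$ by continuity from the invertible case. The remaining work is simply the bookkeeping of the constraints $a_j\ge a_{min}$ together with the sign and positive-definiteness conditions.
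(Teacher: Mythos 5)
Your proposal is correct and takes essentially the same route as the paper: both arguments compute the determinant as an affine function of a single diagonal entry whose slope is the corresponding cofactor, identify that cofactor as $|\Phi_M|r_i^2$ via the adjoint formula of \ref{emp.classical}(b) (equivalently, as $\kappa$ for the Laplacian), note that the determinant vanishes at the arithmetical structure itself, and invoke \ref{emp.classical}(c) for positive definiteness. The only organizational difference is that you prove (b) first and obtain (a) by specializing to the Laplacian structure with $|\Phi_L|=\kappa$ and $r_i=1$, whereas the paper proves (a) directly from the matrix-tree theorem (principal minors of the Laplacian equal $\kappa$) and then repeats the computation for (b); the underlying calculation is identical.
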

\begin{proof}
(a) Recall that $M_G(d_1,d_2,\dots, d_n) $ is the Laplacian of $G$. 
It is well-known that the determinant of any principal submatrix of size $n-1$  of the Laplacian is equal to $\kappa$.
Consider the matrix $M_G(t,d_2,\dots, d_n) \in {\mathbb Z}[t]$. 
Its determinant is $\kappa(t -d_1)$. Indeed, it is clear that this determinant is a linear polynomial in $t$. The coefficient of $t$ is $\kappa$, and $t=d_1$ must be a root of the polynomial.

 For every value $\ell+d_1 > d_1 \geq d$, we have $M_G(\ell+d_1,d_2,\dots, d_n) $
positive definite since  $M_G(d_1,d_2,\dots, d_n) $ is positive semi-definite of rank $n-1$ (\ref{emp.classical} (c)).

(b) Recall that for an arithmetical structure $(M,R)$, we have 
$M^*=|\Phi|R(^t\!R)$ (\ref{emp.classical} (b)).  Consider the matrix $M_G(t,a_2,\dots,a_n)$. 
Its determinant is $|\Phi_M|r_1^2(t-a_1)$. For every value $\ell+a_1 > a_1 \geq a_{min}$, we have $M_G(a_1+\ell,a_2,\dots, a_n) $
positive definite  of determinant $|\Phi_M|r_1^2\ell$.
\end{proof}

\begin{remark} \label{rem.ComplementInfinite} Let $M_G(a_1,\dots, a_n)$ be a positive definite matrix  with $a_i \geq 2$ such that
$d_G(a_1,\dots, a_n)= \ell \kappa$, as in Proposition \ref{pro.arithmeticalstructure} (a) with $d=2$. It is not always possible to find such matrix such that its associated group $\Phi_{M_G}$ is cyclic. Indeed, in the case of the cycle ${\mathcal C}_2$ on $n=2$ vertices, which has $\kappa=2$, the matrix
$$M_{{\mathcal C}_2}(x,y)= \left( \begin{array}{cc}
x & -2 \\
-2 & y
\end{array}
\right)
$$
has determinant $xy-4$. When $xy-4=2\ell$, and $2\ell+4$ is a power of $2$ and  $x,y \geq 2$,
this equation  has only solutions  with both $x$ and $y$ even.
In this case, $\ell$ is even, and the associated group $\Phi={\mathbb Z}/2{\mathbb Z} \times {\mathbb Z}/\ell{\mathbb Z}$ is not cyclic.

In particular, $G={\mathcal C}_2$ is an example where 
the complement of $V_G(2)$ in $V_{d_G}(2)$ is infinite, since this complement contains every integer of the form $2\ell$ with $\ell=2^m-2$.

\if false
$$ \left( \begin{array}{cc}
\ell+2 & -2 \\
-2 & 2
\end{array}
\right)
$$
has determinant $2\ell$. When $\ell$ is even, the associated group $\Phi={\mathbb Z}/2{\mathbb Z} \times {\mathbb Z}/\ell{\mathbb Z}$ is not cyclic.
\fi
\end{remark}

\begin{corollary} Let $G$ be a tree. Then $V_{d_G}(1) ={\mathbb Z}_{\geq 0}$.
\end{corollary}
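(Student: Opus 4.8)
The plan is to obtain the corollary directly from Proposition~\ref{pro.arithmeticalstructure}(a), exploiting the two elementary features of trees: a tree has exactly one spanning tree, and a nontrivial tree has vertices of degree~$1$.

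First I would record that for a tree $G$ on $n \geq 2$ vertices the number of spanning trees is $\kappa = 1$, since the only spanning subtree of a tree is $G$ itself. As $G$ is connected on $n \geq 2$ vertices, every vertex has degree at least $1$, so the hypothesis of Proposition~\ref{pro.arithmeticalstructure}(a) is satisfied with $d = 1$. That proposition then yields that $V_{d_G}(1)$ contains every positive multiple of $\kappa = 1$, i.e. $\mathbb{Z}_{>0} \subseteq V_{d_G}(1)$. Concretely, for each $\ell > 0$ the matrix $M_G(\ell + d_1, d_2, \dots, d_n)$ is positive definite of determinant $\ell \kappa = \ell$, so every positive integer is realized.

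It then remains only to capture the value $0$. Here I would invoke the Laplacian $L = M_G(d_1, \dots, d_n)$: its columns sum to zero, so $\det L = 0$, while all of its diagonal entries $d_i$ satisfy $d_i \geq 1$. Hence $0 \in V_{d_G}(1)$. Combining the two steps gives $\mathbb{Z}_{\geq 0} \subseteq V_{d_G}(1)$, and the reverse inclusion is immediate from the definition of $V_{d_G}(1)$ as a set of non-negative values. Therefore $V_{d_G}(1) = \mathbb{Z}_{\geq 0}$.

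I do not expect a genuine obstacle once Proposition~\ref{pro.arithmeticalstructure}(a) is available; the only two points needing care are that for trees one is forced to take $d = 1$ rather than $d = 2$, because leaves prevent any uniform lower bound larger than $1$ on the degrees (so this route says nothing about $V_{d_G}(2)$), and that Proposition~\ref{pro.arithmeticalstructure}(a) produces only \emph{positive} multiples of $\kappa$, so the value $0$ must be supplied separately by the degenerate Laplacian evaluation. One should also tacitly exclude the single-vertex tree, for which $d_G = x_1$ and $0 \notin V_{d_G}(1)$; the statement is understood for $n \geq 2$. As a self-contained alternative to Proposition~\ref{pro.arithmeticalstructure}(a), one could instead strip a leaf $v_n$ and expand $\det M_G$ along its column to get the recursion $\det M_G = x_n \det M_{G'} + \det(M_G)_{n-1,n}$ with $G' = G - v_n$, and induct on $n$; but the proposition makes this unnecessary.
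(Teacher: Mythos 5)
Your proof is correct and is essentially the paper's own argument: the paper likewise deduces the result immediately from Proposition~\ref{pro.arithmeticalstructure}(a) with $\kappa=1$ for a tree, with the value $0$ supplied by the Laplacian (a fact the paper treats as known from its introduction). Your additional remarks on the leaf-forced choice $d=1$ and the trivial single-vertex exception are sound but do not change the route.
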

\begin{proof} The corollary follows immediately from Proposition \ref{pro.complexity} (a) since 
$\kappa=1$ when $G$ is a tree. 
\end{proof}

The following lemma is needed in the proof of our next proposition.
\begin{lemma} \label{lem.matrix}
Let $N$ denote a $n-1 \times n-1$ square 
matrix with coefficients in a commutative ring $A$. 
Let $M$ denote the following $n\times n$ matrix in $A[t,t_2,\dots,t_n]$:
$$M = \left( \begin{array}{cccc}
t   & t_2 & \cdots & t_n \\
t_2 &     &       &   \\
\vdots & & N & \\
t_n & & & 
\end{array} 
\right)
.$$
Let $^t  T:=(t_2,\dots, t_n)$.
Then 
$$\det(M)= \det(N)t - (^t  T) (N^*) T.$$ 
\end{lemma}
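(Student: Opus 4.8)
The plan is to exploit that the indeterminate $t$ occurs only in the $(1,1)$ entry of $M$, so $\det(M)$ is affine-linear in $t$: we may write $\det(M)=\alpha t+\beta$ with $\alpha,\beta\in A[t_2,\dots,t_n]$. First I would identify the coefficient $\alpha$. Expanding $\det(M)$ along the first column (equivalently the first row), the only contribution involving $t$ comes from the $(1,1)$ cofactor, and the associated minor is exactly $N$; hence $\alpha=\det(N)$. It then remains to show that the constant term is $\beta=-({}^tT)(N^*)T$.

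Second, I would compute $\beta$ by setting $t=0$, so that $\beta=\det(M_0)$, where $M_0$ is $M$ with its $(1,1)$ entry replaced by $0$. I would evaluate $\det(M_0)$ by a double cofactor expansion. Expanding along the first row, whose entries are $(0,t_2,\dots,t_n)$, only the columns $j=2,\dots,n$ contribute, each giving a sign $(-1)^{1+j}$, the factor $t_j$, and the determinant of the matrix $A_j$ obtained by deleting row $1$ and column $j$. The first column of each $A_j$ is $T={}^t(t_2,\dots,t_n)$, and its remaining columns are those of $N$ with column $j-1$ deleted. Expanding $\det(A_j)$ along this first column $T$ produces, for each row index $i$, the minor of $N$ obtained by deleting row $i$ and column $j-1$. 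Collecting the two sets of signs and substituting $k=j-1$ yields
\[
\det(M_0)=-\sum_{i,k=1}^{n-1}(-1)^{i+k}\det(N_{ik})\,t_{i+1}t_{k+1}.
\]
By the definition $(N^*)_{ki}=(-1)^{k+i}\det(N_{ik})$ recalled at the start of the section, the factor $(-1)^{i+k}\det(N_{ik})$ is precisely $(N^*)_{ki}$, so the sum equals $-\sum_{i,k}(N^*)_{ki}\,t_{k+1}t_{i+1}=-({}^tT)(N^*)T$. Combined with $\alpha=\det(N)$, this gives the claimed identity.

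The only delicate point is the sign and index bookkeeping: deleting the first row and first column shifts the indices of $N$ by one, so $t_j$ must be paired with column $j-1$ of $N$, and one has to check that the two cofactor signs $(-1)^{1+j}$ and $(-1)^{i+1}$, after the substitution $k=j-1$, combine into exactly the adjoint sign $(-1)^{i+k}$ with the correctly transposed indices demanded by $(N^*)_{ij}=(-1)^{i+j}\det(N_{ji})$. Once this is done the identification with ${}^tT\,N^*\,T$ is immediate.

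If one prefers to avoid the bookkeeping, there is a universal-coefficient shortcut: regard $t$, the $t_j$, and the entries of $N$ as independent indeterminates over $\mathbb{Z}$, so that the ambient ring is a domain and $\det(N)$ is invertible in its fraction field. The classical Schur-complement identity then gives $\det(M)=\det(N)\,\bigl(t-{}^tT\,N^{-1}T\bigr)$, and multiplying $\det(N)$ into the quadratic form together with $N^*=\det(N)N^{-1}$ turns this into $\det(N)\,t-{}^tT\,N^*\,T$. Since both sides are polynomials in the entries with integer coefficients, the identity is a formal polynomial identity and hence holds over an arbitrary commutative ring $A$ by specialization.
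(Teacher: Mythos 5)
Your proof is correct and follows essentially the same route as the paper's: the paper simply expands $\det(M)$ along the first row and matches the resulting cofactors with the entries of $N^*$ via $(N^*)_{ij}=(-1)^{i+j}\det(N_{ji})$, which is exactly the sign-and-index bookkeeping you carry out (your separation into the coefficient of $t$ and the constant term at $t=0$ is just a reorganization of that same expansion). Your alternative Schur-complement argument with universal coefficients is also valid, but it is an optional shortcut rather than the route the paper takes.
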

\begin{proof}
Recall that by definition, $(N^*)_{ij} = (-1)^{i+j}\det(N_{ji})$.
The lemma follows by expanding $\det(M)$ using the first row of $M$.
\end{proof}

 Given a vertex $v$ of $G$, let $G_v$ denote the subgraph of $G$ obtained  by removing from $G$
the vertex $v$ and all the edges attached to $v$.

\begin{proposition} \label{pro.induction}
Let $G$ be a connected graph. Let $v$ be a vertex such that 
$G_v$ is connected and $1 \in V_{G_v}(r)$. Then 
\begin{enumerate}[\rm (a)]
\item $V_G(r) \supseteq {\mathbb Z}_{>0}$ when $r=1$ or $2$. 
In general, $V_G(r) \supseteq {\mathbb Z}_{\geq r-1}$.
\item There exists on $G$ an arithmetical structure such that the associated group $\Phi$ is trivial and, hence, cyclic. In particular, $0 \in V_G(1)$.
\item If the degree of $v$ is at least $2$ and $1 \in V_{G_v}(2)$, then $0 \in V_G(2)$. More precisely, there exists 
on $G$ an arithmetical structure $(M,R)$ with 
$M=M_G(a_1,\dots, a_n)$ and $a_1,\dots, a_n \geq 2$ such that the associated group $\Phi_M$ is trivial.
\end{enumerate}
\end{proposition}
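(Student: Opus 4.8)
The plan is to reduce the whole proposition to choosing a single diagonal entry in a one–parameter family of matrices. Reorder the vertices so that $v=v_1$. The hypothesis $1\in V_{G_v}(r)$ furnishes integers $a_2,\dots,a_n\ge r$ for which $N:=M_{G_v}(a_2,\dots,a_n)$ is positive definite with $\det(N)=1$. Writing $M_G(t,a_2,\dots,a_n)$ in the block shape of Lemma \ref{lem.matrix}, with off–diagonal column ${}^tT=(t_2,\dots,t_n)$ recording the negatives of the numbers of edges from $v$, that lemma gives $\det M_G(t,a_2,\dots,a_n)=\det(N)\,t-({}^tT)N^{*}T=t-c$, where $c:=({}^tT)N^{-1}T$ (using $N^{*}=N^{-1}$ since $\det N=1$). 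Because $G$ is connected, $T\ne 0$, and because $N^{-1}=N^{*}$ has integer entries and is positive definite, $c$ is a positive integer. Thus the determinant of the family is the linear function $t\mapsto t-c$, and the entire proposition becomes bookkeeping on the choice of $t$.

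The two structural facts I need about $M:=M_G(t,a_2,\dots,a_n)$ are its definiteness and the structure of $\Phi_M$. For definiteness I would complete the square (equivalently, use the Schur complement of $N$): since $N$ is positive definite, one checks that ${}^tX\,M\,X\ge (t-c)x_1^2$ with equality forcing $X=0$ when $t>c$, so $M$ is positive definite precisely when $t-c>0$ and positive semi-definite precisely when $t-c=0$; in the latter case its rank is exactly $n-1$, since $N$ is a principal submatrix of rank $n-1$ while $\det M=0$. For $\Phi_M$ I would exploit that $N\in\GL_{n-1}(\ZZ)$: right–multiplying $M$ by ${\rm Diag}(1,N^{-1})\in\GL_n(\ZZ)$ and then clearing the first row and first column by integer row/column operations reduces $M$ to ${\rm Diag}(t-c,1,\dots,1)$. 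Hence $\ZZ^n/\im(M)\cong\ZZ/(t-c)\ZZ$, so $\Phi_M$ is cyclic for every admissible $t$, and is trivial exactly when $t=c$.

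With these in hand the first two parts follow immediately. For (a), given $u>0$ set $t=u+c$: then $M$ is positive definite of determinant $u$ with $\Phi_M$ cyclic, and the only constraint is $t=u+c\ge r$; since $c\ge 1$ this holds whenever $u\ge r-1$, giving $V_G(r)\supseteq\ZZ_{\ge\max(1,\,r-1)}$, hence $\ZZ_{>0}$ for $r=1,2$; together with $0\in V_G(1)$ from part (b) this yields the stated inclusion $V_G(r)\supseteq\ZZ_{\ge r-1}$. For (b), set $t=c$ (legal since $c\ge1$): then $M$ is positive semi-definite of rank $n-1$ with determinant $0$ and, by the reduction above, $\Phi_M$ trivial; \ref{emp.classical}(b) supplies the unique primitive $R>0$ with $MR=0$, so $(M,R)$ is an arithmetical structure with trivial group and $0\in V_G(1)$.

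Part (c) is where the real work lies, namely upgrading $c\ge 1$ to $c\ge 2$ so that $a_1=t=c\ge 2$. Here I would use that, with $a_2,\dots,a_n\ge 2$ and $N$ positive definite, $N^{-1}=N^{*}$ is an \emph{entrywise positive} integer matrix by \ref{emp.classical}(a); in particular each $(N^{-1})_{ii}\ge1$ and each $(N^{-1})_{ij}\ge0$. Writing $T=-W$ with $W\ge 0$ and $\sum_i W_i=\deg(v)\ge 2$, the quantity $c=({}^tW)N^{-1}W$ is a sum of non-negative terms bounded below by $2$: if a single coordinate $W_k$ is nonzero it equals $\deg(v)\ge 2$, whence $c\ge W_k^{2}(N^{-1})_{kk}\ge 4$, while if at least two coordinates $W_k,W_\ell$ are nonzero then $c\ge (N^{-1})_{kk}+(N^{-1})_{\ell\ell}\ge 2$. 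Thus $c\ge 2$, and taking $t=c$ as in (b) produces an arithmetical structure $M=M_G(c,a_2,\dots,a_n)$ with all diagonal entries $\ge 2$ and trivial $\Phi_M$, so $0\in V_G(2)$. The main obstacle throughout is that everything hinges on $\det(N)=1$: this is exactly what makes $N^{*}=N^{-1}$ integral, and hence makes both the Smith–normal–form reduction and the integrality estimate $c\ge 2$ go through.
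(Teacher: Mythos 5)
Your proof is correct and follows essentially the same route as the paper's: Lemma \ref{lem.matrix} together with the positivity of $N^{*}$ reduces everything to the one-parameter family $\det M_G(t,a_2,\dots,a_n)=t-c$ with $c={}^tT N^{*}T\geq 1$, where $t>c$ gives (a), $t=c$ gives (b), and the degree-at-least-$2$ estimate $c\geq 2$ gives (c), exactly as in the paper. The only cosmetic differences are that you obtain ${\mathbb Z}^n/{\rm Im}(M)\equiv {\mathbb Z}/(t-c){\mathbb Z}$ by an explicit Smith-normal-form reduction and verify (semi-)definiteness by a Schur-complement computation, where the paper instead cites the gcd-of-$(n-1)$-minors criterion, the identity $M^{*}=|\Phi_M|R({}^t\!R)$ from \ref{emp.classical}(b), and an explicit kernel vector $R=(1,{}^t(N^{*}S))$.
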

\begin{proof}
Without loss of generality, we may assume that $v=v_1$.
By hypothesis we can find $a_2, \dots, a_n \geq r$
such that the matrix $N:=M_{G_v}(a_2,\dots, a_n)$ has determinant $1$
and  is positive definite. 
Consider then the determinant
of the matrix $M_G(t,a_2,\dots, a_n)$, which has $
N$
in its lower right corner. 
Since $N$ is positive definite by hypothesis, we find that
$N^*$ is a positive matrix (use \ref{emp.classical} (a)). Hence,  Lemma \ref{lem.matrix} applied to $M_G(t,a_2,\dots, a_n)$
shows that  $$\det M_G(t,a_2,\dots, a_n)= t-a$$ with $a>0$.

(a) 
Let $a_1 > a$. Since  
$N$ is positive definite and $\det M_G(a_1,a_2,\dots, a_n)>0$, we find that
the matrix  $M_G(a_1,a_2,\dots, a_n)$ is  positive  definite.
In addition, its associated group $\Phi$ is cyclic. Indeed, 
$M_G(a_1,a_2,\dots, a_n)$ contains the square submatrix $N$
of size $n-1 \times n-1$, which has determinant $1$. 
It is well-known that the group $\Phi$ is cyclic if and only if 
the greatest common divisor of all the minors of size $n-1$ is equal to $1$.

Suppose  that $r \geq a+1$.
 Then $V_G(r) \supseteq {\mathbb Z}_{\geq r-a}$.
Suppose now that $r \leq a+1$. 
Then  $V_G(r) \supseteq {\mathbb Z}_{\geq 1}$.  This proves (a).

(b) Consider now the matrix $M:=M_G(a,a_2,\dots, a_n)$, of determinant $0$. 
We claim that this matrix is positive semi-definite. We prove this claim by exhibiting a positive vector $R \in {\mathbb Z}^n$, of the form $^t\! R=(1, ^t \! R_0)$,
with $MR=0$. Recall that the matrix $M$ has the form 
$$M = \left( \begin{array}{cccc}
a   & -s_2 & \cdots & -s_n \\
-s_2 &     &       &   \\
\vdots & & N & \\
-s_n & & & 
\end{array} 
\right)
,$$
for some non-negative integers $s_2,\dots,s_n$.
Since the graph $G$ is connected, one at least of the integers $s_2,\dots,s_n$ must be positive.
Write $^t\!S=(s_2,\dots, s_n)$.  
Since $\det(N)=1$, we can find an integer vector $R_0$ such that $-S+NR_0=0$. Since $S$ is a non-negative vector and $N^*$ is a positive matrix, we find that $R_0=N^*S$ is a positive vector. Lemma \ref{lem.matrix}
shows that $\det(N)a=(^t \! S) N^* S$. It follows that $MR=0$, as desired. Now suppose that $X \in {\mathbb Z}^n$. To show that $^tXMX \geq 0$, 
we note that it is always possible to write $X=X_0+\alpha R$, with 
$X_0\in {\mathbb Z}^n$ of the form $^t\!X_0=(0,^t\!X_1)$ and $\alpha \in {\mathbb Z}$. Therefore, $^t\!XMX = \ ^t\!X_1NX_1 \geq 0$ since $N$ is positive definite by construction.

Since $\det(N)=1$ and the first coefficient of $R$ is $1$, we find that the group $\Phi_M$ associated with $M$ is trivial (use \ref{emp.classical} (b)) and, hence, cyclic. This shows that $0 \in V_G(1)$.

(c) Consider again the structure $(M,R)$ introduced in (b). It is clear that when $1 \in V_{G_v}(2)$ and 
$a \geq 2$, then $0 \in V_G(2)$. The integer $a$ is obtained as $a=(^t \! S) N^* S$. The hypothesis that the degree of $v$ is at least $2$ implies that 
if $v$ is linked to only one vertex $w$ of $G_v$, then the number of edges between $v$ and $w$ is at least $2$. 
Thus, since the matrix $N^*$ is positive, we must have 
$a=(^t \! S) N^* S \geq 2$.
\end{proof}

\begin{emp} \label{proof.{thm.main1}}
{\it Proof of Theorem  \ref{thm.main1}.}

In Part (a), the graph $G$ contains a connected induced subgraph $H$ on two vertices linked by $a \geq 1$ edges. The polynomial $d_H(x,y)=xy-a^2$ takes all non-negative values when $x,y \geq 1$. In particular, the value $1$ is taken with $x=a^2+1$ and $y=1$ and so, $1 \in V_H(1)$.

Let now $H$ be any connected induced subgraph of $G$.
Let $w_1,\dots, w_k$ denote the vertices of $G$ that do not belong to $H$. 
Since $G$ is connected, at least one of the vertices $w_1,\dots, w_k$ 
is connected by an edge with a vertex of $H$. Up to renumbering
$w_1,\dots, w_k$, we may assume that $w_1$ is connected by an edge with a vertex of $H$. Let $H_1$ denote the connected induced subgraph of $G$ on the vertices of $H$ and $w_1$. Repeating this procedure, we can assume that we have a chain of connected induced subgraphs
$H \subset H_1 \subset \dots \subset H_k=G$, where 
$H_j$ is the (connected) induced subgraph of $G$ on the vertices of $H_{j-1}$ and $w_j$.

Assume that $1 \in V_H(r)$, with $r =1$ or $2$.  
Then Proposition \ref{pro.induction} (a) can be applied successively to each pair $H \subset H_1$, $H_1 \subset H_2$, $\dots$, $H_{k-2} \subset H_{k-1}$ to show that $1 \in V_{H_{k-1}}(r)$.

When $1 \in V_{H_{k-1}}(1)$, we  use Proposition \ref{pro.induction} (a) and (b) to conclude that $  V_{H_{k}}(1)= {\mathbb Z}_{\geq 0}$. This proves 
Theorem  \ref{thm.main1} (a).

When $1 \in V_{H_{k-1}}(2)$, we  use Proposition \ref{pro.induction} (a)  to conclude that 
$  V_{H_{k}}(2)\supseteq {\mathbb Z}_{> 0}$. 
We then use Proposition \ref{pro.induction} (c)
to conclude the proof of
Theorem  \ref{thm.main1} (b).
\qed
\end{emp}

\begin{corollary} \label{pro.trivial} Let $G$ be a connected graph. Then there exists on $G$ an arithmetical structure $(M,R)$ such that its associated group $\Phi_M$ is trivial.
\end{corollary}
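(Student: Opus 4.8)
The plan is to obtain the statement as an essentially immediate consequence of Proposition~\ref{pro.induction}(b), which already produces an arithmetical structure whose associated group is trivial. Thus the only real task is to locate a vertex of $G$ to which that proposition applies, namely a vertex $v$ with $G_v$ connected and $1 \in V_{G_v}(1)$.

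First I would settle the existence of such a vertex. Assuming $G$ has $n \geq 2$ vertices (the one-vertex graph being degenerate for the definition of an arithmetical structure), I would pick a spanning tree $T$ of $G$ and take $v$ to be a leaf of $T$. Removing a leaf from a tree again yields a tree, so $T - v$ is a spanning tree of $G_v := G \setminus v$; in particular $G_v$ is connected. This is the standard device guaranteeing a non-cut vertex, and it is the only graph-theoretic input required.

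Next I would check the hypothesis $1 \in V_{G_v}(1)$. Since $G_v$ is connected, Theorem~\ref{thm.main1}(a) gives $V_{G_v}(1) = {\mathbb Z}_{\geq 0}$, and in particular $1 \in V_{G_v}(1)$. (If $G_v$ is a single vertex, this is seen directly: $M_{G_v}(1) = (1)$ is positive definite of determinant $1$, with trivial, hence cyclic, associated group.) Applying Proposition~\ref{pro.induction}(b) to $G$ and the vertex $v$ with $r = 1$ then yields on $G$ an arithmetical structure $(M,R)$ whose group $\Phi_M$ is trivial, which is exactly the assertion.

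I expect no genuine obstacle here: the construction is already contained in the proof of Theorem~\ref{thm.main1}(a), whose final step invokes Proposition~\ref{pro.induction}(b) precisely to build such a trivial structure. The only points needing (routine) attention are the spanning-tree choice of $v$ and the single-vertex base case, both dispatched above.
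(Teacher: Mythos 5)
Your proof is correct and takes essentially the same approach as the paper: both arguments reduce the corollary to Proposition \ref{pro.induction}(b) applied to a vertex $v$ with $G_v$ connected and $1 \in V_{G_v}(1)$. The paper extracts such a vertex from the chain $H \subset H_1 \subset \dots \subset H_k = G$ already built in the proof of Theorem \ref{thm.main1}(a), whereas you produce one as a spanning-tree leaf and invoke the statement of Theorem \ref{thm.main1}(a) on $G_v$ (handling the single-vertex $G_v$ directly); this is only a cosmetic difference.
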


\begin{proof} In the proof of Theorem  \ref{thm.main1} (a) above in \ref{proof.{thm.main1}}, we find that the graph $G$ is such that  $1 \in V_{H_{k-1}}(1)$.
 Proposition \ref{pro.induction} (b)  applied to $H_{k-1} \subset G$ immediately implies the corollary.
\end{proof}

\begin{remark} While $0 \in V_G(1)$, it may happen that $0 \notin V_G(2)$. Indeed, 
$0 \notin V_{d_G}(2)$ when $G$ is a Dynkin diagram (Proposition \ref{pro.Dynkin}). Three additional such examples (the extended cycles ${\mathcal C}_n^+$ for $n=2,3,5$) are found in Remark \ref{rem.0notinV}, and it would be interesting to classify the graphs where  $0 \notin V_{d_G}(2)$.

It is also possible to have $0 \in V_{d_G}(2)$ but $0 \notin V_{G}(2)$. Indeed, this happens for instance for the extended Dynkin diagram $\tilde{D_n}$ with $n$ even (\ref{def.Dynkin}): The graph $\tilde{D_n}$ has only  one arithmetical structure, with matrix $M_{\tilde{D_n}}(2,\dots, 2)$,
and its associated group $\Phi$ is ${\mathbb Z}/2{\mathbb Z} \times {\mathbb Z}/2{\mathbb Z} $ when $n$ is even. 

One may wonder whether it is possible to have $0 \in V_{G}(2)$, but no arithmetical structure $M_G(a_1,\dots,a_n)$
with $a_1,\dots, a_n \geq 2$ such that the associated group $\Phi $ is trivial. One such example might be ${\mathcal C}_6^+$ (see Remark \ref{rem.0notinV}). Another example might be the bipartite graph ${\mathcal K}(4,4)$.
\end{remark}

\begin{remark} Modified appropriately, the proof of Theorem \ref{thm.main1} does produce some information on the set $V_G(3)$ when $1 \in V_H(3)$. We do not investigate the properties of the sets $V_G(r)$ any further in this article when $r\geq 3$.  
\end{remark} 
\end{section}

\begin{section}{Second Main Theorem}

Let us denote by ${\mathcal C}_n^+$ the graph on $n+1$ vertices obtained by attaching a single vertex
using a single edge to the cycle ${\mathcal C}_n$ on $n$ vertices. Such graph is sometimes called a {\it pan}, and  is a type of {\it tadpole} graph.

\begin{example}  \label{ex.lollipop}
Let $G={\mathcal C}_n^+$. 
 In the table below, 
we provide a set $L$ which computations indicate contains the complement of $V_{d_G}(2)$ in ${\mathbb Z}_{\geq 0}$.

\begin{center}
    \begin{tabular}{|c|l|}  
      \hline
      $n$ & $L$ \\
      \hline
      $2$ & [0] (see 
      \ref{rempro.G(a,b)}, graph $A_3(2,1)$) \\
      \hline
      $3$ & [0, 2, 14, 20, 26, 38, 44, 68, 254] \\
      \hline
      $4$ & [2, 3, 7, 10, 19, 39, 79, 154 ] \\
      \hline
      $5$ & [0, 2, 8, 12, 18 ] \\
      \hline
      $6$ & empty \\
      \hline
      $7$ & [6, 66, 94 ]  \\
      \hline    
    \end{tabular}
  \end{center}
  
  \smallskip
  We note here the fact that $1 \in V_G(2)$. Indeed, with the choice of labeling used below, we find that 
$\det M_G(t,2,\dots,2)=(n+1)t-(3n+2)$:
$$ 
\begin{tikzpicture}
[node distance=0.8cm, font=\small] 
\tikzstyle{vertex}=[circle, draw, fill, inner sep=0mm, minimum size=0.7ex]
\node[vertex]	(v1)  	at (0,0) 	[label=above:{$t$}] 		{};
\node[vertex]	(v22)		[right of=v1, label=above:{$2$}]	{};
\node[vertex]	(v33)			[right of=v22, label=above:{$2$}]	{};
\node[vertex]	(w1)			[below of=v1,label=below:{$2$}]	{};
\node[vertex]	(w2)			[right of=w1, label=below:{$2$}]	{};
 
\draw [thick] (v1)--(v22)--(v33) ;
\draw [thick] (v1)--(w1);
\draw[thick, dotted] (w1)--(w2);
\draw[thick, dotted] (v22)--(w2);
\end{tikzpicture}
$$
Thus, choosing $t=3$ produces $1 \in V_{d_G}(2)$ when $G={\mathcal C}_n^+$. To show that $1 \in V_G(2)$, it suffices to note that 
$M_G(3,2,\dots, 2)$ is positive definite. This follows from the fact that removing the first row and first column of the matrix $M_G$ leaves a submatrix equal to the Dynkin diagram $A_n$. This submatrix is known to be positive definite. Hence, Silvester's criterion shows that $M_G(3,2,\dots, 2)$ is positive definite. 
  \end{example}
  \begin{proposition} \label{cor.tadpole}
   Let $G={\mathcal C}_n^+$. Then $1 \in V_G(2)$, and if $n \geq 8$, then 
   $V_G(2) = {\mathbb Z}_{\geq 0}$. 
  \end{proposition}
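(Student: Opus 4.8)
The plan is to handle the two assertions separately, reusing the computation already performed in Example \ref{ex.lollipop} for the first and invoking Theorem \ref{thm.main1}(b) for the second.

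For the claim $1 \in V_G(2)$ there is nothing new to do: Example \ref{ex.lollipop} exhibits the labeling for which $\det M_G(t,2,\dots,2)=(n+1)t-(3n+2)$, so $t=3$ gives determinant $1$, and $M_G(3,2,\dots,2)$ is positive definite (deleting the first row and column leaves the positive-definite $A_n$-matrix, and the total determinant is positive). I would simply cite this.

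For the implication $n\ge 8\Rightarrow V_G(2)=\mathbb{Z}_{\ge 0}$, the strategy is to produce a single induced connected subgraph $H\subset G$ with $1\in V_H(2)$ such that $G$ is recovered from $H$ by adjoining exactly one vertex of degree at least $2$, and then apply Theorem \ref{thm.main1}(b)(ii). Write the cycle as $u_1-u_2-\cdots-u_n-u_1$ with the pendant $w$ attached at $u_1$, and set $H:=G\setminus\{u_4\}$. Deleting one cycle vertex leaves the path $u_5-\cdots-u_n-u_1-u_2-u_3$ carrying the pendant $w$ at $u_1$, so $H$ is the \emph{starlike tree} (spider) with center $u_1$ and three legs of lengths $1$, $2$, and $n-4$. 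Since $n\ge 8$ forces $n-4\ge 4$, this tree contains the Dynkin diagram $E_8$ (the spider with legs $1,2,4$) as an induced subgraph and has diameter $n-2\ge 4$; hence $1\in V_H(2)$ by \cite{B-J} (for $n=8$ one has $H=E_8$ and $1\in V_{E_8}(2)$ directly). To finish I would note that $G$ is obtained from $H$ by adjoining $u_4$, whose neighbours in $G$ are $u_3$ and $u_5$, both already present in $H$, so $u_4$ has degree $2$. The one-term chain $H=H_1\subset H_2=G$ then meets the hypotheses of Theorem \ref{thm.main1}(b)(ii), and $V_G(2)=\mathbb{Z}_{\ge 0}$ follows.

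The only genuinely nontrivial input is $1\in V_H(2)$ for the spider $H$: this is the hard, tree-specific fact, and it is exactly where I expect to lean on the external classification \cite{B-J} rather than on the self-contained machinery of this paper. Everything else is routine bookkeeping: connectedness of $H$ (removing one vertex from a cycle of length $\ge 3$ with a pendant still gives a tree), the $E_8$ containment (truncate the long leg to length $4$; since $H$ is a tree there are no extra chords), and the degree count for the single reattached vertex. The one edge case worth a sanity check is $n=8,9$, to be sure $u_3,u_4,u_5$ are distinct and that the two cycle-legs of $H$ neither overlap nor create an unwanted edge; both are immediate for $n\ge 8$, since $u_3$ and $u_{n-3}$ are non-adjacent on $\mathcal{C}_n$ precisely when $n\ne 7$.
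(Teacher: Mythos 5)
Your proof is correct, but for $n\geq 8$ it takes a genuinely different route from the paper's, and the difference is worth recording. The paper also starts from $1\in V_{E_8}(2)$, but it reconstructs ${\mathcal C}_n^+$ from $E_8$ by first lengthening the long leg one vertex at a time (these added vertices have degree $1$) and only then closing the cycle with a degree-$2$ vertex; because that chain contains degree-$1$ additions, Theorem \ref{thm.main1}(b)(ii) is unavailable, part (b)(i) yields only $V_G(2)\supseteq {\mathbb Z}_{>0}$, and the paper must prove $0\in V_G(2)$ by a separate explicit construction (Lemma \ref{ex.arith.struct.cycle+}, an arithmetical structure on ${\mathcal C}_n^+$ with cyclic group of order $2k+5$). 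Your choice of $H=G\setminus\{u_4\}$ --- deleting a degree-$2$ cycle vertex so that $G$ is recovered from the spider $H$ by a single degree-$2$ addition --- lets Theorem \ref{thm.main1}(b)(ii) deliver $V_G(2)={\mathbb Z}_{\geq 0}$, including $0$, in one stroke, so Lemma \ref{ex.arith.struct.cycle+} becomes unnecessary for this proposition (it retains independent interest, and it is what covers the value $0$ for $n=7$ in the paper). The trade-off is your appeal to \cite{B-J}, Corollary 11, for $1\in V_H(2)$ when $n\geq 9$: this is legitimate, since it is exactly how the paper itself paraphrases that external result, but it is avoidable, and avoiding it would make your argument strictly self-contained --- since $H$ is a tree, truncating its long leg exhibits $E_8$ as an induced connected subgraph of $H$, and Theorem \ref{thm.main1}(b)(i) applied to $E_8\subset H$ already gives $1\in V_H(2)$. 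Two trivialities: your parenthetical claim that ``$u_3$ and $u_{n-3}$ are non-adjacent on ${\mathcal C}_n$ precisely when $n\neq 7$'' is garbled (the relevant leg-ends are $u_3$ and $u_5$, non-adjacent in ${\mathcal C}_n$ for every $n\geq 4$), but nothing depends on it; and your first claim ($1\in V_G(2)$ via Example \ref{ex.lollipop}) is exactly the paper's own argument.
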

\begin{proof} We showed  in Example \ref {ex.lollipop} that $1 \in V_G(2)$ when $n \leq 7 $.
The tree $E_8$ 
$$
\begin{tikzpicture}
[node distance=0.8cm, font=\small] 
\tikzstyle{vertex}=[circle, draw, fill, inner sep=0mm, minimum size=0.7ex]
\node[]	(v1)  	at (0,0) 	[label=below:{}] 		{};
\node[vertex]	(v22)		[right of=v1, label=above:{$v$}]	{};
\node[vertex]	(v33)			[right of=v22, label=below:{}]	{};
 
\node[vertex]	(v44)			[right of=v33, label=below:{}]	{};
\node[vertex]	(v2)			[right of=v44, label=below:{}]	{};
\node[vertex]	(v3)			[right of=v2, label=below:{}]	{};
\node[vertex]	(v)			[above of=v3,  label=right:{}]	{};
\node[vertex]	(v4)			[right of=v3, label=below:{}]	{};
\node[vertex]	(v5)			[right of=v4, label=above:{$w$}]	{};
\draw [thick] (v22)--(v33)--(v44)--(v4);
\draw [thick] (v)--(v3);
\draw[thick] (v4)--(v5);
\end{tikzpicture}
$$
has $1 \in V_{E_8}(2)$ (see \ref{pro.Dynkin}). 
The graph $G={\mathcal C}_8^+$ is obtained by attaching a new vertex $v_0$ to both $v$ and $w$ with one edge. 
To get all graphs ${\mathcal C}_n^+$ with $n>8$, we first lengthen the chain at $v$, so that the resulting graph 
has $n-1$ vertices, and then add a vertex $v_0$ as above.
 Theorem \ref{thm.main1} (b) then implies that   
   $V_G(2) \supseteq  {\mathbb Z}_{> 0}$. 
To prove that $0\in V_G(2)$, we use the following lemma.
\end{proof}

\begin{lemma} \label{ex.arith.struct.cycle+}
Let $G={\mathcal C}_n^+$ with $n \geq 7$. Then $G$ has the following arithmetical structure $(M,R)$.
There are $k\geq 0$ white vertices in the graphs below. The vertices of the left graph are adorned with the corresponding coefficient of the diagonal of $M$, and  the vertices of the right graph are adorned with the corresponding coefficient of $R$:
$$
\begin{array}{cc}
\begin{tikzpicture}
[node distance=0.8cm, font=\small] 
\tikzstyle{vertex}=[circle, draw, fill, inner sep=0mm, minimum size=0.7ex]
\node[vertex]	(v1)  	at (0,0) 	[label=above:{$2$}] 		{};
\node[vertex]	(v22)		[right of=v1, label=above:{$2$}]	{};
\node[vertex]	(v33)			[right of=v22, label=above:{$2$}]	{};
 \node[vertex]	(v44)			[right of=v33, label=above:{$2$}]	{};
\node[vertex]	(v2)			[right of=v44, label=above:{$3$}]	{};
\node[vertex, fill=none]	(v3)	[right of=v2, label=above:{$2$}]	{};

\node[vertex]	(w1)			[below of=v22,  label=below:{$2$}]	{};
\node[vertex]	(w2)			[right of=w1, label=below:{$2$}]	{};
\node[vertex]	(w3)			[right of=w2, label=below:{$3$}]	{};
\node[vertex, fill=none]	(w4)			[right of=w3, label=below:{$2$}]	{};
\node[vertex, fill=none]	(w5)			[right of=w4, label=below:{$2$}]	{};
\draw [thick] (v1)--(v22)--(v33)--(v44)--(v2);
\draw [thick] (v22)--(w1);
\draw[thick] (w1)--(w4);
\draw[thick] (w5)--(v3)--(v2);
\draw[thick, dotted] (w4)--(w5);
\end{tikzpicture}
&
\quad \quad

\begin{tikzpicture}
[node distance=0.8cm, font=\small] 
\tikzstyle{vertex}=[circle, draw, fill, inner sep=0mm, minimum size=0.7ex]
\node[vertex]	(v1)  	at (0,0) 	[label=above:{$2$}] 		{};
\node[vertex]	(v22)		[right of=v1, label=above:{$4$}]	{};
\node[vertex]	(v33)			[right of=v22, label=above:{$3$}]	{};
 \node[vertex]	(v44)			[right of=v33, label=above:{$2$}]	{};
\node[vertex]	(v2)			[right of=v44, label=above:{$1$}]	{};
\node[vertex, fill=none]	(v3)	[right of=v2, label=above:{$1$}]	{};

\node[vertex]	(w1)			[below of=v22,  label=below:{$3$}]	{};
\node[vertex]	(w2)			[right of=w1, label=below:{$2$}]	{};
\node[vertex]	(w3)			[right of=w2, label=below:{$1$}]	{};
\node[vertex, fill=none]	(w4)			[right of=w3, label=below:{$1$}]	{};
\node[vertex, fill=none]	(w5)			[right of=w4, label=below:{$1$}]	{};
\draw [thick] (v1)--(v22)--(v33)--(v44)--(v2);
\draw [thick] (v22)--(w1);
\draw[thick] (w1)--(w4);
\draw[thick] (w5)--(v3)--(v2);
\draw[thick, dotted] (w4)--(w5);
\end{tikzpicture}
\end{array}
$$
The associated group $\Phi$ is cyclic of order $2k+5$.
\end{lemma}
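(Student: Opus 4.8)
The plan is to read off from the two diagrams the arithmetical datum on $G=\mathcal{C}_n^+$ (which has $n+1$ vertices, so here $k=n-7\ge 0$): the left diagram gives the diagonal entries $a_i$ of $M=M_G(a_1,\dots,a_{n+1})$ and the right diagram gives the entries $r_i$ of $^t\!R$, every white vertex carrying $a_i=2$ and $r_i=1$. The first step is to check that $(M,R)$ is an arithmetical structure. All $a_i\ge 2$ and all $r_i>0$, and some $r_i=1$, so $\gcd(r_1,\dots,r_{n+1})=1$; thus it remains only to verify $MR=0$. At each vertex $i$ this is the single scalar identity $a_i r_i=\sum_{j\sim i}r_j$, which I would check case by case: the pendant vertex, the degree-$3$ branch vertex, the two vertices carrying $a_i=3$, and each white vertex (where $a_i r_i=2$ equals the sum of its two neighbouring $R$-values, both equal to $1$). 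By the definition recalled before Theorem \ref{thm.main1} and by \ref{emp.classical}(b), $(M,R)$ is then an arithmetical structure and $M$ is positive semi-definite of rank $n$.

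Next I would compute the order of $\Phi$ from the formula $M^*=|\Phi|\,R\,(^t\!R)$ of \ref{emp.classical}(b). Let $v$ be the cycle vertex carrying $a_v=3$ and $r_v=1$ that is adjacent to the run of four $2$'s (the vertex $v_2$ of the figure). Since $r_v=1$, the relation $(M^*)_{vv}=\det(M_{vv})$ yields $|\Phi|=\det(M_{vv})$. Deleting $v$ opens the cycle, so $M_{vv}$ is the matrix of a tree, namely a spider with centre the branch vertex and three legs of lengths $2$, $1$ and $3+k$. I would evaluate this determinant by first pruning the pendant leaf and then running the three-term recursion $D_m=a_mD_{m-1}-D_{m-2}$ along the legs; this gives $\det(M_{vv})=2k+5$.

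The main obstacle is cyclicity. Because $r_v=1$, the isomorphism $\Phi\cong\Ker(^t\!R)/\im(M)$ descends, under the projection $\mathbb{Z}^{n+1}\to\mathbb{Z}^{n}$ forgetting the $v$-coordinate, to $\Phi\cong\Coker(B)$ with $B:=M_{vv}$: the condition $r_v=1$ identifies $\Ker(^t\!R)$ with $\mathbb{Z}^{n}$, and since $MR=0$ makes column $v$ of $M$ the $(-r_j)$-combination of the remaining columns, the image of $M$ projects exactly onto $\im(B)$. Hence $\Phi$ is cyclic if and only if the gcd of the $(n-1)\times(n-1)$ minors of the $n\times n$ matrix $B$ equals $1$. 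I would produce two such minors by deleting the row and column of each of two extremal leaves of the spider; expanding each resulting determinant along the column that has a single nonzero entry reduces it to a principal tree-minor, which the same recursion evaluates to $\pm(11+6k)$ and $\pm(21+12k)$. Since $21+12k=2(11+6k)-1$, these two values are coprime, so the gcd of the $(n-1)$-minors of $B$ is $1$ and $\Phi$ is cyclic. Together with $|\Phi|=2k+5$ this proves the claim. The delicate points are setting up the cokernel reduction correctly and choosing the deleted leaves so that the two minors are manifestly coprime; the determinant evaluations themselves are then routine applications of the recursion.
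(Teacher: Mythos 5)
Your proof is correct, and it rests on the same two tools as the paper's: the identity $M^*=|\Phi|\,R\,({}^t\!R)$ of \ref{emp.classical}(b) to pin down $|\Phi|$ via a principal minor, and a gcd-of-minors criterion to certify cyclicity. The choices differ, though, and yours are arguably cleaner. For the order, the paper deletes the unique degree-$3$ vertex (whose $R$-entry is $4$), computes $\det(M')=16(2k+5)$, and divides by $r_v^2=16$; you delete the cycle vertex with $a_v=3$, $r_v=1$, so that $|\Phi|$ is directly a single tree determinant, and your spider data (legs of lengths $1$, $2$, $3+k$, with the one diagonal entry $3$ sitting on the third vertex of the long leg) does give $\det(M_{vv})=2k+5$. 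Your cokernel identification $\Phi\iso\Coker(M_{vv})$ when $r_v=1$ is also correct and makes the cyclicity criterion self-contained; the paper instead exhibits a single minor of $M$ itself (delete the degree-$3$ vertex and an adjacent degree-$2$ vertex, giving $2(12k+29)$) and invokes coprimality with $|\Phi|=2k+5$, rather than coprimality of two minors as you do.

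One wording slip should be repaired. The minors you name are \emph{not} obtained by "deleting the row and column of each of two extremal leaves": those principal minors equal $13+7k$, $8+4k$ and $3+2k$, none of which is $11+6k$ or $21+12k$. Your values are the minors obtained by deleting the \emph{row of a leaf and the column of its neighbour}: deleting the row of the leg-$2$ leaf and the column of the adjacent vertex, one expansion along the single-entry column yields the principal minor at that pair of vertices, i.e.\ the determinant of the path on $5+k$ vertices through the centre, which is $11+6k$; deleting the row of the pendant leaf and the column of the centre yields the forest consisting of the leg of length $2$ and the leg of length $3+k$, of determinant $3(7+4k)=21+12k$. With this reading, your expansion procedure and the identity $21+12k=2(11+6k)-1$ are exactly right, so the gcd of the $(n-1)\times(n-1)$ minors of $B=M_{vv}$ is $1$ and $\Phi\iso{\mathbb Z}/(2k+5){\mathbb Z}$ is cyclic. (Even under the literal reading your strategy would survive, since the principal leaf-minors $8+4k$ and $3+2k$ are coprime, but the numbers in your text would then be wrong.)
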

\begin{proof} We leave it to the reader to verify that $(M,R)$ is an arithmetical structure. 
Consider the submatrix $M'$ of $M$ obtained by removing the row and column corresponding to the unique vertex $v$ of degree $3$.
 Its determinant is $16(2k+5)$. This shows that $|\Phi|=2k+5$ since the coefficient of $R$ corresponding to $v$ is $4$ and $M^*=|\Phi| R(^t\! R)$ (see \ref{emp.classical} (b)).
 
To show that $\Phi$ is cyclic, it suffices to compute the determinant of a well-chosen $n-2 \times n-2$ submatrix, and show that it is coprime to $|\Phi|$. For this, one can use the submatrix of $M'$ where the row and column corresponding to a vertex of degree $2$ adjacent to $v$ have been removed. 
Its determinant is $2(12k+29)$. We leave the details to the reader.
\end{proof}

\begin{remark} \label{rem.0notinV}
 When $n=4$ and $6$, the following are arithmetical structures on $G={\mathcal C}_n^+$, 
with groups $\Phi$ of order $1$ and $3$ (the vertices are adorned with the corresponding elements on the diagonal of the matrix). This shows that $0 \in V_G(2)$ when  $G={\mathcal C}_4^+$ and ${\mathcal C}_6^+$.
$$\begin{array}{cc}
\begin{tikzpicture}
[node distance=0.8cm, font=\small] 
\tikzstyle{vertex}=[circle, draw, fill, inner sep=0mm, minimum size=0.7ex]
\node[vertex]	(v1)  	at (0,0) 	[label=above:{$2$}] 		{};
\node[vertex]	(v22)		[right of=v1, label=above:{$2$}]	{};
\node[vertex]	(v33)			[right of=v22, label=above:{$2$}]	{};

\node[vertex]	(w1)			[below of=v22,  label=below:{$2$}]	{};
\node[vertex]	(w2)			[right of=w1, label=below:{$3$}]	{};
 
\draw [thick] (v1)--(v22)--(v33);
\draw [thick] (v22)--(w1);
\draw[thick] (w2)--(v33);
 \draw[thick] (w2)--(w1);
 
\end{tikzpicture}

& \quad \quad \quad

\begin{tikzpicture}
[node distance=0.8cm, font=\small] 
\tikzstyle{vertex}=[circle, draw, fill, inner sep=0mm, minimum size=0.7ex]
\node[vertex]	(v1)  	at (0,0) 	[label=above:{$2$}] 		{};
\node[vertex]	(v22)		[right of=v1, label=above:{$2$}]	{};
\node[vertex]	(v33)			[right of=v22, label=above:{$2$}]	{};
 \node[vertex]	(v44)			[right of=v33, label=above:{$2$}]	{};
\node[vertex]	(w1)			[below of=v22,label=below:{$2$}]	{};
\node[vertex]	(w2)			[right of=w1, label=below:{$2$}]	{};
\node[vertex]	(w3)			[right of=w2, label=below:{$4$}]	{};
 
\draw [thick] (v1)--(v22)--(v33)--(v44);
\draw [thick] (v22)--(w1);
\draw[thick] (w1)--(w3);
\draw[thick] (w3)--(v44);
 
\end{tikzpicture}
\end{array}
$$
It is likely that $0 \notin V_{d_G}(2)$ when $G={\mathcal C}_3^+$ and ${\mathcal C}_5^+$. For ${\mathcal C}_2^+$, this statement is proved in 
Proposition \ref{pro.G(a,b)}. Preliminary computations did not find any   arithmetical structure $(M,R)$ on ${\mathcal C}_4^+$ and ${\mathcal C}_6^+$ where all coefficients of the diagonal of $M$ are at least $2$, other than the ones given above.
\end{remark}

\begin{example} \label{ex.cone}
Consider   the cone $G=C(A_3)$ on the path $A_3$ (sometimes called the {\it diamond} graph).
Computations suggest that the complement of $V_G(2)$ in ${\mathbb Z}_{\geq 0}$  is finite, and contained in the set $L:= [ 5, 17, 29, 71, 77, 101, 137, 551]$.
Note that $\det M_G(2,2,5,3)=1$, and the graph $G$ below is adorned with the corresponding coefficients of the diagonal.
$$
\begin{tikzpicture}
[node distance=0.8cm, font=\small] 
\tikzstyle{vertex}=[circle, draw, fill, inner sep=0mm, minimum size=0.7ex]
\node[vertex]	(v1)  	at (0,0) 	[label=above:{$2$}] 		{};
\node[]	(dummy)  	[right of=v1, label=above:{}] {};
\node[vertex]	(v22)		[right of=dummy, label=above:{$3$}]	{};
\node[vertex]	(v33)			[below of=dummy, label=above:{$5$}]	{};
\node[vertex]	(w1)			[above of=dummy,  label=below:{$2$}]	{};
\draw [thick] (v1)--(v22)--(v33);
\draw [thick] (v1)--(w1)--(v22);
\draw [thick] (v1)--(v33)--(v22);
\end{tikzpicture}
$$ 
\end{example}

\begin{theorem} \label{thm.types}
Let $G$ be a connected simple graph which is not 
isomorphic to either ${\mathcal C}_n^+$, $n \geq 3$, or to the cone $C(A_3)$, and does not contain an induced subgraph
$H$ isomorphic to either ${\mathcal C}_n^+$, $n \geq 3$, or  $C(A_3)$.  
Then $G$ is either a tree, or a cycle  ${\mathcal C}_n$, $n \geq 3$, or 
a complete graph ${\mathcal K}_n$, $n \geq 4$, or a  complete bipartite graph ${\mathcal K}(p,q)$, $p,q \geq 2$.
\end{theorem}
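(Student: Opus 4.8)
The plan is to run a forbidden-subgraph analysis, splitting on whether $G$ is acyclic, triangle-free with a cycle, or contains a triangle. Throughout I would repeatedly invoke the following elementary \emph{pendant principle}: if $C$ is a chordless (induced) cycle of $G$ and $w\notin C$ has exactly one neighbour on $C$, then the vertex set $V(C)\cup\{w\}$ induces a pan $\mathcal{C}_{|C|}^+$, which is forbidden; hence every vertex off an induced cycle $C$ has $0$ or at least $2$ neighbours on $C$. First, if $G$ has no cycle then, being connected, it is a tree and we are done; so from now on assume $G$ contains a cycle.

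\textbf{Triangle case.} Suppose $G$ contains a triangle $T=\{a,b,c\}$. I would first establish a local \emph{absorption} statement: any vertex $w$ adjacent to at least one vertex of a triangle is adjacent to all three. Indeed, if $w$ meets exactly one vertex of $T$ then $\{w,a,b,c\}$ induces the paw $\mathcal{C}_3^+$, and if it meets exactly two it induces the diamond $C(A_3)=\mathcal{K}_4-e$; both are forbidden. A maximal-clique argument then finishes the case: let $K$ be a maximal clique containing $T$ (so $|K|\ge 3$); if some $w\notin K$ were adjacent to a vertex $a\in K$, then for every other $x\in K$ the triple $\{a,x,y\}$ (with $y\in K$ a third vertex) is a triangle through $a$, so $w$ absorbs all of $K$, contradicting maximality. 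Hence nothing outside $K$ touches it, and by connectedness $G=K=\mathcal{K}_k$ (a cycle $\mathcal{C}_3$ if $k=3$, a complete graph otherwise).

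\textbf{Triangle-free case.} Now $G$ is triangle-free, so its girth $g$ is at least $4$, and I would split on $g$. Let $C$ be a shortest cycle, which is necessarily chordless. If $g\ge 5$, the pendant principle says a vertex $w$ off $C$ that has one neighbour on $C$ has at least two; taking its two neighbours closest along $C$, at cyclic distance $d\le\lfloor g/2\rfloor$, these close up with $w$ into a cycle of length $d+2<g$, contradicting minimality of $g$. Hence nothing attaches to $C$ and $G=C=\mathcal{C}_g$. If $g=4$, fix an induced $C=v_1v_2v_3v_4$; triangle-freeness together with the pendant principle force every neighbour of $C$ to be adjacent to exactly one diagonal pair, i.e.\ to $\{v_1,v_3\}$ or to $\{v_2,v_4\}$. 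Setting $P=\{x:x\sim v_2,v_4\}$ and $Q=\{x:x\sim v_1,v_3\}$, I would check that $P\cap Q=\emptyset$ and that each of $P,Q$ is independent (an interior edge would create a triangle). The two remaining points are (i) $P$ is completely joined to $Q$ and (ii) $P\cup Q=V(G)$, both obtained by producing a forbidden $\mathcal{C}_4^+$: for (i), if $x\in P$ and $y\in Q$ were non-adjacent then $\{y,v_1,v_2,v_3\}$ induces a $4$-cycle on which $x$ has the single neighbour $v_2$; for (ii), a vertex $z\notin P\cup Q$ may be taken adjacent to some $u\in P$, and since $z$ then meets no $v_i$ it is a single-neighbour pendant on the induced $4$-cycle $\{u,v_1,v_2,v_4\}$. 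Thus $G=\mathcal{K}(|P|,|Q|)$ with $|P|,|Q|\ge 2$.

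\textbf{Main obstacle.} The combinatorially heaviest part is the girth-$4$ analysis: extracting the bipartition $(P,Q)$ from a single induced $C_4$ and then propagating, via the $\mathcal{C}_4^+$ obstruction, both the completeness of the join and the fact that no vertex escapes $P\cup Q$ (so that bipartiteness is a conclusion, not an assumption). The triangle case, by contrast, collapses quickly to clique absorption once the paw and diamond are excluded, and the acyclic and large-girth cases are immediate.
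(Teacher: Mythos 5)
Your proof is correct, and its skeleton --- case analysis on the girth, with the pan $\mathcal{C}_g^+$ and the diamond $C(A_3)$ as the two obstructions, plus a maximal-clique absorption argument in the triangle case --- is the same as the paper's, which also takes a shortest (hence chordless) cycle of length $m$, rules out $m\geq 5$ by producing a shorter cycle, and handles $m=3$ essentially as you do. Where you genuinely diverge is the girth-$4$ case. The paper first claims that every vertex attached to the fixed $4$-cycle $w_1w_2w_3w_4$ must attach to the \emph{same} diagonal pair $\{w_1,w_3\}$ as the first such vertex $w$, deriving a pan from the cycle $\{w_2,w',w_4,w_3\}$ together with $w$; it then grows a maximal complete bipartite subgraph $\mathcal{K}(r,s)$ and shows $G=\mathcal{K}(r,s)$ by a second round of pan exclusions. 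You instead allow vertices to attach to either diagonal pair, define $P$ and $Q$ globally, and prove the complete join and the exhaustion $P\cup Q=V(G)$ directly. Your execution is in fact the more robust one: the paper's intermediate claim is false as stated, since in $\mathcal{K}(3,3)$ (which satisfies all hypotheses of the theorem) the second vertex $w'$ attaches to $\{w_2,w_4\}$, and the claimed pan on $\{w_2,w',w_4,w_3\}\cup\{w\}$ does not arise because $w\sim w'$ there --- a configuration the paper's argument overlooks (the five vertices induce $\mathcal{K}(2,3)$, not $\mathcal{C}_4^+$). The paper's proof survives only because that claim is never needed: the copy of $\mathcal{K}(2,3)$ on $\{w_1,w_3\}$ and $\{w_2,w_4,w\}$ already suffices to launch the maximality argument. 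Your join argument, by contrast, invokes the pan only for \emph{non-adjacent} cross pairs $x\in P$, $y\in Q$, so it treats the $\mathcal{K}(3,3)$ configuration correctly; this is a small but genuine improvement in rigor over the published argument.
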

\begin{proof}
Assume that $G$ is neither a tree nor a cycle.
Let $m$ denote the length of the shortest cycle in $G$. Since we assume that $G$ is simple and not a tree, $m \geq 3$. Consider such a cycle ${\mathcal C}$ of length $m$ in $G$,
with consecutive vertices $w_1,\dots, w_m$. For $i=1,\dots, m-1$, the vertex $w_i$ is linked to the vertex $w_{i+1}$ by exactly one edge, and $w_m$ is linked to $w_1$ be one edge. 
This cycle has to be an induced subgraph of $G$. Indeed, if there existed an edge between two vertices of  ${\mathcal C}$ that are not consecutive, the graph $G$ would have a cycle of length smaller than $m$.  We show below that the case $m\geq 5$ is impossible, that when $m=4$, the graph $G$ is of the form ${\mathcal K}(p,q)$, and that when $m=3$, the graph $G$ is of the form ${\mathcal K}_n$. 

Since $G \neq {\mathcal C}$,   let $w$ be a vertex of $G$ not contained in ${\mathcal C}$, but connected by an edge to a vertex of ${\mathcal C}$.
Without loss of generality, we can assume that $w$ is connected to $w_1$. If $w$ is not connected to any other vertices of ${\mathcal C}$, then 
$G$ is equal to ${\mathcal C}_m^+$, or contains ${\mathcal C}_m^+$ as induced subgraph, contradicting our hypothesis. Let us then assume that $w$ is also connected by an edge to a vertex $w_i$
with $i>1$. This is not possible if $m \geq 5$. Indeed, if $m \geq 5$, then $G$ would contain a cycle of length smaller than $m$. 

Assume now that $m=4$, with the cycle ${\mathcal C}$ having vertices $w_1,w_2,w_3, w_4$, and with a vertex $w$ of $G$ not on ${\mathcal C}$ connected to $w_1$. In order for $G$ not to have cycles of length $3$, the vertex $w$ can only be connected to $w_3$. Let $w'$ be any other vertex of $G$ connected to a vertex of ${\mathcal C}$. We claim that $w'$ is then only connected to both $w_1$ and $w_3$. Indeed, assume that $w'$ is connected to $w_2$. 
Then it must be connected to $w_4$, otherwise $G$ contains a cycle of length $3$ or has ${\mathcal C}_4^+$ as induced subgraph. But now we again find a contradiction by considering the cycle $\{w_2,w',w_4,w_3\}$
with $w$ attached to $w_3$. This is an induced subgraph isomorphic to ${\mathcal C}_4^+$, contradicting our hypothesis. 
We have shown so far that $G$ contains a bipartite graph ${\mathcal K}(2,q)$ for some $q \geq 3$, with 
$\{w_1, w_3\}$ being the first set of $2$ vertices in the partition, and $\{w_2,w_4,w,w',\dots\}$ the second set of $q\geq 3 $ vertices. 

Consider now a maximal subgraph of $G$ of the form ${\mathcal K}(r,s)$, with $r \geq 2$ and $s \geq q$. 
By maximal we mean that $G$ does not contain a subgraph of the form ${\mathcal K}(r+1,s)$ or ${\mathcal K}(r,s+1)$. We claim then that $G={\mathcal K}(r,s)$.
For convenience, let us denote by $\{u_1,\dots, u_r\}$ and $\{t_1,\dots, t_s\}$ the vertices of the bipartite graph ${\mathcal K}(r,s)$, so that in  ${\mathcal K}(r,s)$, there are no edges between vertices in $\{u_1,\dots, u_r\}$ and no edges between vertices in  $\{t_1,\dots, t_s\}$.

Suppose that $G\neq {\mathcal K}(r,s)$. There cannot exist an edge of $G$ that links two vertices of ${\mathcal K}(r,s)$ that is not already an edge of ${\mathcal K}(r,s)$, since otherwise the graph $G$ would contain a cycle of length $3$. Thus there exists  a vertex $v$ of $G$ that is not a vertex of 
${\mathcal K}(r,s)$, and is linked by at least one edge to a vertex of ${\mathcal K}(r,s)$. Without loss of generality, we can assume that $v$ is linked to $u_1$. 
If $v$ is not linked to any other vertex of ${\mathcal K}(r,s)$, then $G$ contains an induced subgraph of the form ${\mathcal C}_4^+$, which is a contradiction. If $v$ is linked to any of the vertices $\{t_1,\dots, t_s\}$, 
then $G$ contains a cycle of length $3$, again a contradiction. Suppose now that 
$v$ is linked to $u_2$, but that for some $i \leq r$, $v$ is not linked to $u_i$. 
Then $\{v,u_1,t_1,u_2\}$ are the vertices of a $4$-cycle, and adding $u_i$ to it gives an induced subgraph of $G$ of the form  ${\mathcal C}_4^+$, again a contradiction. 
Thus we find that $G$ contains a graph of the form ${\mathcal K}(r,s+1)$, and this is not possible by maximality of the graph  ${\mathcal K}(r,s)$. Therefore,
$G= {\mathcal K}(r,s)$. 

Let us consider now the case $m=3$, with the cycle ${\mathcal C}$ having vertices $w_1,w_2,w_3$, and with a vertex $w$ of $G$ not on ${\mathcal C}$ connected to $w_1$. Since $G$ is not equal to ${\mathcal C}_3^+$, and does not contain ${\mathcal C}_3^+$ as induced subgraph, $w$ is connected to a second vertex of ${\mathcal C}$, say, without loss of generality, $w_2$. If $w$ is not connected to $w_3$, then $G$ contains the cone $C(A_3)$ as induced subgraph, contradicting our hypothesis. Hence, $w$ is also connected to $w_3$, and so $G$ contains ${\mathcal K}_4$ as induced subgraph.

Consider now a subgraph of $G$ that is of the form ${\mathcal K}_r$ for some $r \geq 4$ and which is maximal, in the sense that $G$ does not contain a subgraph isomorphic to  ${\mathcal K}_{r+1}$. We claim then that $G={\mathcal K}_r$.

Assume that $G\neq {\mathcal K}_r$. Since $G$ is simple, there must then exist a vertex $v$ of $G$ that is not a vertex of ${\mathcal K}_r$. For convenience, let us denote by $\{u_1,\dots, u_r\}$ the vertices of ${\mathcal K}_r$, and assume that $v $ is linked to $u_1$. If $v$ is not linked to any other vertex of ${\mathcal K}_r$, then $G$ contains an induced subgraph of the form ${\mathcal C}_3^+$, which is a contradiction. Suppose then that $v$ is linked to $u_2$, and that there exists some $u_i$ which is not linked to $v$. Then $\{v,u_1,u_2,u_i\}$ are the vertices of an induced subgraph of $G$ of the form $C(A_3)$, again a contradiction. Suppose then that $v$ is linked in $G$ to all vertices of ${\mathcal K}_r$. This is impossible since $G$ would then contain a subgraph of the form ${\mathcal K}_{r+1}$, contradicting the maximality of ${\mathcal K}_r$. 
Hence, $G={\mathcal K}_r$.
\end{proof}

\begin{emp} \label{proof.{thm.main2}}
{\it Proof of Theorem \ref{thm.main2}.}
Let $G$ be a connected simple graph that is neither a tree, a cycle, a complete bipartite graph ${\mathcal K}(p,q)$, a complete graph ${\mathcal K}_n$, nor 
 ${\mathcal C}_n^+$, $n \geq 3$, or the cone $C(A_3)$. 
Theorem \ref{thm.types} shows then that
$G$ has to contain an induced subgraph   $H$ of the form  $C(A_3)$ or ${\mathcal C}_n^+$ for some $n \geq 3$. 
All these graphs $H$ are such that $1 \in V_H(2)$ (see Proposition \ref{cor.tadpole} and Example \ref{ex.cone}). We can thus apply Theorem \ref{thm.main1} (b) to obtain that $V_G(2)$ contains ${\mathbb Z}_{>0}$
for such $G$.
 
When $n \geq 14$, the graph  ${\mathcal K}_n$ has the property that $V_{{\mathcal K}_n}(2)$ contains ${\mathbb Z}_{>0}$ (see Corollary \ref{cor.complete} (b)).
When $n \geq 8$, the graph  ${\mathcal C}_n^+$ has the property that $V_{{\mathcal C}_n^+}(2)$ contains ${\mathbb Z}_{>0}$ (see Proposition \ref{cor.tadpole}).  \qed
\end{emp}

Let ${\mathcal K}_n^+$ denote the graph obtained
from the complete graph ${\mathcal K}_n$ on $n$ vertices by adding one vertex and linking it to  ${\mathcal K}_n$ by exactly one edge. Such graph is a type of {\it lollipop} graph.  Recall that the {\it wheel} ${\mathcal W}_n$ is the cone on the cycle ${\mathcal C}_n$. 
In particular, ${\mathcal W}_3$ is the complete graph $ {\mathcal K}_4$ on $4$ vertices.

\begin{corollary} Let $n \geq 4$. We have $V_{G}(2) = {\mathbb Z}_{\geq 0}$ when $G$ is one of the following graphs:
\begin{enumerate}[\rm (a)]
\item  $G={\mathcal K}_n^+$.
\item   $G$ is  the cone $C(A_n)$ on the path $A_n$ on $n$ vertices. 
\item  $G$ is   the wheel ${\mathcal W}_n$.
\item $G$ is the graph on $n \geq 5$ vertices obtained from the cycle 
${\mathcal C}_n$ 
by adding 
 a new edge linking two vertices which are not already connected in the cycle.
\end{enumerate}
\end{corollary}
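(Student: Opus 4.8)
The plan is to realize each of the four graphs as the top of a chain of induced subgraphs to which Theorem \ref{thm.main1} (b) applies, starting from a small graph already known to satisfy $1 \in V_H(2)$. The two available seeds are the extended cycle ${\mathcal C}_3^+$ (a triangle with one pendant vertex, for which $1 \in V_{{\mathcal C}_3^+}(2)$ by Proposition \ref{cor.tadpole}) and the cone $C(A_3)$ (for which $1 \in V_{C(A_3)}(2)$ by Example \ref{ex.cone}). For parts (a), (b), (c) the extra hypothesis of Theorem \ref{thm.main1} (b)(ii) -- that each new vertex be attached with degree at least $2$ -- can be arranged directly, so each of these yields $V_G(2) = {\mathbb Z}_{\geq 0}$ in one stroke.

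For (a), write ${\mathcal K}_n^+$ as ${\mathcal K}_n$ on $v_1,\dots,v_n$ with a pendant $u$ attached to $v_1$; the induced subgraph on $\{u,v_1,v_2,v_3\}$ is a triangle with pendant $u$, i.e.\ ${\mathcal C}_3^+$, and I then adjoin $v_4,\dots,v_n$ one at a time, each joined to all previously added $v_j$ and hence of degree at least $2$ when added. For (b), the cone $C(A_n)$ on the path $w_1\cdots w_n$ contains $C(A_3)$ as the induced subgraph on the apex together with $w_1,w_2,w_3$ (the endpoints $w_1,w_3$ being non-adjacent); adjoining $w_4,\dots,w_n$ in order, each $w_i$ is joined to the apex and to $w_{i-1}$, hence has degree $2$. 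For (c), the wheel ${\mathcal W}_n$ with $n\geq 4$ likewise contains $C(A_3)$ on the apex and three consecutive rim vertices $w_1,w_2,w_3$; I build the cone on the path $w_1\cdots w_{n-1}$ exactly as in (b), and then adjoin the final rim vertex $w_n$, which is joined to the apex, to $w_{n-1}$ and to $w_1$, closing the rim with degree $3$. In each case Theorem \ref{thm.main1} (b)(ii) applies, giving $V_G(2)={\mathbb Z}_{\geq 0}$.

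Part (d) is the main obstacle. Here $G$ is the cycle ${\mathcal C}_n$ with one added chord: two vertices $a,b$ of degree $3$ are joined by three internally disjoint paths, namely the two arcs of lengths $p,q\geq 2$ with $p+q=n$, and the chord of length $1$. The naive chain fails, because away from $a,b$ every vertex has degree $2$ with both neighbours on its arc, so when one tries to rebuild an arc one vertex at a time the first interior vertex added would have at most one neighbour already present, violating the degree condition of Theorem \ref{thm.main1} (b)(ii). I therefore split the conclusion into $V_G(2)\supseteq{\mathbb Z}_{>0}$ and $0\in V_G(2)$. Since $n\geq 5$ forces $\max(p,q)\geq 3$, I may assume $p\geq 3$ and label the arcs $a=x_0,x_1,\dots,x_p=b$ and $a=z_0,z_1,\dots,z_q=b$. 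The cycle $C_2$ built from the $q$-arc and the chord has length $q+1\geq 3$, and $x_1$ is adjacent to exactly one of its vertices, namely $a$ (its other neighbour $x_2$ lies off $C_2$ because $p\geq 3$); hence $C_2\cup\{x_1\}$ is an induced ${\mathcal C}_{q+1}^+$ in $G$, and Theorem \ref{thm.main1} (b)(i) gives $V_G(2)\supseteq{\mathbb Z}_{>0}$.

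It remains to produce $0\in V_G(2)$, and for this I apply Proposition \ref{pro.induction} (c) to the degree-$2$ vertex $v=x_1$. The graph $G-x_1$ is connected (every vertex still reaches $b$ through the surviving arcs and the chord), and it still contains the intact cycle $C_2$; moreover $x_{p-1}$, which is distinct from $x_1$ since $p\geq 3$, is now adjacent to exactly one vertex of $C_2$, namely $b$, so $C_2\cup\{x_{p-1}\}$ is an induced ${\mathcal C}_{q+1}^+$ in $G-x_1$. Theorem \ref{thm.main1} (b)(i) applied to $G-x_1$ then gives $1\in V_{G-x_1}(2)$, and Proposition \ref{pro.induction} (c) yields $0\in V_G(2)$; combined with the previous paragraph this completes (d). The one routine point to verify throughout is that the chosen four-vertex and pendant-augmented sets really are induced subgraphs of the stated isomorphism type, which in every case comes down to checking the absence of the one or two "diagonal" edges.
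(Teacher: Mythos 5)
Your proposal is correct, and it agrees with the paper on parts (a) and (c): the paper also builds ${\mathcal K}_n^+$ from ${\mathcal C}_3^+$ via successive cones and obtains the wheel from $C(A_{n-1})$ by Theorem \ref{thm.main1} (b)(ii). Where you genuinely diverge is in (b) and (d). The paper first invokes the classification result (Theorem \ref{thm.main2}) to get $V_G(2)\supset {\mathbb Z}_{>0}$ for all four families simultaneously, and then, for (b) and (d), obtains $0\in V_G(2)$ by a one-line trick: every vertex of $C(A_n)$ and of the chorded cycle has degree at least $2$, so the ordinary Laplacian is an admissible structure, and its critical group is cyclic by an external citation (\cite{Lor08}, Corollary 6.7). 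You instead stay entirely inside the paper's own machinery: for (b) you run the Theorem \ref{thm.main1} (b)(ii) chain directly from the seed $C(A_3)$, which gives $V_G(2)={\mathbb Z}_{\geq 0}$ in one stroke, and for (d) -- where you correctly diagnose that the chain construction breaks down because rebuilding an arc forces a degree-$1$ addition -- you extract an induced ${\mathcal C}_{q+1}^+$ (using $\max(p,q)\geq 3$) to get the positive values via Theorem \ref{thm.main1} (b)(i), and then apply Proposition \ref{pro.induction} (c) at the degree-$2$ vertex $x_1$, checking that $G-x_1$ still contains an induced ${\mathcal C}_{q+1}^+$, to get $0\in V_G(2)$. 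Your route buys self-containedness (no appeal to the cyclicity of critical groups from \cite{Lor08}, nor to Theorem \ref{thm.main2}) and in fact produces arithmetical structures with trivial, not merely cyclic, group; the paper's Laplacian argument buys brevity and sidesteps exactly the degree obstruction in (d) that you had to work around by hand. All of your induced-subgraph verifications (non-adjacency of $w_1,w_3$; $x_2, x_{p-2}\notin C_2$ since $p\geq 3$; connectedness of $G-x_1$) are sound.
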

\begin{proof} We can apply Theorem \ref{thm.main2} to each graph in the statement of the corollary to obtain that $V_{G}(2) \supset {\mathbb Z}_{> 0}$. In each case, we can further strengthen this result by showing that $0 \in V_G(2)$ as follows.

(a) Since ${\mathcal K}_n$ is the cone on ${\mathcal K}_{n-1}$, 
 Theorem \ref{thm.main1} (b) (ii) can be used to show that $0 \in V_G(2)$.

\if false
Recall that ${\mathcal K}_3^+= {\mathcal C}_3^+$.
Example \ref{ex.lollipop} shows that when $n=3$,  $1 \in V_{{\mathcal C}_3^+}(2)$. Since ${\mathcal K}_n$ is the cone on ${\mathcal K}_{n-1}$, 
we can use induction on ${\mathcal K}_{n-1}^+$ and apply Theorem \ref{thm.main1} (b)  to 
obtain that $V_{{\mathcal K}_n^+}(2) = {\mathbb Z}_{\geq 0}$.
\fi

(b) and (d)
\if false
 The cone $G=C(A_n)$ contains the graph $H={\mathcal C}_3^+$ as induced subgraph as soon as $n \geq 4$. Example \ref{ex.lollipop} shows that $1 \in V_{{\mathcal C}_3^+}(2)$.
It follows then immediately  from Theorem \ref{thm.main1} (b) that $V_{G}(2) \supseteq {\mathbb Z}_{>0}$.
The same result follows from the fact that $C(A_n)$ contains the graph $H=C(A_3)$ as induced subgraph as soon as $n \geq 4$. Example \ref{ex.cone} shows that $1 \in V_{C(A_3)}(2)$. 
\fi
To show that $0 \in V_{G}(2)$, we take the usual Laplacian of $G$, and note that its associated critical group is always cyclic (\cite{Lor08}, Corollary 6.7).

(c) The wheel ${\mathcal W}_n$ is the cone on the cycle $ {\mathcal C}_n$. 
By removing a vertex $v$ on the wheel that belongs to the original cycle, we obtain an induced subgraph isomorphic to the cone $C(A_{n-1})$. We have shown above that $1 \in V_{C(A_{n-1})}(2)$ when $n-1 \geq 3$. 
It follows then immediately  from Theorem \ref{thm.main1} (b) (ii) that $V_{G}(2) ={\mathbb Z}_{\geq 0}$. 
\if false
Since the vertex $v$ has degree $3$, we can also use Theorem \ref{thm.main1} (b) to show that $0 \in V_G(2)$. 
\fi 
\if false
Because $n \geq 5$, the graph $G$ has a vertex $v$ of degree $2$ which is connected to another vertex of degree $2$ and to one vertex of degree   $3$. The subgraph $G_v$ consists in a $m$-cycle for some $m \leq n-2$ with a path attached to it. This graph is found to have $1 \in V_{G_v}(2)$ using  Theorem \ref{thm.main1} (b) and Proposition \ref{cor.tadpole}. Thus  Theorem \ref{thm.main1} (b) be applied again to show that $V_{G}(2)\supseteq {\mathbb Z}_{> 0}$.

To show that $0 \in V_{G}(2)$, we take the usual Laplacian of $G$, and note that its associated critical group is always cyclic (\cite{Lor08}, Corollary 6.7). 
\fi
\end{proof}

\if false
\begin{corollary} \label{cor.completeextended}
Let $G={\mathcal K}_n^+$. When $n \geq 4$, $V_{G}(2) = {\mathbb Z}_{\geq 0}$.
\end{corollary}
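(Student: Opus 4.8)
The plan is to obtain the equality $V_G(2) = \mathbb{Z}_{\geq 0}$ in one stroke from Theorem \ref{thm.main1} (b)(ii), which (once its hypotheses are checked) delivers \emph{both} $V_G(2) \supseteq \mathbb{Z}_{>0}$ and $0 \in V_G(2)$ simultaneously. To apply it I must produce two things: a connected induced subgraph $H \subseteq G$ with $1 \in V_H(2)$, and a chain of induced subgraphs $H = H_1 \subset H_2 \subset \dots \subset H_k = G$ in which each $H_{i+1}$ is obtained from $H_i$ by adjoining a single vertex of degree at least $2$.

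First I would take $H = {\mathcal C}_3^+ = {\mathcal K}_3^+$, the triangle with one pendant vertex. It embeds in $G = {\mathcal K}_n^+$ as an induced subgraph: retain the pendant vertex together with its neighbor $u_1$ in the clique and any two further clique vertices $u_2, u_3$; the subgraph induced on $\{u_1, u_2, u_3, \text{pendant}\}$ is a triangle carrying a pendant edge. Since $1 \in V_{{\mathcal C}_3^+}(2)$ is already recorded in Example \ref{ex.lollipop} (see also Proposition \ref{cor.tadpole}), the first hypothesis holds.

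Next I would realize $G$ as the top of the clique-growing tower $H_1 = {\mathcal K}_3^+ \subset {\mathcal K}_4^+ \subset \dots \subset {\mathcal K}_n^+ = H_{n-2}$. Here ${\mathcal K}_m^+$ is an induced subgraph of ${\mathcal K}_{m+1}^+$ (delete any clique vertex other than the one carrying the pendant, leaving ${\mathcal K}_m$ with its pendant intact), and ${\mathcal K}_{m+1}^+$ is recovered from ${\mathcal K}_m^+$ by adjoining one new vertex joined to all $m$ vertices of the clique while leaving the pendant untouched. That new vertex has degree $m \geq 3 \geq 2$, so the degree-at-least-$2$ requirement of Theorem \ref{thm.main1} (b)(ii) is satisfied at every step of the chain. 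Applying the theorem with $H = H_1$ and this tower then yields $V_{{\mathcal K}_n^+}(2) = \mathbb{Z}_{\geq 0}$.

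The only genuinely delicate point — and the reason one cannot simply exhibit the Laplacian to witness $0 \in V_G(2)$ — is that the pendant vertex of ${\mathcal K}_n^+$ has degree $1$, so the Laplacian is not a point with all diagonal entries $\geq 2$ and is therefore ineligible for $V_G(2)$. The tower route circumvents this: the membership $0 \in V_G(2)$ is produced by the arithmetical structure of Proposition \ref{pro.induction} (c), whose adjoined diagonal entry $a = ({}^t S)\,N^* S$ is forced to be $\geq 2$ precisely because each newly added vertex has degree $\geq 2$. Consequently the one thing to verify with care is this degree condition along the chain; the rest is the routine identification of induced subgraphs, and I expect no essential obstacle to remain.
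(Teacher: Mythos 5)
Your proof is correct and is essentially the paper's own argument: the paper also obtains the result by noting that ${\mathcal K}_n$ is the cone on ${\mathcal K}_{n-1}$, so that ${\mathcal K}_n^+$ is built from ${\mathcal C}_3^+={\mathcal K}_3^+$ (where $1\in V_{{\mathcal C}_3^+}(2)$ by Example \ref{ex.lollipop}) by successively adjoining vertices of degree at least $2$, and then invokes Theorem \ref{thm.main1} (b)(ii). Your observation that the Laplacian is unusable here because of the degree-$1$ pendant vertex, and that the degree condition is what makes Proposition \ref{pro.induction} (c) supply $0\in V_G(2)$, matches the paper's reasoning exactly.
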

\begin{proof}
Recall that ${\mathcal K}_3^+= {\mathcal C}_3^+$.
Example \ref{ex.lollipop} shows that when $n=3$,  $1 \in V_{{\mathcal C}_3^+}(2)$.
Since ${\mathcal K}_n$ is the cone on ${\mathcal K}_{n-1}$, 
we can use induction on ${\mathcal K}_{n-1}^+$ and apply Theorem \ref{thm.main1} (b)  to 
obtain that $V_{{\mathcal K}_n^+}(2) = {\mathbb Z}_{\geq 0}$.
\end{proof}

\begin{proposition} \label{pro.addedge}
Let $G$ denote the graph on $n \geq 5$ vertices obtained from the cycle on $n$ vertices
by adding one diagonal edge. 
 Then $V_{G}(2)={\mathbb Z}_{\geq 0}$.
\end{proposition}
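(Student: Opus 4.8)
The plan is to establish the two inclusions $V_G(2)\supseteq{\mathbb Z}_{>0}$ and $0\in V_G(2)$ separately. Throughout, label the cycle $w_1,\dots,w_n$ and let the chord join $w_1$ to $w_s$; since its endpoints are non-adjacent along the cycle, $3\le s\le n-1$. The chord cuts $G$ into two cycles meeting along the chord: an inner one on $\{w_1,\dots,w_s\}$ of length $s$, and an outer one on $\{w_1,w_s,w_{s+1},\dots,w_n\}$ of length $n-s+2$. After renumbering I may assume the inner cycle is the shorter of the two, so that $s\le n-s+2$; combined with $n\ge 5$ this forces $3\le s\le n-2$.

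First I would prove $V_G(2)\supseteq{\mathbb Z}_{>0}$ by producing an induced extended cycle inside $G$. I claim the vertex set $\{w_1,\dots,w_s,w_{s+1}\}$ induces a copy of ${\mathcal C}_s^+$. Indeed, among $w_1,\dots,w_s$ the only edges of $G$ are the path edges $w_1w_2,\dots,w_{s-1}w_s$ together with the chord $w_1w_s$, so these vertices induce a cycle of length $s$; and $w_{s+1}$ meets this set in the single edge $w_sw_{s+1}$, because its other cycle-neighbour $w_{s+2}$ satisfies $w_{s+2}\ne w_1$ (this is where $s\le n-2$ is used) and hence lies outside the set. Since $1\in V_{{\mathcal C}_s^+}(2)$ by Proposition \ref{cor.tadpole}, Theorem \ref{thm.main1}(b)(i) applied to this induced subgraph yields $V_G(2)\supseteq{\mathbb Z}_{>0}$. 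Alternatively one may invoke Theorem \ref{thm.main2} directly: the degree sequence of $G$ is $(3,3,2,\dots,2)$, so for $n\ge 5$ the graph $G$ is none of the excluded families (a tree, a cycle, a complete graph, a complete bipartite graph, an extended cycle, or the cone $C(A_3)$), and the theorem applies.

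It then remains to show $0\in V_G(2)$, and here I would use the Laplacian $L=M_G(d_1,\dots,d_n)$ with $d_i=\deg(w_i)$. The two chord endpoints have degree $3$ and every other vertex has degree $2$, so $L=M_G(a_1,\dots,a_n)$ with all $a_i\ge 2$. Since $G$ is connected, $L$ is positive semi-definite of rank $n-1$ with $\det L=0$ and kernel spanned by $(1,\dots,1)$, so conditions (i) and (ii) in the definition of $V_G(2)$ hold for the value $u=0$. The last condition (iii), that the critical group $\Phi_L$ be cyclic, holds for $G$ by \cite{Lor08}, Corollary 6.7. Combining the two parts then gives $V_G(2)={\mathbb Z}_{\geq 0}$.

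The step I expect to be the crux is the cyclicity of $\Phi_L$. If the cited result cannot be used verbatim, I would verify it by hand: $G$ is the theta graph obtained from two hub vertices joined by the chord and by the two arcs of lengths $p$ and $q$ with $p+q=n$, so the matrix-tree theorem gives $|\Phi_L|=pq+p+q$, and it suffices to check, for each prime $\ell$ dividing this number, that at most one of the invariant factors of the Laplacian is divisible by $\ell$; equivalently, that a reduced Laplacian of $G$ has rank $n-2$ over ${\mathbb F}_\ell$. This is a routine rank computation for the tridiagonal-plus-hub matrix $L$, and is the only genuinely computational part of the argument.
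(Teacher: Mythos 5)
Your proposal is correct and follows essentially the same route as the paper: the paper obtains $V_G(2)\supseteq{\mathbb Z}_{>0}$ by invoking Theorem \ref{thm.main2} (your alternative route; your explicit induced ${\mathcal C}_s^+$ argument simply unwinds the proof of that theorem via Proposition \ref{cor.tadpole} and Theorem \ref{thm.main1} (b)(i)), and it proves $0\in V_G(2)$ by exactly your argument, namely the usual Laplacian together with the cyclicity of its critical group from \cite{Lor08}, Corollary 6.7. One small caution on the alternative route only: for $n=5$ the degree sequence $(3,3,2,2,2)$ does not by itself exclude ${\mathcal K}(2,3)$, which has the same degree sequence; one must add that your graph contains a triangle (the chord cuts ${\mathcal C}_5$ into a $3$-cycle and a $4$-cycle) while ${\mathcal K}(2,3)$ is bipartite --- but since your primary induced-${\mathcal C}_s^+$ argument is complete, this does not affect the validity of the proof.
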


\begin{proof}
Because $n \geq 5$, the graph $G$ has a vertex $v$ of degree $2$ which is connected to another vertex of degree $2$ and to one vertex of degree   $3$. The subgraph $G_v$ consists in a $m$-cycle for some $m \leq n-2$ with a path attached to it. This graph is found to have $1 \in V_{G_v}(2)$ using  Theorem \ref{thm.main1} (b) and Proposition \ref{cor.tadpole}. Thus  Theorem \ref{thm.main1} (b) be applied again to show that $V_{G}(2)\supseteq {\mathbb Z}_{> 0}$.

It remains to show that $0 \in V_{G}(2)$. For this, we take the usual Laplacian of $G$, and note that its associated critical group is always cyclic (\cite{Lor08}, Corollary 6.7).
\end{proof}

\begin{corollary}  \label{cor.cone-wheel} 
Let $n \geq 4$. 
\begin{enumerate}[\rm (a)]
\item Let $G$ be  the cone $C(A_n)$ on the path $A_n$ on $n$ vertices. Then $V_{G}(2) = {\mathbb Z}_{\geq 0}$.
\item Let $G$ be   the wheel ${\mathcal W}_n$. Then $V_{G}(2)  = {\mathbb Z}_{\geq 0}$. 
\end{enumerate}
\end{corollary}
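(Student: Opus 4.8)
The plan is to obtain the full equality $V_G(2)={\mathbb Z}_{\geq 0}$ in a single step from the inductive criterion Theorem \ref{thm.main1}(b)(ii) whenever that criterion applies, and to split off the value $0$ by a separate argument in the one family where it does not. For each graph I first record a connected induced subgraph $H$ with $1\in V_H(2)$: for ${\mathcal K}_n^+$ take $H={\mathcal C}_3^+={\mathcal K}_3^+$ (Proposition \ref{cor.tadpole}); for the cone $C(A_n)$ and for the wheel ${\mathcal W}_n=C({\mathcal C}_n)$ take $H=C(A_3)$ (Example \ref{ex.cone}); and for the cycle-plus-chord take an induced extended cycle ${\mathcal C}_m^+$, which is always present since, together with the chord, the cycle ${\mathcal C}_n$ contains a shorter cycle to which some vertex is attached by a single edge.

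For families (a), (b) and (c) I would then exhibit $G$ as a chain of induced subgraphs $H=H_1\subset H_2\subset\dots\subset H_k=G$ in which every newly adjoined vertex has degree at least $2$, so that Theorem \ref{thm.main1}(b)(ii) gives $V_G(2)={\mathbb Z}_{\geq 0}$ at once. Concretely: ${\mathcal K}_m^+\subset{\mathcal K}_{m+1}^+$ cones the clique, adjoining a vertex of degree $m\ge 3$; $C(A_k)\subset C(A_{k+1})$ adjoins a new path-endpoint, which is joined both to the old endpoint and to the apex and hence has degree $2$; and ${\mathcal W}_n$ is reached by prolonging the chain $C(A_3)\subset\dots\subset C(A_{n-1})$ and then closing the cycle with one last vertex of degree $3$. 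The only thing to check is the degree-$\ge 2$ condition at each step, which holds by inspection.

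Family (d) is the genuine obstacle, and it is precisely where the chain method breaks down: building the cycle-plus-chord up from its induced ${\mathcal C}_m^+$ forces the vertices of the remaining path to be attached one at a time, and the first of them enters as a pendant of degree $1$, so Theorem \ref{thm.main1}(b)(ii) cannot be used and $0\in V_G(2)$ must be produced directly. Here I would use the Laplacian $L=M_G(d_1,\dots,d_n)$, which has determinant $0$ and is positive semi-definite of rank $n-1$; since the cycle-plus-chord has every vertex of degree $2$ or $3$, all diagonal entries of $L$ are $\ge 2$. The one substantive point left is that the associated critical group $\Phi_L$ is cyclic, and this is exactly what \cite{Lor08}, Corollary 6.7, provides, the graph being a theta graph obtained from a cycle by a single chord. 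Combined with the positive values from Theorem \ref{thm.main1}(b)(i) applied to the induced ${\mathcal C}_m^+$ of the first paragraph, this yields $V_G(2)={\mathbb Z}_{\geq 0}$. The same Laplacian argument would also settle the value $0$ for $C(A_n)$, giving an alternative to the chain in (b); it is, however, unavailable for ${\mathcal K}_n^+$ exactly because the pendant vertex has degree $1$, which is why (a) genuinely needs the inductive construction.
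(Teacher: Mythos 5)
Your proof of both parts of the statement is correct. For the wheel it is essentially the paper's own argument: ${\mathcal W}_n$ is the cone on ${\mathcal C}_n$, deleting a rim vertex leaves an induced $C(A_{n-1})$ with $1 \in V_{C(A_{n-1})}(2)$, and the deleted vertex has degree $3$, so Theorem \ref{thm.main1} (b)(ii) gives $V_{{\mathcal W}_n}(2) = {\mathbb Z}_{\geq 0}$ in one stroke. For the cone $C(A_n)$, however, you take a genuinely different route to the value $0$: the paper first gets $V_{C(A_n)}(2) \supset {\mathbb Z}_{>0}$ (via Theorem \ref{thm.main2}, or via an induced ${\mathcal C}_3^+$ or $C(A_3)$ and Theorem \ref{thm.main1} (b)(i)), and then obtains $0 \in V_{C(A_n)}(2)$ from the usual Laplacian, whose critical group is cyclic by \cite{Lor08}, Corollary 6.7 — a legitimate witness here since every vertex of $C(A_n)$ has degree at least $2$. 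You instead exhibit the chain of connected induced subgraphs $C(A_3) \subset C(A_4) \subset \dots \subset C(A_n)$, where each newly adjoined path-endpoint is joined to the previous endpoint and to the apex and hence has degree exactly $2$; since $1 \in V_{C(A_3)}(2)$ by Example \ref{ex.cone}, Theorem \ref{thm.main1} (b)(ii) then yields $V_{C(A_n)}(2) = {\mathbb Z}_{\geq 0}$ directly, with no appeal to \cite{Lor08}. Your route treats the cone and the wheel uniformly by the same coning chain and is self-contained within the paper's machinery; the paper's route is shorter given the known cyclicity result and has the virtue of producing an explicit small-entry arithmetical structure (the Laplacian itself) realizing $0$, whereas the inductive construction underlying Theorem \ref{thm.main1} (b)(ii) produces diagonal entries that grow very rapidly, as the paper illustrates for ${\mathcal W}_6$. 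The material in your write-up on ${\mathcal K}_n^+$ and on the chorded cycle lies outside the statement under review (it belongs to the paper's expanded corollary); for what it is worth, your chain for ${\mathcal K}_n^+$ and your observation that the chain method fails for the chorded cycle, forcing the Laplacian argument there, both agree with what the paper does.
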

\begin{proof} 
(a) The cone $G=C(A_n)$ contains the graph $H={\mathcal C}_3^+$ as induced subgraph as soon as $n \geq 4$. Example \ref{ex.lollipop} shows that $1 \in V_{{\mathcal C}_3^+}(2)$.
It follows then immediately  from Theorem \ref{thm.main1} (b) that $V_{G}(2) \supseteq {\mathbb Z}_{>0}$.
The same result follows from the fact that $C(A_n)$ contains the graph $H=C(A_3)$ as induced subgraph as soon as $n \geq 4$. Example \ref{ex.cone} shows that $1 \in V_{C(A_3)}(2)$. It remains to show that $0 \in V_{G}(2)$. For this, we take the usual Laplacian of $G$, and note that its associated critical group is always cyclic (\cite{Lor08}, Corollary 6.7).

(b) The wheel ${\mathcal W}_n$ is the cone on the cycle $ {\mathcal C}_n$. 
By removing a vertex $v$ on the wheel that belongs to the original cycle, we obtain an induced subgraph isomorphic to the cone $C(A_{n-1})$. We have shown above that $1 \in V_{C(A_{n-1})}(2)$ when $n-1 \geq 3$. 
It follows then immediately  from Theorem \ref{thm.main1} (b) that $V_{G}(2) \supseteq {\mathbb Z}_{>0}$. 
Since the vertex $v$ has degree $3$, we can also use Theorem \ref{thm.main1} (b) to show that $0 \in V_G(2)$. 
\end{proof}
\fi
\begin{example}  Let $G={\mathcal W}_n$.
The critical group $\Phi$ associated to the Laplacian of  ${\mathcal W}_n$ is not cyclic (see \cite{Big}, 9.2).
Starting with the extended cycle 
${\mathcal C}_3^+$,
Theorem \ref{thm.main1} (b) lets us construct on $G$
a different arithmetical structure $(M,R)$ whose associated group $\Phi$ is trivial.

 We note below on the example of ${\mathcal W}_6$ that the coefficients of the diagonal of $M$ quickly become very large with this construction. In the wheel ${\mathcal W}_6$  on the left below, the vertices of the original ${\mathcal C}_3^+$
are indicated in white. We then constructed ${\mathcal W}_6$ by adding in sequence three vertices, whose corresponding coefficients on the diagonal are $46$, $1478$, and $1548583$.
$$
\begin{array}{cc}
\begin{tikzpicture}
[node distance=0.8cm, font=\small] 
\tikzstyle{vertex}=[circle, draw, fill, inner sep=0mm, minimum size=0.7ex]
\node[vertex, fill=none]	(v1)  	at (0,0) 	[label=above:{$2$}] 		{};
\node[]	(dummy1)		[right of=v1, label=above:{}]	{};
\node[vertex, fill=none]	(v2)		[right of=dummy1, label=above:{$2$}]	{};
\node[]	(dummy2)		[right of=v2, label=above:{}]	{};
\node[vertex, fill=none]	(v3)			[right of=dummy2, label=above:{$2$}]	{};
\node[vertex, fill=none]	(u1)			[above of=dummy1, label=above:{$3$}]	{};
\node[vertex]	(w1)			[below of=dummy1,label=below:{$1478$}]	{};
\node[vertex]	(u2)			[above of=dummy2, label=above:{$46$}]	{};
\node[vertex]	(w2)			[below of=dummy2, label=below:{$1548583$}]	{};
 
\draw [thick] (v1)--(v2)--(v3);
\draw [thick] (u1)--(u2)--(v3)--(w2)--(w1)--(v1)--(u1);
\draw[thick] (w2)--(v2)--(u1);
\draw[thick] (w1)--(v2)--(u2);
 
\end{tikzpicture}
& \quad \quad \quad 
\begin{tikzpicture}
[node distance=0.8cm, font=\small] 
\tikzstyle{vertex}=[circle, draw, fill, inner sep=0mm, minimum size=0.7ex]
\node[vertex ]	(v1)  	at (0,0) 	[label=above:{$2$}] 		{};
\node[]	(dummy1)		[right of=v1, label=above:{}]	{};
\node[vertex ]	(v2)		[right of=dummy1, label=above:{$28$}]	{};
\node[]	(dummy2)		[right of=v2, label=above:{}]	{};
\node[vertex ]	(v3)			[right of=dummy2, label=above:{$2$}]	{};
\node[vertex ]	(u1)			[above of=dummy1, label=above:{$2$}]	{};
\node[vertex]	(w1)			[below of=dummy1,label=below:{$3$}]	{};
\node[vertex]	(u2)			[above of=dummy2, label=above:{$2$}]	{};
\node[vertex]	(w2)			[below of=dummy2, label=below:{$3$}]	{};
 
\draw [thick] (v1)--(v2)--(v3);
\draw [thick] (u1)--(u2)--(v3)--(w2)--(w1)--(v1)--(u1);
\draw[thick] (w2)--(v2)--(u1);
\draw[thick] (w1)--(v2)--(u2);
 
\end{tikzpicture}
\end{array}
$$
  
We describe in Lemma \ref{lem.wheel} a different arithmetical structure 
on ${\mathcal W}_{2k}$, where  $\Phi$ is cyclic of order $6k-1$, and where the coefficients on the diagonal do not grow as fast. We have illustrated the case $k=3$ on the right above.

Arithmetical structures on the wheel  ${\mathcal W}_6$  are plentiful. For instance, looking only among structures with $M_G(a_1,\dots, a_7)$ having $a_i \in [2,30]$ for all $i=1,\dots, 7$, we found structures with $198$ different group orders for their associated group $\Phi$. Among the orders found, $94$ are squarefree, so that the corresponding groups are cyclic.
\end{example}

\begin{lemma} \label{lem.wheel}
Order the vertices of ${\mathcal W}_{2k}$ as follows. 
The first vertex in our enumeration is $u$, the unique vertex of degree $2k$. Then we let $v_1,\dots,v_k, w_k,w_{k-1},\dots,w_1$ denote the consecutive vertices on the cycle, all of degree $3$.
The transpose of the vector $R$ is $(1,r_1,\dots,r_k,s_k,\dots, s_1)$,
with $r_i=s_i$ for all $i=1,\dots,k$. We set $s_k=k$, $s_{k-1}=k+(k-1)$, 
and so on, until we get to $s_1=k+\dots+2+1=k(k+1)/2$. 
Note that $a:=2\sum_{i=1}^k s_i= 2\sum_{i=1}^k i^2= k(k+1)(2k+1)/3$.  
 The diagonal of the matrix $M$ of the structure is $(a,2,\dots,2,3,3,2,\dots,2)$. Then $(M,R)$ is an arithmetical structure on ${\mathcal W}_{2k}$
 whose associated group $\Phi$ is cyclic of order $6k-1$.
 \end{lemma}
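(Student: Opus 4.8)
The plan is to verify the two defining properties of an arithmetical structure directly, and then to read off both the order and the cyclicity of $\Phi$ from two explicit determinant evaluations. Write $s_j=\sum_{i=j}^{k}i$, so that $s_{k+1}=0$ and $s_j-s_{j+1}=j$; by construction the $u$-coordinate of $R$ is $1$ and the $v_j$- and $w_j$-coordinates both equal $s_j$. Since the first coordinate of $R$ is $1$, its coordinates are automatically coprime, and they are visibly positive; so the only substantive point in showing that $(M,R)$ is an arithmetical structure is the relation $MR=0$. I would check this one vertex at a time, the equation at a vertex $x$ with diagonal entry $a_x$ being $a_xR_x=\sum_{y\sim x}R_y$. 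At the hub this reads $a\cdot 1=2\sum_{i=1}^{k}s_i$, which is the definition of $a$; at an interior cycle vertex of diagonal $2$ it becomes $2s_j=1+s_{j-1}+s_{j+1}$, i.e.\ $s_{j-1}+s_{j+1}=2s_j-1$; at the two vertices $v_1,w_1$ adjacent across the cycle it becomes $s_1=1+s_2$; and at the two vertices $v_k,w_k$ of diagonal $3$ it becomes $s_{k-1}=2s_k-1$. Each of these is immediate from $s_j-s_{j+1}=j$. Once $MR=0$ is known, \ref{emp.classical}(b) guarantees that $M$ is positive semidefinite of rank $n-1$ and that $M^{*}=|\Phi|\,R(^t\!R)$.

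To compute the order, I would evaluate the identity $M^{*}=|\Phi|\,R(^t\!R)$ at the $(u,u)$ entry: the left side is the $(u,u)$ cofactor $\det(M_{uu})$, and the right side is $|\Phi|\,R_u^2=|\Phi|$, so $|\Phi|=\det(M_{uu})$. Here $M_{uu}$ is the cyclic tridiagonal matrix of the $2k$-cycle with diagonal $(2,\dots,2,3,3,2,\dots,2)$ (the two $3$'s at $v_k,w_k$) and all off-diagonal and corner entries equal to $-1$. I would compute this determinant by the transfer-matrix identity $\det=\operatorname{tr}\!\bigl(T(a_1)\cdots T(a_{2k})\bigr)-2$ for $T(a)=\begin{pmatrix}a&-1\\1&0\end{pmatrix}$ (easily checked on a constant-diagonal cycle). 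Using $T(2)^{m}=\begin{pmatrix}1+m&-m\\m&1-m\end{pmatrix}$ and $T(3)^2=\begin{pmatrix}8&-3\\3&-1\end{pmatrix}$, the relevant product is $T(2)^{k-1}T(3)^2T(2)^{k-1}$, whose trace works out to $6k+1$; hence $\det(M_{uu})=6k+1-2=6k-1=|\Phi|$.

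For cyclicity, recall that since $M$ has rank $n-1$, the group $\Phi$ is cyclic iff $f_{n-2}=1$, equivalently iff the determinantal divisor $d_{n-2}$ (the gcd of all $(n-2)\times(n-2)$ minors) equals $1$. As $d_{n-2}\mid d_{n-1}=|\Phi|$ and $d_{n-2}$ divides any particular $(n-2)$-minor, it suffices to exhibit one $(n-2)$-minor coprime to $6k-1$ (this is the method already used in the proof of Lemma \ref{ex.arith.struct.cycle+}). I would take the principal minor obtained by deleting the rows and columns of $u$ and $v_1$: this breaks the cycle into the open path $v_2-\cdots-v_k-w_k-\cdots-w_1$ of diagonal $(2,\dots,2,3,3,2,\dots,2)$, whose continuant equals $\bigl(T(2)^{k-2}T(3)^2T(2)^{k-1}\bigr)_{11}=3k^2+k-1$. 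A one-line Euclidean computation shows $\gcd(3k^2+k-1,\,6k-1)\mid 3$, and since $6k-1\equiv 2\pmod 3$ the gcd is $1$. Hence $\Phi$ is cyclic of order $6k-1$.

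The only real work lies in the two determinant evaluations, and the mild obstacle is producing clean closed forms rather than unwieldy $2k\times 2k$ expansions. The transfer-matrix reformulation is exactly what removes this obstacle: it reduces each determinant to multiplying a handful of $2\times2$ matrices whose powers are explicit, so both $\det(M_{uu})=6k-1$ and the minor $3k^2+k-1$ come out by short matrix products. I expect the verification of $MR=0$ and the coprimality step to be entirely mechanical.
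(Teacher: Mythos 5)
Your proposal is correct and follows essentially the same route as the paper's proof: verify $MR=0$ vertex by vertex, identify $|\Phi|$ with $\det(M_{uu})=d_{{\mathcal C}_{2k}}(2,\dots,2,3,3,2,\dots,2)=6k-1$, and establish cyclicity by exhibiting one $(n-2)\times(n-2)$ minor coprime to $6k-1$. The only differences are implementation details: you obtain $|\Phi|=\det(M_{uu})$ from the cofactor identity $M^*=|\Phi|R\,({}^t\!R)$ rather than by the paper's row/column reduction using $R_u=1$, your transfer-matrix computations correctly supply the determinant evaluations the paper leaves to the reader, and you delete the vertex $v_1$ (getting the minor $3k^2+k-1$, with $\gcd(3k^2+k-1,6k-1)=1$) where the paper deletes a diagonal-$3$ vertex (getting $4k-1$, which makes the coprimality with $6k-1$ immediate).
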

  
\begin{proof} It is easy to check  that $MR=0$.
 The computation of $\Phi$ can be done as follows. First, since the first coefficient of $R$ is $1$, we can find an integer linear combination of the columns to add to the first column so that the resulting first column is the zero-column. The same operations on the lines produces a new matrix whose first line is the zero-line and whose first column is the zero-column. 
 Let $M'$ denote the bottom right $2k \times 2k$ square submatrix of this matrix. 
 The group $\Phi$ is obtained by doing a row and column reduction
 of the matrix $M'$. In particular, the determinant of this matrix 
 gives us $|\Phi|$. It follows that
 $|\Phi|=d_{{\mathcal C}_{2k}}(2\dots,2,3,3,2, \dots, 2)=6k-1$. Choose now a row and column of $M'$ where the diagonal element is $3$. To show that the group is cyclic, we consider the submatrix $M''$ of $M'$ obtained by removing the chosen row and column. The matrix $M''$ has determinant $4k-1$. Since $6k-1$ and $4k-1$ are always coprime, we find that $\Phi$ is cyclic.  We leave the details to the reader.
\end{proof}

\begin{remark}  
We note here  an arithmetical structure 
 on ${\mathcal W}_{2k+1}$ very similar to the structure described in Lemma \ref{lem.wheel}, with the exception that this new structure  has its group   $\Phi$ isomorphic to $({\mathbb Z}/(2k+1){\mathbb Z})^2$.

 Order the vertices of ${\mathcal W}_{2k+1}$ as follows. 
As before, the first vertex in our enumeration is $u$, the unique vertex of degree $2k+1$. Then we let $v_1,\dots,v_k, u', w_k,w_{k-1},\dots,w_1$ denote the consecutive vertices on the cycle, all of degree $3$.
The transpose of the vector $R$ is $(1,r_1,\dots,r_k,1,s_k,\dots, s_1)$,
with $r_i=s_i$ for all $i=1,\dots,k$. We set $s_k=1+k$, $s_{k-1}=1+k+(k-1)$, 
and so on, until we get to $s_1=1+k+\dots+2+1=1+ k(k+1)/2$. 
Note that $a:=1+2\sum_{i=1}^k s_i= 1+2k+2\sum_{i=1}^k i^2=1+2k+ k(k+1)(2k+1)/3$.  
 The diagonal of the matrix $M$ of the structure is $(a,2,\dots,2,2k+3,2,\dots,2)$.  It is easy to check  that $MR=0$. We can obtain in a similar way that
 $|\Phi|=d_{{\mathcal C}_{2k+1}}(2\dots,2,2k+3,2, \dots, 2)=(2k+1)^2$.
\end{remark}

\if false
\begin{example}
Computations suggest that when $n=4,5,6$, the wheel $G={\mathcal W}_n$ is such that
$V_{d_G}(2)={\mathbb Z}_{\geq 0}$. 

Note that removing one vertex of $ {\mathcal W}_n$ 
does not produce in general a subgraph $H$ where it is known that $1 \in V_H(2)$. For instance,
when $n=4$, removing one vertex of $ {\mathcal W}_4$ 
gives either the cycle on $4$ vertices, or the cone on the path $A_3$. 
Neither of these graphs $H$ has $1 \in V_{d_H}(2)$. 
\end{example}
\fi
\end{section}

\begin{section}{Third Main Theorem} 

We first recall the following definitions needed for the proof of 
Theorem  \ref{thm.density}, the main theorem of this section. 

\begin{emp}  \label{rem.naturaldensity} \label{def.dense}
Let $S \subset {\mathbb Z}_{\geq 0}$ be any subset. Let ${\displaystyle S(\ell):=\{0,1,2,\ldots ,\ell\}\cap S}$ and ${\displaystyle s(\ell):=|S(\ell)|}$.
Recall that the {\it lower density }  $ \underline{d}(S)$   of $S$  is defined as $$ \underline{d}(S) := \liminf_{\ell \rightarrow \infty} \frac{ s(\ell) }{\ell}.$$ 
Similarly, the upper density $\overline{d}(S) $ of $S$ is given by
$ \overline{d}(S) := \limsup_{\ell \rightarrow \infty} \frac{s(\ell)}{\ell} $.
If both $ \underline{d}(S)$ and $\overline{d}(S) $ exist and are equal, the {\it natural density} $d(S)$ of $S$ is defined as $d(S):=  \underline{d}(S)$.

We say that $S$ is {\it dense in ${\mathbb Z}_{\geq 0}$} if $d(S)=1$. 
When $S$ is dense, its complement in ${\mathbb Z}_{\geq 0}$ has density $0$. 
Any finite set has density $0$. Let $a, b \in {\mathbb Z}_{\geq 0}$, $a \neq 0$. A set of the form 
$\{ am+b \mid m \geq 0\}$  is called an {\it arithmetic progression} and has density $1/a$.

For later use, we note here the following facts. 
Consider a set $S$ of positive integers which contains a union 
$$U:=\bigcup_{i=1}^k \left(\bigcup_{j=1}^{r_i} \{ a_it+b_{ij} \mid t \geq 0\}\right)$$ of arithmetic progressions.  
Then the lower density $\underline{d}(S)$ of $S$ satisfies  $\underline{d}(S) \geq d(U)$. 
When the $a_i$ are pairwise coprime, and for each $i$, the $r_i$ arithmetic progressions are distinct, we find that 
$$d(U)=  1- \prod_{i=1}^k \left(1-\frac{r_i}{a_i}\right).$$
\end{emp}

\begin{theorem} \label{thm.density} Let $G$ be a connected graph.
 Let $v \in G$ be a vertex such that the subgraph $G_v$ has one of the following properties:
\begin{enumerate}[\rm (a)]
\item The complement of $V_{d_{G_v}}(2)$  in ${\mathbb Z}_{\geq 0}$
is finite. 
\item Up to possibly reordering the vertices of $G_v$,  there exist $a_2,\dots,a_{n-1} \in {\mathbb Z}_{\geq 2}$ such that
$d_{G_v}(t,a_2,\dots, a_{n-1})=\alpha t -\beta$ with $\alpha \in {\mathbb Z}_{>0}$, $\beta \in {\mathbb Z}$, and $\gcd(\alpha, \beta)=1$. 
\item  $V_{d_{G_v}}(2)$ 
 contains an infinite subset of pairwise coprime values $\{u_1,u_2,\dots\}$ 
such that $\lim_{j \to \infty} \prod_{i=1}^j (1-1/u_i)=0$.
\end{enumerate}
Then $V_{d_G}(2)$ is dense in ${\mathbb Z}_{\geq 0}$.
\end{theorem}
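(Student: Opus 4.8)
The plan is to reduce all three cases to a single construction of arithmetic progressions inside $V_{d_G}(2)$, and then apply the density estimate recorded in \ref{rem.naturaldensity}. The starting observation is that the variable attached to $v$ enters the critical polynomial linearly. After relabeling so that $v=v_1$, fix diagonal values $a_2,\dots,a_n\geq 2$ at the vertices of $G_v$ and set $N:=M_{G_v}(a_2,\dots,a_n)$. Applying Lemma \ref{lem.matrix} to $M_G(t,a_2,\dots,a_n)$ gives
$$d_G(t,a_2,\dots,a_n)=\det(N)\,t-(^tS)(N^*)S,$$
where $^tS=(s_2,\dots,s_n)$ records the number of edges joining $v$ to each vertex of $G_v$. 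Hence, whenever $m:=d_{G_v}(a_2,\dots,a_n)=\det(N)$ is a positive integer, the values $d_G(t,a_2,\dots,a_n)$ for $t\geq 2$ run through the arithmetic progression $\{mt-c : t\geq 2\}$ of common difference $m$ (with $c:=(^tS)(N^*)S$). Discarding the finitely many negative terms, this progression is a single residue class modulo $m$ contained in $V_{d_G}(2)$; note that no positive-definiteness of $N$ is needed here, since $V_{d_G}(2)$ is defined purely through non-negative polynomial values.

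The key reduction is that it suffices to exhibit an infinite family $u_1,u_2,\dots$ of \emph{pairwise coprime} elements of $V_{d_{G_v}}(2)\cap{\mathbb Z}_{>0}$ with $\lim_{j\to\infty}\prod_{i=1}^{j}(1-1/u_i)=0$. Indeed, realizing each $u_i$ as $\det(N)$ for some admissible tuple and forming the residue class modulo $u_i$ as above, the union of the first $k$ such classes is, up to finitely many terms, a union of arithmetic progressions with pairwise coprime moduli and $r_i=1$ each. By \ref{rem.naturaldensity} its density equals $1-\prod_{i=1}^{k}(1-1/u_i)$, and this is a lower bound for $\underline{d}(V_{d_G}(2))$ for every $k$. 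Letting $k\to\infty$ forces $\underline{d}(V_{d_G}(2))=1$, so $V_{d_G}(2)$ is dense.

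It remains to produce such a family in each case. Case (c) supplies it directly by hypothesis. For case (a), the complement of $V_{d_{G_v}}(2)$ in ${\mathbb Z}_{\geq 0}$ being finite means all but finitely many primes lie in $V_{d_{G_v}}(2)$; primes are pairwise coprime and $\sum_p 1/p$ diverges, so case (a) reduces to case (c). For case (b), some vertex of $G_v$ becomes a linear variable with $d_{G_v}=\alpha t-\beta$ and $\gcd(\alpha,\beta)=1$; by Dirichlet's theorem this progression contains infinitely many primes, and since the reciprocals of the primes in a fixed residue class coprime to the modulus sum to infinity, these primes form a family as required, reducing (b) to (c).

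The remaining verifications are routine: each $u_i$ is attained with all arguments $\geq 2$ (the fixed entries are $\geq 2$ and $t\geq 2$), only finitely many progression terms are dropped to ensure non-negativity, and the density bookkeeping is exactly the formula of \ref{rem.naturaldensity}. The only genuinely non-elementary ingredient, and the place where the main content lies, is the passage in cases (a) and (b) from the given hypothesis to a pairwise coprime family with divergent reciprocal sum: this rests on Dirichlet's theorem together with the divergence of $\sum 1/p$ over a residue class coprime to the modulus. I expect this reduction to case (c) to be the main obstacle, since everything afterward is formal once the linearity in $t$ from Lemma \ref{lem.matrix} is in hand.
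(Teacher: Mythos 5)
Your proposal is correct and follows essentially the same route as the paper's own proof: reduce cases (a) and (b) to case (c) via the divergence of $\sum 1/p$ (using Dirichlet's theorem for (b)), then use the linearity of $d_G$ in the variable at $v$ to produce, for each $u_i$, an arithmetic progression $U_i\subseteq V_{d_G}(2)$ of density $1/u_i$, and conclude from the pairwise coprimality that $\underline{d}(V_{d_G}(2))\geq 1-\prod_{i=1}^{j}(1-1/u_i)\to 1$. Your explicit appeal to Lemma \ref{lem.matrix} and your remark that no positive-definiteness is needed are harmless refinements of the same argument.
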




\begin{proof}
{\it We claim that Assumption (a) implies 
Assumption (c).} Indeed, since the complement of $V_{d_{G_v}}(2)$ is finite, it must contain all but finitely many prime numbers. Listing the prime numbers in the complement as $\{p_i\}_{i=1}^\infty$, we find that $\lim_{j \to \infty} \sum_{i=1}^j 1/p_i $ diverges, since by Euler's theorem the sum of the reciprocals of all primes diverges (\cite{I-R}, page 21), and our set of primes misses only finitely many primes by hypothesis.
Lemma 5.1.1 in \cite{AFG} can be used to deduce that $\lim_{j \to \infty} \prod_{i=1}^j (1-1/p_i)=0 $, as desired. 

\smallskip
{\it We claim that Assumption (b) also implies Assumption (c).} Indeed, our assumption that $\gcd(\alpha, \beta)=1$ allows us to use Dirichlet's Theorem on primes in arithmetic progression: 
The set ${\mathcal P}$ of primes in the arithmetic progression
$\{ \alpha t-\beta \mid t \in {\mathbb Z}_{\geq 0}\}$ is infinite because the Dirichlet density of the primes in ${\mathcal P}$  is equal to $1/\alpha$. This in turn implies that $\sum_{p \in {\mathcal P}} 1/p$ is infinite (see \cite{I-R}, page 251). As before, we conclude using 
Lemma 5.1.1 in \cite{AFG} that  $\lim_{p \in {\mathcal P}} \prod_p (1-1/p)=0 $, as desired.

\smallskip
Let us now prove the theorem when Assumption (c) holds. Without loss of generality, we can assume that $v=v_1$. 
For each value $u_i$, choose $a_{i,2},\dots, a_{i,n} \geq 2$ such that $d_{G_v}(a_{i,2},\dots, a_{i,n})=u_i$. 
Consider the polynomial $d_G(t,a_{i,2},\dots, a_{i,n})$.
By hypothesis, the matrix $M_G(t,a_{i,2},\dots, a_{i,n} )$ has a lower right $n-1 \times n-1$ 
submatrix $ M_{G_v}(a_{i,2},\dots, a_{i,n})$  
of determinant $u_i$.
Thus $d_G(t,a_{i,2},\dots, a_{i,n} ) = u_it-w_i$ for some $w_i \in {\mathbb Z}$. 

Let $U_i$ denote the set of positive values taken by $ u_it-w_i$ when $t \geq 2$ if $w_i \leq 0$, and when 
$t \geq 2+ w_i/u_i$ if $w_i \geq 0$. It is clear that, since up to finitely many values the set
$U_i$ is an arithmetic progression, the density of $U_i$ is $1/u_i$. By construction, $U_i \subseteq V_{d_G}(2)$. 
  Since the elements of the set $\{u_1,u_2,\dots\}$ are pairwise coprime,
we find that 
the  union $\cup_{i=1}^j U_i$ has lower density  equal to $1-\prod_{i=1}^j (1-1/u_i)$. 
Clearly, $V_{d_G}(2)$ contains $\cup_{i=1}^\infty U_i$,
and taking now the limiting value as $j \to \infty$, we find that $\underline{d}(V_{d_G}(2)) \geq 1$. 
It follows that $d(V_{d_G}(2)) = 1$ and $V_{d_G}(2)$ is dense in ${\mathbb Z}_{\geq 0}$.
\end{proof}

In our next corollary, the symbols $A_n$ and $D_n$ denote Dynkin diagrams, whose definition is  recalled in \ref{def.Dynkin}.
\begin{corollary} \label{cor.density}
Suppose that a graph $G$ on $n+1$ vertices contains an induced subgraph $H$ on $n$ vertices  of the form $G_v$ such that
$H$ is either $A_n$, ${\mathcal C}_n$,  $D_n$, ${\mathcal S}_n$, ${\mathcal K}_n$, or ${\mathcal K}(2,n-2)$.
Then $V_{d_G}(2)$ is dense in ${\mathbb Z}_{\geq 0}$.
\end{corollary}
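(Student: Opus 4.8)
The plan is to apply Theorem \ref{thm.density} with the given vertex $v$, for which $G_v = H$ by hypothesis. It then suffices to show that each of the six graphs $H$ on the list satisfies one of the conditions (a), (b), (c) of that theorem, and I would verify condition (b) in every case. By Lemma \ref{lem.matrix}, once we single out a vertex $w$ of $H$ to carry the variable $t$ and fix the remaining $n-1$ diagonal entries to values $a_i \in {\mathbb Z}_{\geq 2}$, the critical polynomial is linear in $t$: we have $d_H(t, a_2, \dots, a_n) = \alpha t - \beta$ with $\alpha = \det(M_{H-w})$ and $\beta = {}^{t}T\,(M_{H-w})^{*}\,T$, where $T$ encodes the edges joining $w$ to $H - w$. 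Equivalently $\beta = 2\alpha - d_H(2, a_2, \dots, a_n)$. The entire problem thus reduces to choosing $w$ and the $a_i \geq 2$ so that $\alpha > 0$ and $\gcd(\alpha, \beta) = 1$.

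For the trees this is quick. For $A_n$, placing $t$ at an endpoint and all other entries equal to $2$ gives, by the continuant recursion, $\alpha = n$ and $\beta = n-1$, hence $\gcd(\alpha, \beta) = 1$. For $D_n$ the all-$2$ choice fails (it yields $\gcd(n, 2n-4) = \gcd(n, 4)$), so I would put $t$ at one of the two leaves adjacent to the degree-$3$ vertex and assign the other such leaf the value $a = n-1$, all remaining entries being $2$; a direct expansion then gives $\alpha = a(n-1) - (n-2)$ and $\beta = a(n-2)$, whence $\alpha - \beta = a - (n-2) = 1$ and coprimality is immediate. For the star ${\mathcal S}_n$, I would place $t$ at the center and give the leaves pairwise coprime values $a_i \geq 2$; then $\alpha = \prod_i a_i$ and $\beta = \sum_i \prod_{j \neq i} a_j$, and pairwise coprimality of the $a_i$ forces $\gcd(\alpha, \beta) = 1$.

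The cycle ${\mathcal C}_n$ is handled the same way: with $t$ at a vertex, one of its two neighbours set to $3$, and every other vertex set to $2$, a path-determinant computation gives $\alpha = 2n - 1$ and $\beta = 3n - 2$, and the relation $3(2n-1) - 2(3n-2) = 1$ yields $\gcd(\alpha, \beta) = 1$. For the two remaining families I write $c_i = a_i + 1$. Expanding $\det\big({\rm Diag}(c_i) - J\big)$ gives, for ${\mathcal K}_n$ with $t$ at one vertex, $\alpha = \prod_{i \geq 2} c_i - \sum_{i\geq 2}\prod_{j \neq i} c_j$ and $\beta = \sum_{i \geq 2}\prod_{j \neq i} c_j$; a Schur-complement computation on the size-$(n-2)$ diagonal block gives the analogous $\alpha = bY - S_Y$, $\beta = bS_Y$ for ${\mathcal K}(2, n-2)$, where $b$ is the value at the second small-side vertex, $Y = \prod_j y_j$, and $S_Y = \sum_j \prod_{k \neq j} y_k$. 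In both cases, choosing the $c_i$ (respectively the $y_j$, together with $b$ taken coprime to $S_Y$) to be pairwise coprime makes $\gcd(\alpha, \beta) = 1$ by the same reciprocity argument as for the star.

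The one point that needs genuine care, and the main obstacle, is positivity of the leading coefficient in these last two families: $\alpha = \prod_i c_i\,(1 - \sum_i 1/c_i)$ for ${\mathcal K}_n$ (and $\alpha = Y(b - \sum_j 1/y_j)$ for the bipartite case), which is positive only when the relevant sum of reciprocals is less than $1$. This rules out the small pairwise coprime values (such as the first odd primes) that would minimize the bookkeeping, since their reciprocals can sum past $1$. I would resolve this by using the fact that condition (b) imposes no upper bound on the $a_i$: taking the $c_i$ to be $n-1$ distinct primes all exceeding $2(n-1)$ keeps them pairwise coprime while forcing $\sum_i 1/c_i < 1/2$, so that $\alpha > 0$ and $\gcd(\alpha, \beta) = 1$ hold simultaneously. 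Having verified condition (b) for all six types of $H$, Theorem \ref{thm.density} applies and shows $V_{d_G}(2)$ is dense in ${\mathbb Z}_{\geq 0}$.
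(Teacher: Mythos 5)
Your proposal is correct and follows the same strategy as the paper's own proof: verify hypothesis (b) of Theorem \ref{thm.density} for each of the six graphs by exhibiting an explicit diagonal making $d_H(t,a_2,\dots,a_n)=\alpha t-\beta$ with $\alpha>0$ and $\gcd(\alpha,\beta)=1$. Your choices for $A_n$, ${\mathcal C}_n$, and ${\mathcal S}_n$ are the paper's (pairwise coprime leaf values for the star, versus the paper's distinct primes, is an immaterial generalization); for $D_n$ you put $t$ and the value $n-1$ at the two short leaves and get $\alpha-\beta=1$, where the paper instead uses $(t,2,\dots,2,3)$ with $t$ at the far end of the long branch, obtaining $(n+3)t-(n+2)$ --- both are valid.

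The one genuine difference is your treatment of positivity for ${\mathcal K}_n$, and it is not excess caution: it repairs an actual oversight in the paper. The paper's proof takes $b_2=3$ and $b_i$ equal to the $i$-th prime, so that $\alpha=(b_2\cdots b_n)\bigl(1-\sum_{i=2}^n 1/b_i\bigr)$; but $1/3+1/5+1/7+\dots+1/29\approx 1.03>1$, so for $n\geq 10$ this $\alpha$ is negative and hypothesis (b) fails. Positivity of $\alpha$ is genuinely needed there, since the proof of Theorem \ref{thm.density} extracts infinitely many primes (hence positive values in $V_{d_{G_v}}(2)$) from the progression $\{\alpha t-\beta\}$. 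Your choice of $n-1$ primes exceeding $2(n-1)$ keeps the reciprocal sum below $1/2$ and makes the argument valid for all $n$. The same quiet issue affects ${\mathcal K}(2,n-2)$: the paper's free parameter $x$ must also be taken larger than $\sum_j 1/y_j$, which is possible (it is only constrained to be coprime to $\sum_j\prod_{k\neq j}y_k$) but left unstated, and your version handles it explicitly.
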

\begin{proof}
We exhibit for each graph $H$ a choice of diagonal elements $(t,a_2,\dots,a_n)$ with $a_i \geq 2$ such that 
$d_H(t,a_2,\dots,a_n)=\alpha t-\beta$ with $\gcd(\alpha, \beta)=1$.
We can then use Theorem \ref{thm.density} (b) to conclude that $V_{d_G}(2)$ is dense in ${\mathbb Z}_{\geq 0}$.

\smallskip
(i) $H=A_n$. Assume that $v_1$ is a vertex of degree $1$. 
We find that $$d_H(t,2,\dots,2)=nt-(n-1).$$ Clearly $\gcd(n,n-1)=1$.

\smallskip
(ii) $H={\mathcal C}_n$.  Label the vertices consecutively along the cycle. 
Recall that $n$ is the number of spanning trees of $H$, and it follows from 
the proof of Proposition \ref{pro.arithmeticalstructure} (a) that $d_H(t,2,\dots,2,2)=n(t-2)$.
We find that 
$$d_H(t,2,\dots,2,3)=n(t-2)+ (n-1)t-(n-2)= (2n-1)t-(3n-2).$$
Again, we find that $\gcd(2n-1, 3n-2)=1$.

\smallskip
(iii) $H=D_n$. This star-shaped graph has a single vertex of degree $3$, and three terminal chains, each ending with a vertex of degree $1$. Assume that $v_1$ is the vertex of degree $1$ on the chain of length $n-3$ in $H$, and that $v_n$ is another vertex of degree $1$. 
We find that $$d_H(t,2,\dots,2,2,3)=(n+3)t-(n+2),$$ and clearly $\gcd(n+3,n+2)=1$. To compute the coefficient of $t$ in this expression, 
note that it equals $d_{H_{v_1}}(2,\dots,2,3)$, which is $4+(n-1)$, with $4=d_{D_{n-1}}(2,\dots,2)$ and $n-1=d_{A_{n-2}}(2,\dots,2)$. The constant coefficient is similarly obtained as $4+(n-2)$.

\smallskip
(iv) $H={\mathcal S}_n$.  Assume that the vertex $v_1$ has degree $n-1$. We     use $(t,a_2,\dots,a_n)$ with $a_2=2$ and $a_{i+1}=i$-th prime number. We find that
 $$d_H(t,a_2,\dots,a_n)= (a_2 \cdots a_n)t-(a_2 \cdots a_n)(\frac{1}{a_2}+\dots + \frac{1}{a_n}) .$$
Since $a_2, \dots,a_n$ are distinct primes, we find that
 $ (a_2 \cdots a_n)(\frac{1}{a_2}+\dots + \frac{1}{a_n})$
  is coprime to $(a_2 \cdots a_n)$.

\smallskip
(v) $H={\mathcal K}_n$. Set $a_i+1=b_i$ and choose 
 $a_2+1=3$, and $a_{i}+1=i$-th prime number. 
We find using \eqref{det} that
$$
\begin{array}{rl}
d_H(t,a_2,\dots,a_n)= &
(t+1)(b_2 \cdots b_n)- (t+1)(b_2 \cdots b_n)(\sum_{i=2}^n \frac{1}{b_i})- (b_2 \cdots b_n)\\
= & t(b_2 \cdots b_n)(1-\sum_{i=2}^n \frac{1}{b_i})-(b_2 \cdots b_n)(\sum_{i=2}^n \frac{1}{b_i}).
\end{array}
$$
 Since $b_2, \dots,b_n$ are distinct primes,
 $(b_2 \cdots b_n)$ is coprime to $ (b_2 \cdots b_n)(\frac{1}{b_2}+\dots + \frac{1}{b_n})$.
 
 \smallskip
(vi) $H={\mathcal K}(2,n-2)$. We  compute the determinant of 
$$
M=M_H(t,x,y_1,\dots, y_{n-2})=\left(
\begin{array}{ccccc}
t & 0 & -1& \dots & -1  \\
0 & x& -1& \dots &   -1 \\
-1 & -1 & y_1 & 0  & 0 \\
\vdots & \vdots  &0 & \ddots  & 0 \\
-1 & -1& 0 &   & y_{n-2} \\
\end{array}
\right).
$$ 
Subtracting the second row from the first, and the resulting second column from the first, we obtain
$$
\left(
\begin{array}{ccccc}
t+x & -x & 0& \dots & 0  \\
-x & x& -1& \dots &   -1 \\
0 & -1 & y_1 & 0  & 0 \\
\vdots & \vdots  &0 & \ddots  & 0 \\
0 & -1& 0 &   & y_{n-2} \\
\end{array}
\right).
$$ 
Expanding the determinant along the first row gives
$$
\det(M)= 
\left(xy_1\cdots y_{n-2} - y_1\cdots y_{n-2}(\sum_{i=1}^{n-2} \frac{1}{y_i})\right)t 
-x y_1\cdots y_{n-2}(\sum_{i=1}^{n-2} \frac{1}{y_i}) .
$$
We choose $y_1,\dots, y_{n-2}$ to be the first $n-2$  prime numbers. We choose $x$ to be coprime to 
$y_1\cdots y_{n-2}(\sum_{i=1}^{n-2} \frac{1}{y_i})$.
\end{proof}

\begin{emp} \label{proof.{thm.denselist}}
{\it Proof of Theorem \ref{thm.denselist}.}
 We proceed with a case-by-case verification.
 
(1) Let $G$ be a Dynkin diagram, an extended Dynkin diagram, or the cone $C(A_3)$. Then $G$ contains an induced subgraph of the form $G_v=A_n$ for some $n$. Corollary \ref{cor.density} applies.

(2) Let $G$ be a complete graph ${\mathcal K}_n$, or an extended complete graph ${\mathcal K}_n^+$.
Then $G$ contains an induced subgraph of the form $G_v={\mathcal K}_{n-1}$ (resp.\ ${\mathcal K}_{n}$). Corollary \ref{cor.density} applies.
Let $G$ be a cycle ${\mathcal C}_n$,  an extended cycle ${\mathcal C}_n^+$, or the wheel ${\mathcal W}_n$.
Then $G$ contains an induced subgraph of the form $G_v=A_n$ (resp.\ $A_{n+1}$, resp.\ ${\mathcal C}_n$). Corollary \ref{cor.density} applies.

(3) Let $G$ be a star ${\mathcal S}_n$,   an extended star ${\mathcal S}_n^+$, or the complete bipartite ${\mathcal K}(2,n)$ or ${\mathcal K}(3,n)$. Then $G$ contains an induced subgraph of the form $G_v={\mathcal S}_{n-1}$ (resp.\ ${\mathcal S}_n$, resp.\ ${\mathcal S}_{n+1}$, resp.  ${\mathcal K}(2,n)$). Corollary \ref{cor.density} applies.
 
 \if false
We have $V_{d_G}(2)\supseteq {\mathbb Z}_{> 0}$ when $G$ is
${\mathcal K}_n$, $n\geq 14$ (see Corollary \ref{cor.complete}).

  or $G={\mathcal K}_n^+$, $n\geq 4$ (see Corollary \ref{cor.completeextended}),
   or $G={\mathcal C}_n^+$, $n\geq 8$ (see Proposition \ref{cor.tadpole}),
   or ${\mathcal W}_n$, $n\geq 4$ (see Corollary \ref{cor.cone-wheel} (b)). 
\fi
 
\end{emp}

It is natural to wonder whether a vertex $v$ such that Hypothesis (b) in Theorem \ref{thm.density} holds might exist for all graphs. 
We have not been able to answer this question beyond the following extension result.

\begin{lemma} Suppose that $G$ is a graph on $n$ vertices such that for some choice of 
$a_2,\dots, a_n \geq 2$, we have $d_G(t,a_2,\dots, a_n)=\alpha t -\beta$ with 
$\gcd(\alpha,\beta)=1$. Let $G^+$ denote the graph obtained by attaching a new vertex $v_0$ with $e \geq 1$ edges to the vertex $v_1$. 
Let $q \geq 1$ denote any integer  coprime to $e\alpha\beta$. 
Then $d_{G^+}(q,t,a_2,\dots, a_n)$
is of the form $\alpha't - \beta'$ with $\gcd(\alpha', \beta')=1$. 
\end{lemma}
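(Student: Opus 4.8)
The plan is to expand $\det M_{G^+}(q,t,a_2,\dots,a_n)$ by listing the new vertex $v_0$ first, so that the matrix takes exactly the shape treated in Lemma \ref{lem.matrix}. Ordering the vertices as $v_0,v_1,\dots,v_n$, the matrix $M_{G^+}(q,t,a_2,\dots,a_n)$ has $q$ in its top-left entry, off-diagonal first-row (and first-column) vector $^t T=(-e,0,\dots,0)$ — since $v_0$ meets $G$ only through the $e$ edges to $v_1$ — and lower-right $n\times n$ block $N:=M_G(t,a_2,\dots,a_n)$. Lemma \ref{lem.matrix} then gives immediately
$$\det M_{G^+}(q,t,a_2,\dots,a_n)=q\det(N)-(^t T)(N^*)T=q\det(N)-e^2(N^*)_{11}.$$

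The next step is to express the two terms on the right through $\alpha$ and $\beta$. By hypothesis $\det(N)=d_G(t,a_2,\dots,a_n)=\alpha t-\beta$. For the second term I would observe that $(N^*)_{11}=\det(N_{11})$ is precisely the coefficient of $t$ in $\det(N)$: the variable $t$ occurs in $N$ only in the $(1,1)$ entry, so the $(1,1)$-cofactor of $N$ equals $\partial_t\det(N)=\alpha$ (equivalently, apply Lemma \ref{lem.matrix} to $N$ itself). Substituting yields
$$d_{G^+}(q,t,a_2,\dots,a_n)=q(\alpha t-\beta)-e^2\alpha=(q\alpha)\,t-(q\beta+e^2\alpha),$$
so $\alpha'=q\alpha$ and $\beta'=q\beta+e^2\alpha$ are the required integers.

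It then remains to check $\gcd(\alpha',\beta')=\gcd\!\left(q\alpha,\;q\beta+e^2\alpha\right)=1$, and this is the one place the hypothesis $\gcd(q,e\alpha\beta)=1$ enters. I would argue by contradiction: suppose a prime $p$ divides both $q\alpha$ and $q\beta+e^2\alpha$, and split according to whether $p\mid q$ or $p\mid\alpha$. If $p\mid q$, then $\gcd(q,e\alpha\beta)=1$ forces $p\nmid e\alpha$, while $p\mid q\beta+e^2\alpha$ and $p\mid q\beta$ give $p\mid e^2\alpha$, a contradiction. If $p\mid\alpha$, then $p\mid e^2\alpha$ forces $p\mid q\beta$; but $\gcd(\alpha,\beta)=1$ gives $p\nmid\beta$ and $\gcd(q,\alpha)=1$ gives $p\nmid q$, again a contradiction. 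Since every prime factor of $q\alpha$ divides $q$ or $\alpha$, this proves coprimality.

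I do not anticipate a serious obstacle: once the matrix is arranged so that Lemma \ref{lem.matrix} applies, the determinant evaluation is one line and the rest is bookkeeping. The only genuinely load-bearing observation is the identification $(N^*)_{11}=\alpha$ — that the constant $\det M_{G_{v_1}}(a_2,\dots,a_n)$ coincides with the leading coefficient of $d_G(t,a_2,\dots,a_n)$ — after which the main care is simply to track which factor of $e\alpha\beta$ each branch of the gcd argument consumes.
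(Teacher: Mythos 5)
Your proof is correct and follows essentially the same route as the paper: the paper's own argument is the one-line ``explicit computation'' giving $d_{G^+}(q,t,a_2,\dots,a_n)=q(\alpha t-\beta)-e^2\alpha=q\alpha t-(q\beta+e^2\alpha)$, followed by the observation that the hypotheses force $\gcd(q\alpha,\,q\beta+e^2\alpha)=1$. You have merely made both steps explicit, deriving the determinant identity from Lemma \ref{lem.matrix} together with the identification $(N^*)_{11}=\alpha$, and spelling out the prime-by-prime case analysis for the coprimality, all of which is sound.
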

\begin{proof} An explicit computation shows that
 $d_{G^+}(q,t,a_2,\dots, a_n)=q(\alpha t -\beta)-e^2\alpha=q\alpha t -(q\beta+e^2\alpha)$.
 It follows from our hypotheses that $q\alpha$ is coprime to
 $(q\beta+e^2\alpha)$.
 \end{proof}
 \begin{remark}
The connected simple graphs on four vertices consist of the path
$ A_4$, the star $ D_4$, the two graphs  $ {\mathcal C}_3^+$  and $ {\mathcal C}_4$
with Betti number equal to $1$,  the cone on $A_3$, and the complete graph ${\mathcal K}_4$.
Theorem \ref{thm.denselist} and Corollary \ref{cor.density} show that for such graph $G$, the set $V_{d_G}(2)$ is dense in ${\mathbb Z}_{\geq 0}$.
Computations suggest that except possibly for 
 ${\mathcal K}_4$, the complement of $V_{d_G}(2)$ in ${\mathbb Z}_{\geq 0}$ might be finite. 
 \end{remark} 
 \end{section}
 
\begin{section}{Dynkin diagrams} \label{sec.Dynkin}

\begin{emp} \label{def.Dynkin}
Recall  the following terminology.

\smallskip
$A_n$, $n\geq 2$: the {\it path} (or {\it chain}), on $n$ vertices.
\smallskip



$\tilde{D_n}$, $n\geq 4$: a chain on $n-1$ vertices, with two additional vertices, attached to the two vertices of degree $2$ of the chain that are linked to a vertex of degree $1$ (when $n=4$, there is only one such vertex, and in this case both new vertices are attached to this vertex).
The graph $D_n$, $n\geq 4$, is obtained
from  $\tilde{D_n}$ by removing one of the two additional vertices. Such vertex is indicated in white below.

$$
 \tilde{D_n} \quad
\begin{tikzpicture}
[node distance=0.8cm, font=\small] 
\tikzstyle{vertex}=[circle, draw, fill, inner sep=0mm, minimum size=0.8ex]
\node[vertex, fill=none]	(v1)  	at (0,1) 	[label=below:{}] 		{};
\node[vertex]	(v33)			[right of=v1, label=below:{}]	{};
 \node[vertex]	(w)			[above of=v33, label=above:{}]	{};
\node[vertex]	(v44)			[right of=v33, label=below:{}]	{};
\node[vertex]	(v2)			[right of=v44, label=below:{}]	{};
\node[vertex]	(v3)			[right of=v2, label=below:{}]	{};
\node[vertex]	(v)			[above of=v3,  label=right:{ ${}$}]	{};
\node[vertex]	(v4)			[right of=v3, label=below:{}]	{};
\draw [thick] (v1)--(v33)--(v44);
\draw [thick] (v2)--(v3)--(v4);
\draw [thick] (v)--(v3);
\draw [thick, dotted] (v44)--(v2);
\draw[thick] (v33)--(w);
\end{tikzpicture}
$$

$\tilde{E_n}$, $n=6,7,8$: a tree on $n+1$ vertices described below. Removing one vertex from $\tilde{E_n}$
produces the tree $E_n$ on $n$ vertices. The vertex that needs to be removed is marked in white below.

\if false
We draw below the graphs $\tilde{D_n}$, $n\geq 5$, and $\tilde{E_n}$, $n=6,7,8$, marking one vertex in white on each graph. Removing the marked vertex along with the associated edge, we obtain
the graphs $ D_n$, $n\geq 5$, and $E_n$, $n=6,7,8$, respectively.
\fi
 

$$
\begin{array}{cc}
\tilde{E_6} \quad
\begin{tikzpicture}
[node distance=0.8cm, font=\small] 
\tikzstyle{vertex}=[circle, draw, fill, inner sep=0mm, minimum size=0.8ex]
\node[vertex, fill=none]	(v1)  	at (0,0) 	[label=below:{}] 		{};
\node[vertex]	(v33)			[right of=v1, label=below:{}]	{}; 
\node[vertex]	(v44)			[right of=v33, label=below:{}]	{};
\node[vertex]	(v2)			[right of=v44, label=below:{}]	{};
\node[vertex]	(v3)			[right of=v2, label=below:{}]	{};
\node[vertex]	(v)			[above of=v44,  label=right:{ ${}$}]	{};
\node[vertex]	(v5)			[right of=v, label=below:{}]	{};
\draw [thick] (v1)--(v2)--(v3);
\draw [thick] (v)--(v44);
\draw[thick] (v)--(v5);
\end{tikzpicture}
 
& \quad \quad \quad
 
\tilde{E_7} \quad
\begin{tikzpicture}
[node distance=0.8cm, font=\small] 
\tikzstyle{vertex}=[circle, draw, fill, inner sep=0mm, minimum size=0.8ex]
\node[vertex, fill=none]	(v1)  	at (0,0) 	[label=below:{}] 		{};
\node[vertex]	(v33)			[right of=v1, label=below:{}]	{};
 
\node[vertex]	(v44)			[right of=v33, label=below:{}]	{};
\node[vertex]	(v2)			[right of=v44, label=below:{}]	{};
\node[vertex]	(v3)			[right of=v2, label=below:{}]	{};
\node[vertex]	(v)			[above of=v2,  label=right:{ ${}$}]	{};
\node[vertex]	(v4)			[right of=v3, label=below:{}]	{};
\node[vertex]	(v5)			[right of=v4, label=below:{}]	{};
\draw [thick] (v1)--(v2)--(v3)--(v4);
\draw [thick] (v)--(v2);
\draw[thick] (v4)--(v5);
\end{tikzpicture}
\end{array}
$$

$$
\tilde{E_8} \quad
\begin{tikzpicture}
[node distance=0.8cm, font=\small] 
\tikzstyle{vertex}=[circle, draw, fill, inner sep=0mm, minimum size=0.8ex]
\node[vertex, fill=none]	(v1)  	at (0,0) 	[label=below:{}] 		{};
\node[vertex]	(v22)		[right of=v1, label=below:{}]	{};
\node[vertex]	(v33)			[right of=v22, label=below:{}]	{};
 
\node[vertex]	(v44)			[right of=v33, label=below:{}]	{};
\node[vertex]	(v2)			[right of=v44, label=below:{}]	{};
\node[vertex]	(v3)			[right of=v2, label=below:{}]	{};
\node[vertex]	(v)			[above of=v3,  label=right:{ ${}$}]	{};
\node[vertex]	(v4)			[right of=v3, label=below:{}]	{};
\node[vertex]	(v5)			[right of=v4, label=below:{}]	{};
\draw [thick] (v1)--(v22);
\draw [thick] (v22)--(v33)--(v44)--(v4);
\draw [thick] (v)--(v3);
\draw[thick] (v4)--(v5);
\end{tikzpicture}
$$
The graphs $A_n$ $(n \geq 2)$, $D_n$ $(n\geq 4)$, and $E_n$,  $n=6,7,8$, are called {\it Dynkin  diagrams} and have $n$ vertices.
The graphs $\tilde{D_n}$, $n\geq 4$, and $\tilde{E_n}$, $n=6,7,8$, are called {\it extended Dynkin diagrams} or {\it affine Dynkin diagrams}, and have $n+1$ vertices. In the context of elliptic curves, they are called {\it Kodaira types}, and are denoted by 
${\rm I}_n^*$, $n \geq 0$, and ${\rm IV}^*, {\rm III}^*$, and ${\rm II}^*$, respectively (see for instance \cite{Tat}, page 46). The notation ${\rm I}_n$ refers in this context to the cycle ${\mathcal C}_n$.
\end{emp}

The following proposition is 
well-known. 
Part (a) is found in \cite{Dur}, Lemma 3.1, with proof  and a reference to \cite{Hir}, Satz page 219. Part (b) is stated in \cite{Hir}, page 228. 

\begin{proposition} \label{pro.DynkinAll}
Let $G$ be a connected graph on $n$ vertices. 
\begin{enumerate}[\rm (a)]
\item  If $M_G(2,\dots,2)$ is positive definite, then $G$ is either $A_n$ with $n \geq 2$, $D_n$ with $n \geq 4$, or $E_n$, $n=6,7,8$. 
\item  If $M_G(2,\dots,2)$ is positive semi-definite and $\det M_G(2,\dots,2)=0$, then $G$ is either ${\mathcal C}_n$ with $n \geq 2$, $\tilde{D_n}$ with $n \geq 4$, or $\tilde{E_n}$, $n=6,7,8$. 
\end{enumerate}
\end{proposition}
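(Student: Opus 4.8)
The plan is to reinterpret the hypotheses spectrally. Since $M_G(2,\dots,2)=2\,\mathrm{Id}_n-A_G$, this matrix is positive definite exactly when the largest eigenvalue $\lambda_{\max}(G)$ of the adjacency matrix $A_G$ satisfies $\lambda_{\max}(G)<2$, and it is positive semi-definite with $\det=0$ exactly when $\lambda_{\max}(G)=2$. Because $G$ is connected, $A_G$ is an irreducible nonnegative matrix, so Perron--Frobenius theory applies: $\lambda_{\max}(G)$ is a simple eigenvalue with a strictly positive eigenvector, and $\lambda_{\max}$ is \emph{strictly monotone}, in the sense that if $H$ is a connected proper subgraph of a connected graph $G$, then $\lambda_{\max}(H)<\lambda_{\max}(G)$. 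These two facts are the only analytic input I would use; everything else is combinatorial.

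First I would record the role played by the graphs of \ref{def.Dynkin}. For each of ${\mathcal C}_n$ $(n\ge 2)$, $\tilde{D_n}$ $(n\ge 4)$ and $\tilde{E_n}$ $(n=6,7,8)$, the well-known Coxeter marks furnish an explicit positive integer vector $R$ with $A_G R=2R$ (for instance $R=(1,\dots,1)$ for ${\mathcal C}_n$). By Perron--Frobenius the existence of a positive eigenvector forces $\lambda_{\max}=2$ for each of these graphs, so each is positive semi-definite of corank exactly $1$, which is the setting of \ref{emp.classical} (b). Combined with strict monotonicity this yields the decisive \emph{minimality} statement: each of these graphs has $\lambda_{\max}=2$, while every connected proper subgraph of it has $\lambda_{\max}<2$ and is therefore positive definite. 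In particular each Dynkin diagram $A_n,D_n,E_n$, being a connected proper subgraph of a corresponding ${\mathcal C}_n$, $\tilde{D_n}$, or $\tilde{E_n}$, satisfies $\lambda_{\max}<2$; this already proves one inclusion in part (a).

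The heart of the argument is the converse classification, which I would carry out by successively ruling out configurations, using the minimal graphs above as forbidden subgraphs. Assume $\lambda_{\max}(G)\le 2$. A double edge yields ${\mathcal C}_2$ and an edge of multiplicity $\ge 3$ already forces $\lambda_{\max}\ge 3$, so $G$ is simple unless $G={\mathcal C}_2$. If $G$ contains a cycle it contains a chordless cycle ${\mathcal C}_m$ $(m\ge 3)$; since $\lambda_{\max}({\mathcal C}_m)=2$, monotonicity forces $G={\mathcal C}_m$, so otherwise $G$ is a tree. In the tree case, a vertex of degree $\ge 4$ contains the star $\tilde{D_4}={\mathcal K}(1,4)$, and two vertices of degree $3$ produce some $\tilde{D_m}$ (take the two off-path neighbours at each branch vertex together with the unique path joining them); hence either $G$ equals one of these $\tilde{D_n}$, or $G$ is a tree with at most one vertex of degree $3$, that is, a path $A_n$ or a three-armed tree $T_{a,b,c}$ with arm-lengths $a\le b\le c$ meeting at a single degree-$3$ vertex. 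Finally I would bound the arms against $\tilde{E_6}=T_{2,2,2}$, $\tilde{E_7}=T_{1,3,3}$ and $\tilde{E_8}=T_{1,2,5}$: if $a\ge 2$, or $a=1,\,b\ge 3$, or $a=1,\,b=2,\,c\ge 5$, then $G$ contains the corresponding extended diagram, forcing equality with it; the remaining cases $T_{1,1,c}=D_{c+3}$ and $T_{1,2,c}$ with $c\in\{2,3,4\}$ equal to $E_6,E_7,E_8$ are exactly the Dynkin diagrams. Sorting the outcomes by the value of $\lambda_{\max}(G)$ then splits them cleanly: $\lambda_{\max}(G)<2$ gives $A_n,D_n,E_n$ (part (a)), while $\lambda_{\max}(G)=2$ gives ${\mathcal C}_n,\tilde{D_n},\tilde{E_n}$ (part (b)), since in the latter case monotonicity forces $G$ to coincide with whichever minimal diagram it contains.

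The main obstacle is bookkeeping rather than conceptual: one must check that every way a tree acquires a second branch vertex or an over-long arm genuinely exhibits one of $\tilde{D_n},\tilde{E_6},\tilde{E_7},\tilde{E_8}$ as a subgraph, and one must verify that the quoted Coxeter marks really are $2$-eigenvectors of $A_G$ for each extended diagram. The former is the delicate step, because ``$\tilde{D_m}$ as a subgraph'' has to be extracted for arbitrary arm-lengths; the cleanest way is to use that in a tree the path between two branch vertices is unique and disjoint from the chosen pendant neighbours, so the extracted subtree is automatically of type $\tilde{D}$.
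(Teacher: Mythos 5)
Your proof is correct, but it is not the route the paper takes: the paper gives no argument for Proposition \ref{pro.DynkinAll} at all, recording it as well-known and citing \cite{Dur}, Lemma 3.1, for part (a) and \cite{Hir} for part (b) (the source file does contain an unprinted, commented-out sketch, which proceeds combinatorially: positivity is inherited by principal submatrices, so one rules out multiple edges, short induced cycles, vertices of degree $\geq 4$, and second branch vertices by computing determinants of small minors, and then grows the admissible trees one vertex at a time). Your argument is instead the spectral one, essentially Smith's classification of connected graphs with $\lambda_{\max}(A_G)\le 2$: translate definiteness of $2\,\mathrm{Id}_n-A_G$ into $\lambda_{\max}<2$, resp.\ $\lambda_{\max}=2$; use the Coxeter marks and Perron--Frobenius to see that ${\mathcal C}_n$, $\tilde{D_n}$, $\tilde{E_n}$ have $\lambda_{\max}=2$; and use strict monotonicity of the spectral radius under proper connected subgraphs both to make these diagrams \emph{minimal} (any graph with $\lambda_{\max}\le 2$ properly containing one is impossible, so containment forces equality) and to place every Dynkin diagram strictly below $2$. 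Your tree bookkeeping ($T_{2,2,2}=\tilde{E_6}$, $T_{1,3,3}=\tilde{E_7}$, $T_{1,2,5}=\tilde{E_8}$, with the complementary cases being exactly $A_n$, $D_n$, $E_n$) is complete, and the multigraph case is handled cleanly by ${\mathcal C}_2$ and triple edges. The trade-off between the two approaches: yours is short, self-contained, and treats (a) and (b) uniformly from two analytic inputs; the determinantal route of \cite{Dur} and of the paper's hidden sketch stays entirely within integral quadratic forms, which matters here because the paper defines positive (semi-)definiteness via integer vectors $X\in{\mathbb Z}^n$ --- your spectral translation tacitly uses that for a symmetric integer matrix this is equivalent to positivity over ${\mathbb R}^n$ (clear for semi-definiteness by homogeneity and density, and for definiteness because the kernel of a rational positive semi-definite matrix is a rational subspace, hence contains an integer vector if nonzero), a standard point that deserves one explicit sentence in your write-up.
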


\if false
\begin{proof} We leave it to the reader to check that the matrices $M_G(2,\dots,2)$
of the graphs listed in (a) (resp.\ in (b)) are positive definite (resp.\ positive semi-definite). 

Let $G$ be a graph such that  $M:=M_G(2,\dots,2)$ is positive semi-definite. 
We claim that unless $G={\mathcal C}_2$, the graph $G$ can only have at most one edge between any two vertices. Indeed, suppose that $v_1$ and $v_2$ are linked by $ a\geq 1$ edges. Then $M$ contains the submatrix 
$$ \left(
\begin{array}{cc}
2 & -a \\
-a & 2
\end{array}
\right),
$$
whose determinant is $4-a^2$. Since we need $4-a^2>0$ when $G\neq {\mathcal C}_2$,
we find that $a \leq 1$.

The graph $G$ cannot contain any induced subgraph isomorphic to ${\mathcal C}_3$ unless $G={\mathcal C}_3$.

Suppose now that $G$ contains a vertex of degree at least $3$. 
Then  since this vertex cannot be part of a cycle ${\mathcal C}_3$ in $G$ which is an induced subgraph of $G$, 
 it must be part of a star $D_4$ which is an induced subgraph of $G$.

Let us now consider the case where the vertex has degree at least $4$. Again,  since this vertex cannot be part of a cycle ${\mathcal C}_3$ in $G$ which is an induced subgraph of $G$, 
 it must be part of  an induced subgraph isomorphic to $\tilde{D_4}$. In this case, we must have $G=\tilde{D_4}$.

\if false
We claim now that unless $G=\tilde{D_4}$, the degree of any vertex of $G$ is at most $3$.
Indeed, if $G$ has a vertex of degree at least $4$, then $M$ contains a submatrix
of the form 
$$ \left(
\begin{array}{cccc}
2 & -1 & -1 & -1 \\
-1 & 2 & 0& 0\\
-1 &0 & 2 & 0 \\
-1 &0 &0 & 2 \\
\end{array}
\right),
$$

which has determinant $0$. This is not possible unless $M$ is equal to this matrix, in which case $G=\tilde{D_4}$.
\fi

Suppose that $G$ contains two distinct vertices of degree $3$. Then $G$ must contain a subgraph of the form $\tilde{D_n}$ for some $n \geq 5$, or must contain a cycle ${\mathcal C}_n$ of some length $n \geq 3$. Only the former can happen.

We have proved so far that unless $G$ is of the form 
$\tilde{D_n}$ for some $n \geq 4$ or is a cycle ${\mathcal C}_n$ 
for some $n\geq 2$, the graph $G$ is a tree with a unique vertex of degree $3$, and no vertices of higher degree. The smallest such $G$ is $D_4$, and we now look at all possible ways of extending it by  adding one vertex at the time, and making sure that the resulting extension still has only one vertex of degree $3$. 

The start $D_4$ can be extended to $D_5$. The graph $D_5$ can be extended in two ways, to $D_6$ or to $E_6$.  The graph $D_6$ can be extended in two ways, to $D_7$ or to $E_7$.  The graph $E_6$ can be extended in two ways, to $D_7$ or to $\tilde{E_6}$.  No further extensions of $\tilde{E_6}$ are possible.
The graph $D_7$ can be extended in two ways, to $D_8$ or to $E_8$. 
The graph $E_7$ can be extended in three ways, to $E_8$ or to $\tilde{E_7}$ or to a graph that contains $\tilde{E_6}$. No further extensions are possible in the second case, and the third case cannot occur.
The graph $D_8$ can be extended in two ways, to $D_9$ or  to a graph that contains $\tilde{E_8}$.  No further extensions are possible for $\tilde{E_8}$.
The graph $E_8$ can be extended in three ways, to $\tilde{E_8}$,
or to a graph that contains $\tilde{E_7}$, or to a graph that contains $\tilde{E_6}$. No further extension are possible in the first case, and the latter two cases are not possible.
The graph $D_n$, $n\geq 8$ can be extended in two ways, to $D_{n+1}$ or  to a graph that contains $\tilde{E_8}$. The latter case is not possible when $n>8$.
\end{proof}
\fi

When $G$ is a Dynkin diagram, computations indicate that the complement of $V_{d_G}(2)$ in ${\mathbb Z}_{\geq 0}$ is finite, except when $G=A_2$.
We present below  some data for the Dynkin diagrams $D_n$ and $E_n$.
The data for the chains $A_n$ is presented in Example \ref{ex.paths}.

\begin{proposition} \label{pro.Dynkin}
\begin{enumerate}[\rm (a)]
\item If $G= A_n$, $n \geq 2$, then $V_{d_G}(2) \subseteq {\mathbb Z}_{\geq n+1}$.
\item If $G= D_n$, $n \geq 4$, then $V_{d_G}(2) \subseteq {\mathbb Z}_{\geq 4}$.
\item If $G= E_6$, then $V_{d_G}(2) \subseteq {\mathbb Z}_{\geq 3}$.
Computations indicate that the complement of $V_{d_G}(2)$ in ${\mathbb Z}_{\geq 0}$ is contained in 
$$L_{E_6}:=
\begin{array}{c} 
[ 0, 1, 2, 4, 5, 6, 8, 10, 12, 14, 16, 17, 20, 24, 26, 28, 30, 32, 34, 38, \\  44, 46, 48, 
56, 60, 64, 74, 80, 88, 92, 98, 132, 158, 170 ].
\end{array}
$$
\item If $G= E_7$, then $V_{d_G}(2) \subseteq {\mathbb Z}_{\geq 2}$.
Computations indicate that the complement of $V_{d_G}(2)$ in ${\mathbb Z}_{\geq 0}$ is contained in 
$$L_{E_7}:= [ 0, 1, 3, 4, 7, 12, 15, 25, 28 ].
$$
\item If $G= E_8$, then $V_{d_G}(2) \subseteq {\mathbb Z}_{> 0}$.
Computations indicate that the complement of $V_{d_G}(2)$ in ${\mathbb Z}_{\geq 0}$ is contained in 
$$L_{E_8}:=
\begin{array}{c} [ 0, 2, 3, 4, 6, 8, 10, 11, 14, 16, 18, 22, 23, 24, 28, 34, 38, 40, 
\\ 46, 58, 60, 62, 88, 94, 134, 178 ].
\end{array}
$$
\end{enumerate}
\end{proposition}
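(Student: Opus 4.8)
The content of Proposition \ref{pro.Dynkin} splits into two very different parts. The \emph{inclusions} $V_{d_G}(2)\subseteq {\mathbb Z}_{\geq c_G}$, with $c_G=n+1,4,3,2,1$ in the five cases, admit a short uniform proof; the explicit \emph{lists} $L_{E_6},L_{E_7},L_{E_8}$ are offered only as computational evidence and are not claimed to be proved. My plan is to establish the inclusions rigorously and to flag the lists as experimental.

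The inclusions rest on a single fact: for a Dynkin diagram $G$ on $n$ vertices, $M_G(2,\dots,2)$ is positive definite. This is the converse of the direction recorded in Proposition \ref{pro.DynkinAll} (a), and is the classical statement that the Cartan matrix $2\,{\rm Id}_n - A_G$ of a finite-type simply-laced root system is positive definite; for these finitely many diagrams it is in any case a direct check. Granting this, I would apply the monotonicity property \ref{emp.classical} (c). Writing any $M_G(a_1,\dots,a_n)$ with all $a_i\geq 2$ as $M_G(2,\dots,2)+{\rm Diag}(a_1-2,\dots,a_n-2)$ with non-negative, possibly zero, increments, property (c) shows that $M_G(a_1,\dots,a_n)$ is again positive definite and that
$$\det M_G(a_1,\dots,a_n)\ \geq\ \det M_G(2,\dots,2),$$
with equality precisely at $(2,\dots,2)$. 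Two consequences follow immediately. First, positive definiteness forces $\det M_G(a_1,\dots,a_n)>0$, so every value of $d_G$ on the region $\{a_i\geq 2\}$ is already positive; the intersection with ${\mathbb Z}_{\geq 0}$ in the definition of $V_{d_G}(2)$ is therefore redundant, and $V_{d_G}(2)$ is exactly the set of values $d_G(a_1,\dots,a_n)$ with $a_i\geq 2$. Second, each such value is at least $\det M_G(2,\dots,2)$. Hence $V_{d_G}(2)\subseteq {\mathbb Z}_{\geq \det M_G(2,\dots,2)}$, and it remains only to identify this constant in each case.

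These constants are the standard Cartan determinants. For $A_n$ the tridiagonal recursion $\det M_{A_n}(2,\dots,2)=2\det M_{A_{n-1}}(2,\dots,2)-\det M_{A_{n-2}}(2,\dots,2)$ with initial values $2$ and $3$ gives $n+1$, yielding (a). For $D_n$ a short expansion (for instance along a terminal vertex) gives $4$ independently of $n$, yielding (b). For $E_6,E_7,E_8$ the $6\times 6$, $7\times 7$, $8\times 8$ determinants evaluate to $3,2,1$ respectively, yielding (c), (d) and (e); note that ${\mathbb Z}_{\geq 1}={\mathbb Z}_{>0}$ handles the $E_8$ case. All five values agree with the stated bounds.

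I expect no real obstacle in the inclusions: the argument is entirely the monotonicity of the determinant under an increase of the diagonal entries, combined with the positive-definiteness of $M_G(2,\dots,2)$ for Dynkin $G$. The genuine difficulty, which I would \emph{not} attempt here, is the claim that the complements are the finite sets $L_{E_6},L_{E_7},L_{E_8}$: establishing finiteness of the complement of $V_{d_G}(2)$ means showing that all sufficiently large integers are represented by $d_G$ with $a_i\geq 2$, and the positive-definiteness argument gives no information in that direction. This is exactly the portion the author leaves as numerical evidence.
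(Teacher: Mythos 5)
Your proposal is correct and follows essentially the same route as the paper: positive definiteness of $M_G(2,\dots,2)$ for Dynkin diagrams (which the paper also cites via Proposition \ref{pro.DynkinAll} (a), used in the same converse direction you note), monotonicity of the determinant from \ref{emp.classical} (c), and the classical Cartan determinant values $n+1,4,3,2,1$, with the lists $L_{E_6},L_{E_7},L_{E_8}$ left as computational evidence exactly as in the paper. No gaps; your explicit verification of the determinant values by recursion/expansion is just a fleshed-out version of what the paper calls ``classical.''
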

\begin{proof}
Since each of the graphs $G=A_n$, $D_n$, and $E_n$, is such that
$M_G(2,\dots,2)$ is positive definite (see \ref{pro.DynkinAll} (a)), we find from \ref{emp.classical} (c) that the smallest value taken by $d_G(x_1,\dots, x_n)$  when $x_i\geq 2$ is $d_G(2,\dots, 2)$. It is classical that
 $d_{A_n}(2,\dots, 2)=n+1$, $d_{D_n}(2,\dots, 2)=4$, $d_{E_6}(2,\dots, 2)=3$, $d_{E_7}(2,\dots, 2)=2$, and $d_{E_8}(2,\dots, 2)=1$.
\end{proof}

\begin{remark} \label{ex.Dynkin4}
\begin{enumerate}[\rm (a)]
\item Let $G= D_4$.
Computations indicate that the complement of $V_{d_G}(2)$ in ${\mathbb Z}_{\geq 0}$ is contained in the following set
$$L_{D_4}:=
\begin{array}{c} [
0, 1, 2, 3, 5, 6, 7, 9, 10, 11, 13, 14, 17, 18, 19, 21, 23, 25, 26, 30, 31, 34, 35, 37, 38, 
41, \\
45, 47, 49, 53, 58, 61, 65, 66, 67, 74, 77, 79, 83, 86, 91, 93, 97, 
101, 103, 109, 
 110, \\ 114, 115, 121, 125, 
 126, 129, 130, 131, 143, 145, 153,
167, 173, 
178, 181, 187,  199, \\ 206,  210, 
223, 229, 247, 251, 258, 265, 301, 325,
343, 391, 
417, 426, 437, 451,   \\  517, 593, 595, 606, 633, 637, 
 649, 671, 763, 823,859, 871, 937, 977, \\
1027, 1087, 1330, 1517, 1661, 4477, 4585, 5273].
\end{array}
$$
\item Let $G= D_5$.
Computations indicate that the complement of $V_{d_G}(2)$ in ${\mathbb Z}_{\geq 0}$ is contained in 
$$L_{D_5}:= [ 0, 1, 2, 3, 5, 6, 7, 10, 11, 13, 15, 21, 22, 30, 31, 37, 43, 46, 55, 58, 75, 91, 102, 165, 330 ].
$$
\item Let $G= D_6$.
Computations indicate that the complement of $V_{d_G}(2)$ in ${\mathbb Z}_{\geq 0}$ is contained in 
$$L_{D_6}:=
\begin{array}{c} 
[ 0, 1, 2, 3, 5, 6, 7, 9, 11, 13, 14, 15, 17, 18, 23, 25, 27, 29, 33, 35, 38, \\
 45, 47, 49, 50, 53, 69, 71, 78, 95, 97, 105, 133, 203, 245 ].
\end{array}
$$
\item Let $G= D_7$.
Computations indicate that the complement of $V_{d_G}(2)$ in ${\mathbb Z}_{\geq 0}$ is contained in 
$$L_{D_7}:=
\begin{array}{c} 
[ 0, 1, 2, 3, 5, 6, 7, 9, 10, 13, 14, 15, 17, 19, 22, 23, 26, 27, 30, \\
33, 38, 42, 43, 49, 55, 57, 62, 78, 79, 110 ].
\end{array}
$$
\item Let $G= D_8$.
Computations indicate that the complement of $V_{d_G}(2)$ in ${\mathbb Z}_{\geq 0}$ is contained in 
$$L_{D_8}:=
\begin{array}{c} 
[ 0, 1, 2, 3, 5, 6, 7, 9, 10, 11, 13, 14, 15, 17, 18, 19, 21, 22, 25, 26, 29, 30,\\
 31, 33, 35, 37, 41, 43, 46, 49, 50,
54, 55, 58, 59, 61, 63, 65, 71, 73, 90, \\ 91, 94, 101, 105, 118, 121, 138, 169, 183, 205, 250].
\end{array}
$$
\end{enumerate}
\end{remark}

Let now $G$ be an extended Dynkin diagram. The data below for $\tilde{E_n}$, $n=6,7,8$,
suggests that the complement of the set $V_G(2)$ in ${\mathbb Z}_{\geq 0}$ might be finite.
The available data for $G= \tilde{D_n}$, $n=5,6,7$, seems to 
support the same assertion for these graphs. 
The data in the case of $G= \tilde{D_4}$, the star on $5$ vertices, is  less clear.
 
Let $G$ be the cycle ${\mathcal C}_n$, $n\geq 3$.
The available data when $n=4,5,6$ also seems to suggest that the complement of the set $V_G(2)$ is finite. The case of $G={\mathcal C}_3$ is less  clear
(see Example \ref{ex.C3}). 

\begin{proposition} \label{pro.extendedDynkin}
\begin{enumerate}[\rm (a)]
\item If $G= {\mathcal C}_n$, $n \geq 2$, then $V_{d_G}(2) \subseteq \{0\} \sqcup {\mathbb Z}_{\geq n}$,
and  $V_{d_G}(2)$ contains all positive multiples of $n$. 
\item If $G= \tilde{D_n}$, $n \geq 4$, then $V_{d_G}(2) \subseteq  \{0\} \sqcup {\mathbb Z}_{\geq 4}$.
Moreover,  $V_{d_G}(2)$ contains all positive multiples of $4$. 
\item If $G= \tilde{E_6}$, then $V_{d_G}(2) \subseteq \{0\} \sqcup {\mathbb Z}_{\geq 3}$.
Moreover,  $V_{d_G}(2)$ contains all positive multiples of $3$. 
Computations indicate that the complement of $V_{d_G}(2)$ in ${\mathbb Z}_{\geq 0}$ is contained in 
$$L_{\tilde{E_6}}:=
\begin{array}{c} 
[ 1, 2, 4, 5, 7, 8, 11, 13, 14, 16, 19, 20, 22, 23, 26, 29, 32,34, 35, 37, 41, 44, 46, 49, \\
                       53, 56, 58, 62, 71, 74, 82, 89, 95, 104, 106, 118, 128, 137, 140, 167, 172,  
                   184, 188,   \\ 212, 218, 271, 287, 302,  386 ].
\end{array}
$$
\item If $G= \tilde{E_7}$, then $V_{d_G}(2) \subseteq \{0\} \sqcup {\mathbb Z}_{\geq 2}$.
Moreover,  $V_{d_G}(2)$ contains all even positive integers. 
Computations indicate that the complement of $V_{d_G}(2)$ in ${\mathbb Z}_{\geq 0}$ is contained in 
$$L_{\tilde{E_7}}:=
\begin{array}{c} 
[ 1, 3, 5, 9, 11, 13, 15, 19, 21, 23, 25, 29, 33, 43, 45, 49, \\
51, 59, 75, 81, 115, 121, 141, 145, 159, 189 ].
\end{array}
$$
\item If $G= \tilde{E_8}$, then $V_G(2) = {\mathbb Z}_{\geq 0}$.
\end{enumerate}
\end{proposition}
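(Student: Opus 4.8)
The plan is to prove the equality $V_{\tilde{E_8}}(2)={\mathbb Z}_{\geq 0}$ by establishing the two inclusions $\mathbb{Z}_{>0}\subseteq V_{\tilde{E_8}}(2)$ and $0\in V_{\tilde{E_8}}(2)$ separately: the positive values come from the inductive machinery of Section 2, while the value $0$ is produced by writing down one explicit arithmetical structure, namely the all-twos structure. Note at the outset that Theorem \ref{thm.main1} (b)(ii) is \emph{not} directly usable here, because $\tilde{E_8}$ is a tree: one cannot add a vertex of degree $\geq 2$ to a connected induced subtree without creating a cycle, so there is no chain of induced subgraphs adding vertices of degree at least $2$. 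This is why the value $0$ must be handled by a separate direct argument.

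For the positive values I would use that $E_8$ is an induced subgraph of $\tilde{E_8}$. Let $v$ be the unique vertex of mark $1$, i.e. the white (affine) vertex marked in the picture in \ref{def.Dynkin}; it is a leaf, and removing it gives the connected graph $(\tilde{E_8})_v=E_8$. Since $M_{E_8}(2,\dots,2)$ is positive definite by \ref{pro.DynkinAll} (a) with $\det M_{E_8}(2,\dots,2)=d_{E_8}(2,\dots,2)=1$, its associated group is trivial, hence cyclic, so $1\in V_{E_8}(2)$. Proposition \ref{pro.induction} (a), applied with this vertex $v$ and $r=2$, then yields $V_{\tilde{E_8}}(2)\supseteq{\mathbb Z}_{>0}$ immediately.

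It remains to show $0\in V_{\tilde{E_8}}(2)$, and here I would take $M:=M_{\tilde{E_8}}(2,\dots,2)$. By \ref{pro.DynkinAll} (b) it is positive semi-definite with $\det M=0$, and since the radical of an extended Dynkin form is one-dimensional, $M$ has rank $n-1$. Letting $R$ be the vector of marks of $\tilde{E_8}$ (positive integer entries, coprime since one entry equals $1$, and satisfying $MR=0$), the pair $(M,R)$ is an arithmetical structure with all diagonal entries equal to $2$. To finish I must check that $\Phi_M$ is cyclic, and in fact it is trivial: deleting from $M$ the row and column of the mark-$1$ vertex $v$ gives exactly $M_{E_8}(2,\dots,2)$, of determinant $1$. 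By \ref{emp.classical} (b) we have $M^*=|\Phi_M|\,R\,({}^tR)$, so comparing the $(v,v)$-entries gives $1=\det M_{E_8}(2,\dots,2)=(M^*)_{vv}=|\Phi_M|\,r_v^2=|\Phi_M|$, since $r_v=1$. Hence $\Phi_M$ is trivial, so $0\in V_{\tilde{E_8}}(2)$, and combining with the previous paragraph gives $V_{\tilde{E_8}}(2)={\mathbb Z}_{\geq 0}$.

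I do not expect a genuine obstacle. The positive part is a one-line citation of Proposition \ref{pro.induction} (a) once $1\in V_{E_8}(2)$ is recorded, and the value $0$ rests entirely on the clean minor identity $\det M_{E_8}(2,\dots,2)=1$ together with $M^*=|\Phi_M|\,R\,({}^tR)$. The only points requiring honest verification are that the mark-$1$ vertex is a leaf whose removal returns $E_8$, and that the marks $R$ genuinely lie in $\ker M$; both are immediate from the description of $\tilde{E_8}$ in \ref{def.Dynkin}.
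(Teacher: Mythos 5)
Your proposal addresses only part (e) of the proposition. The statement has five parts, and parts (a)--(d) --- the cycles ${\mathcal C}_n$, the diagrams $\tilde{D_n}$, and $\tilde{E_6}$, $\tilde{E_7}$ --- are nowhere treated in your write-up; this is a genuine gap, not a stylistic omission. The paper proves those parts with a uniform mechanism that your $\tilde{E_8}$ argument in fact mirrors: for each of these graphs $M_G(2,\dots,2)$ is positive semi-definite with determinant $0$ (Proposition \ref{pro.DynkinAll} (b)), so by the monotonicity of \ref{emp.classical} (c) every value of $d_G$ on ${\mathbb Z}_{\geq 2}^n$ is non-negative and $0$ is attained; moreover $(M_G(2,\dots,2),R)$, with $R$ the vector of marks, is an arithmetical structure whose group $\Phi$ has order $n$, $4$, $3$, $2$ in cases (a)--(d), so Proposition \ref{pro.arithmeticalstructure} (b) produces all positive multiples of $n$, $4$, $3$, $2$ respectively; finally, the containments $V_{d_G}(2)\subseteq\{0\}\sqcup{\mathbb Z}_{\geq n}$, etc., follow because any admissible diagonal other than $(2,\dots,2)$ dominates some $(2,\dots,3,\dots,2)$, whose determinant $|\Phi|\,r_i^2$ is at least $|\Phi|$. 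None of this appears in your proposal, so as it stands it proves one part of a five-part statement.

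For the part you do prove, your argument is correct and is actually more careful than the paper's own. The paper settles $\tilde{E_8}$ in one line by citing Theorem \ref{thm.main1} (b); but, as you rightly observe, part (b)(ii) of that theorem can never apply to a tree (a vertex attached to a connected induced subgraph by two or more edges creates a cycle), so that citation only yields ${\mathbb Z}_{>0}\subseteq V_{\tilde{E_8}}(2)$, and the value $0$ needs a separate argument. Your repair --- the arithmetical structure $M_{\tilde{E_8}}(2,\dots,2)$ with the marks vector $R$, together with the computation $|\Phi_M|=(M^*)_{vv}/r_v^2=\det M_{E_8}(2,\dots,2)=1$ via \ref{emp.classical} (b) --- is exactly right, and it is the same device the paper uses explicitly in its proofs of parts (b)--(d). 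So keep your part (e) as written, and extend that same technique to parts (a)--(d) to obtain a complete proof.
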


\begin{proof}
Since each of the graphs $G={\mathcal C}_n$, $\tilde{D_n}$, and 
$\tilde{E_n}$, is such that
$M_G(2,\dots,2)$ is definite semi-positive,  the smallest value taken by $d_G(x_1,\dots, x_n)$  when $x_i\geq 2$ is $d_G(2,\dots, 2)=0$. 

(a) When $G={\mathcal C}_n$, the matrix $M_G(2,\dots,2)$ is the usual Laplacian. Recall that $d_G(t, 2,\dots, 2)=n(t-2)$.
We conclude from Proposition \ref{pro.arithmeticalstructure} (b) that every multiple of $n$ is in the set $V_{d_G}(2)$ and that $n$ is the smallest non-zero value in $V_{d_G}(2)$. Since the critical group of ${\mathcal C}_n$ is known to be cyclic, 
we find that $0 \in V_G(2)$.  

(b) When $G=\tilde{D_n}$, the matrix $M_G(2,\dots,2)$ is the matrix associated with an arithmetical structure with $|\Phi|=4$. The group is cyclic of order $4$ when $n$ odd, and it is $({\mathbb Z}/2{\mathbb Z})^2$ when $n$ is even. The matrix $M_G(2,\dots,2)$ is the unique arithmetical structure on $G$ of the form  $M_G(a_1,\dots,a_n)$ with $a_1,\dots, a_n \geq 2$. Thus $0 \in V_G(2)$ when $n $ is odd, and $0 \notin V_G(2)$ when $n $ is
even. Proposition \ref{pro.arithmeticalstructure} (b) shows that 
every multiple of $4$ is in  $V_{d_G}(2)$

(c) and (d) When $G=\tilde{E_n}$, $n=6$ (resp.\ $7$), the matrix $M_G(2,\dots,2)$ is the matrix associated with an arithmetical structure with $|\Phi|=3$ (resp.\ $2$). Proposition \ref{pro.arithmeticalstructure} (b) can be applied again.

(d) Since $1 \in V_{E_8}(2)$, Theorem \ref{thm.main1} (b) implies that $V_{\tilde{E_8}}(2) = {\mathbb Z}_{\geq 0}$.
\end{proof}

\begin{remark}
In view of Proposition \ref{pro.DynkinAll}, 
it is natural to wonder whether it is possible to  list all the graphs $G$
such that $M_G(3,2,\dots,2)$ is either positive definite or positive semi-definite.
Examples of such graphs which are not Dynkin diagrams and where $M_G(3,2,\dots,2)$ is positive definite can be found among the rational triple points described in \cite{Art}, page 135. We note below two constructions 
that use the classical extended Dynkin diagrams and produce infinite families of graphs $G$ where  $M_G(3,2,\dots,2)$ is positive semi-definite with determinant $0$. A more sporadic example on $5$ vertices is presented in Remark \ref{rem.0notinV}.
One may wonder whether, in the positive semi-definite case with determinant $0$, all such graphs have to appear in the list given in \cite{Ogg}.

Let $H$ be an extended Dynkin diagram on $n+1$ vertices, or a cycle. Denote by $(M_H, R_H)$ its associated arithmetical structure. Thus, $M_H=M_H(2,\dots, 2)$. Fix a vertex $v$ of $H$, and without loss of generality, assume that $v$ is the first vertex of $H$. If the coefficient of $R_H$ associated with $v$ is $1$ or $2$, we construct a graph $G$ 
on $n+3$ vertices by attaching two new vertices to $v$, each by a single edge. 
 Label the vertices of $G$ by $w,w',v_1,\dots, v_{n+1}$. We claim that $M_G(2,2,3,2,\dots, 2)$ has determinant $0$.
To show this, note the following. If $R_H=(2,\dots)$, then   
consider the vector $^t \!R_G:=(1,1, ^t \! R_H)$. It is easy to check that $M_G(2,2,3,2,\dots, 2) R_G=0$.
 If $R_H=(1,\dots)$, then   
consider the vector $^tR_G:=(1,1, 2(^t R_H))$. It is easy to check that in this case also, $M_G(2,2,3,2,\dots, 2) R_G=0$.

We illustrate this construction with two examples. 
First on the left below, we use $H=\tilde{D_6}$ to obtain a graph $G_1$ on $9$ vertices.
The old vertex of $\tilde{D_6}$ chosen for the construction is indicated in white. 
$$
\begin{array}{cc}
G_1 \begin{tikzpicture}
[node distance=0.8cm, font=\small] 
\tikzstyle{vertex}=[circle, draw, fill, inner sep=0mm, minimum size=0.8ex]
\node[vertex]	(v1)  	at (0,1) 	[label=below:{$4$}] 		{};
 
\node[vertex]	(v33)			[right of=v1, label=below:{$2$}]	{};
 \node[vertex]	(w)			[above of=v33, label=left:{$15$}]	{};

\node[vertex, fill=none]	(v2)			[right of=v33, label=below left:{$2 \! \!$ }]	{};
\node[vertex]	(v44)			[above of=v2, label=left:{$5 \! \!$}]	{};
\node[vertex]	(v55)			[below of=v2, label=below:{$2$}]	{};
 
\node[vertex]	(v3)			[right of=v2, label=below:{$2$}]	{};
\node[vertex]	(v)			[above of=v3,  label=right:{ ${3}$}]	{};
\node[vertex]	(v4)			[right of=v3, label=below:{$4$}]	{};
 
\draw [thick] (v1)--(v33)--(v2);
\draw [thick] (v2)--(v3)--(v4);
\draw [thick] (v)--(v3);
//\draw [thick] (v44)--(v2)--(v55);
\draw[thick] (v33)--(w);
\end{tikzpicture}
\quad \quad 
\quad \quad \quad 
\quad
&
G_2 \quad \begin{tikzpicture}
[node distance=0.8cm, font=\small] 
\tikzstyle{vertex}=[circle, draw, fill, inner sep=0mm, minimum size=0.8ex]
\node[vertex]	(v1)  	at (0,1) 	[label=below:{}] 		{};
\node[vertex, fill=none]	(v33)			[right of=v1, label=below:{}]	{};
 \node[vertex]	(w)			[above of=v33, label=above:{}]	{};

\node[vertex]	(v2)			[right of=v33, label=below:{}]	{};
\node[vertex]	(v44)			[above of=v2, label=below:{}]	{};
\node[vertex]	(v3)			[right of=v2, label=below:{ }]	{};
\node[vertex]	(v)			[above of=v3,  label=right:{  }]	{};
\node[vertex]	(v4)			[right of=v3, label=below:{}]	{};
\draw [thick] (v1)--(v33)--(v2);
\draw [thick] (v2)--(v3)--(v4);
\draw [thick] (v)--(v3);
//\draw [thick] (v44)--(v2);
\draw[thick] (v33)--(w);
\end{tikzpicture}
\end{array}
$$
The graph $G_1$ is denoted (15) in Table 1, page 521 in \cite{B-D}. It is minimal in the sense that $1 \in V_{G_1}(2)$
but $1 \notin V_T(2)$ if $T$ is any subtree of $G_1$. 
Computations indicate that 
$V_{{d_{G_1}}}(2) ={\mathbb Z}_{\geq 0}$. We have labeled the graph with a set of coefficients on the diagonal that give  $\det M_{G_1}(a_1,\dots, a_9)=1$. 

The graph $G_2$ above on the right has $8$ vertices and is obtained from  the construction with   $H=\tilde{D_5}$.
The old vertex of $\tilde{D_5}$ chosen for the construction is indicated in white. 
The graph $G_2$ is a subgraph of $G_1$. 
Computations indicate that the complement of $V_{d_{G_2}}(2)$ in 
${\mathbb Z}_{\geq 0}$ is contained in 
$$L_{G_2}= \begin{array}{c}
[ 1, 5, 23, 25, 31, 53, 61, 71, 73, 145, 163, 199, \\ 211, 
229, 275, 289, 365, 379,383, 421, 451, 493, 799, 1153 ].
\end{array}.$$

A different construction is as follows. Start with an extended Dynkin diagram $H=\tilde{D_n}$ with $n\geq 4$.
Fix a pair of vertices of degree $1$ attached to the same vertex  in $H$.
Without loss of generality, we can assume that these vertices of degree $1$ are $v_1$ and $v_2$, attached to a vertex $v_3$. Consider the graph $G$ obtained by linking a new vertex $w$ to $v_1$ by a single edge. Order the vertices of $G$ as $w,v_1, v_2,v_3,\dots $. Then 
we claim that $M_G(2,2,3,2,\dots, 2)$ has determinant $0$. 
The vector $^tR_G:= (2,4,2,6,\dots,6,3,3)$ is such that $M_G R_G=0$. Here we have ordered the vertices of $H$ such that the last two vertices again have degree $1$, and the antepenultimate vertex has degree $3$ in $H$.
When $n=4$, note that all vertices of degree $1$ are linked to the same vertex of degree $4$. The construction produces the graph $
S_5^+$ in this case

We illustrate this second construction with the example of $H=\tilde{D_5}$, obtaining a graph on $7$ vertices.
The old vertex of $\tilde{D_5}$ chosen for the construction is indicated in white: 
$$
G_3 \quad \begin{tikzpicture}
[node distance=0.8cm, font=\small] 
\tikzstyle{vertex}=[circle, draw, fill, inner sep=0mm, minimum size=0.8ex]
\node[vertex]	(v1)  	at (0,1) 	[label=below:{}] 		{};
\node[vertex, fill=none]	(v33)			[right of=v1, label=below:{}]	{};

\node[vertex]	(v2)			[right of=v33, label=below:{}]	{};
\node[vertex]	(v44)			[above of=v2, label=below:{}]	{};
\node[vertex]	(v3)			[right of=v2, label=below:{}]	{};
\node[vertex]	(v)			[above of=v3,  label=right:{ ${}$}]	{};
\node[vertex]	(v4)			[right of=v3, label=below:{}]	{};
\draw [thick] (v1)--(v33)--(v2);
\draw [thick] (v2)--(v3)--(v4);
\draw [thick] (v)--(v3);
//\draw [thick] (v44)--(v2);
\end{tikzpicture}
$$
The graph $G_3$  is a subgraph of the previous examples. 
Computations indicate that the complement of $V_{d_{G_3}}(2)$ in 
${\mathbb Z}_{\geq 0}$ is contained in $L_{G_3}=[ 1, 21, 25, 37, 75 ]$.
\end{remark}

\if false

a similar proposition can be proved for matrices of the form $M_G(3,2,\dots,2)$.  As 
the set of matrices of the form $M_G(3,2,\dots,2)$ which are positive semi-definite would need to be listable.

Examples of arithmetical structures 

Cycle with a -3 curve, and two -2 curves attached to this vertex

Extended Dynkin D_n  2-4-6-...-6-3 (where usual extended Dynkin is 1-2---2-1
The only -3 curve meets the 6 in 2-4-6, giving it multiplicity 2.

On the cycle on 4 vertices: add a -3 anywhere and get a structure with positive definite. Can we extend this to a positive semi-definite? 
Yes if we add a -2 curve attached to the curve opposite to the -3. 
Otherwise, attaching a -2 somewhere else still makes it positive definite.   

\end{remark}

\fi

\end{section}

\begin{section}{The graphs $A_n$ and small variations} \label{sec.paths}

We discuss in this section the paths $A_n$, $n\geq 3$, and some graphs $A_3(e,f)$ on three vertices generalizing $A_3$. We also consider the cycle ${\mathcal C}_3$ and generalizations at the end of the section. Let us note here again that when $G=A_n$, then $0 \notin V_{d_G}(2)$ since the matrix $M_G(2,\dots,2)$ is positive definite. We also note that the group $\Phi$ associated to any matrix of the form $M_G(a_1,\dots, a_n)$ with $a_1,\dots, a_n \geq 2$ is always cyclic (see \cite{Lor13}, Lemma 3.13).
Thus we have $V_G(2)=V_{d_G}(2)$ when $G=A_n$. 

Theorem \ref{thm.density} shows that the complement of $V_{A_n}(2)$ in ${\mathbb Z}_{\geq 0}$ is dense.
The data below in Example \ref{ex.paths} suggests that this complement might always be finite. The smallest values in the complement can be explicitly described using the next lemma, and there are at least $4n-6$ such values.

\begin{lemma} \label{lem.Anmissingvalues}
 Let $G=A_n$, $n \geq 3$.  The set
$V_G(2)$ starts with the following values:
$$[ n+1, 2n+1, 3n-1, 3n+1, 4n-5, 4n, 4n+1, 5n-11, 5n+1, \dots
].$$
\end{lemma}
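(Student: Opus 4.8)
The plan is to reduce everything to understanding the values of $f(t_1,\dots,t_n) := d_{A_n}(2+t_1,\dots,2+t_n)$ for $t_i \in \mathbb{Z}_{\ge 0}$. As recalled in the paragraph preceding the lemma, for $G=A_n$ every matrix $M_G(a_1,\dots,a_n)$ with $a_i\ge 2$ is positive definite (it dominates on the diagonal the positive definite Dynkin matrix $M_{A_n}(2,\dots,2)$) and its group $\Phi$ is cyclic; hence $V_{A_n}(2)=V_{d_{A_n}}(2)=\{\,f(t):t\in\mathbb{Z}_{\ge 0}^n\,\}$, so I only need to locate the smallest elements of this value set. Writing $A:=M_{A_n}(2,\dots,2)$, the determinant $f(t)=\det(A+{\rm Diag}(t))$ expands multilinearly as
$$ f(t)=\sum_{S\subseteq\{1,\dots,n\}} m_S\prod_{i\in S}t_i,\qquad m_S:=\det\big(A[\{1,\dots,n\}\setminus S]\big), $$
where $A[T]$ is the principal submatrix on $T$. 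Since $A[T]$ is the matrix of the induced subgraph on $T$, a disjoint union of subpaths, $m_S$ factors as the product over connected components of $(\text{number of vertices}+1)$; in particular $m_\emptyset=n+1=d_{A_n}(2,\dots,2)$ and $m_{\{i\}}=i(n-i+1)$, since removing $v_i$ splits the path into $A_{i-1}$ and $A_{n-i}$.

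Next I would establish a confinement estimate reducing the problem to a finite, $n$-independent search. Every $m_S$ is a product of positive integers, so $m_S\ge 1>0$; thus $f$ is strictly increasing in each $t_i$, and dropping the nonnegative higher-order terms gives
$$ f(t)\;\ge\;(n+1)+\sum_{i=1}^n t_i\,m_{\{i\}}\;\ge\;(n+1)+n\sum_{i=1}^n t_i, $$
using $i(n-i+1)\ge n$ with equality exactly at the two ends $i\in\{1,n\}$. Hence any value $f(t)\le 5n+1$ forces $\sum_i t_i\le 4$, and applying the bound entry-by-entry forces the support of $t$ to lie near the two ends with small multiplicities (for large $n$: $t_i=0$ once $5\le i\le n-4$, and $t_i$ bounded by $1,2,4$ as $i$ approaches an end). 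This leaves only finitely many candidate vectors, independently of $n$.

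I would then evaluate $f$ on each surviving candidate using the product-of-$(\text{length}+1)$ formula for $m_S$. The single-entry moves $t=e_i$ give $f=(n+1)+i(n-i+1)$, producing $2n+1,\,3n-1,\,4n-5,\,5n-11$ for $i=1,2,3,4$; the repeated end-moves $t=ke_1$ give $(k+1)n+1$, producing $3n+1,\,4n+1,\,5n+1$; and the two-end move $t=e_1+e_n$ gives $(n+1)+2n+m_{\{1,n\}}=(n+1)+2n+(n-1)=4n$, since removing both ends leaves $A_{n-2}$. Sorting the values that are $\le 5n+1$ (the order stabilizes once $n$ is large enough that $4n+1<5n-11$, i.e.\ $n\ge 13$; for small $n$ there are coincidences, e.g.\ $e_1$ and $e_3$ agree when $n=3$) reproduces the listed initial segment $n+1,\,2n+1,\,3n-1,\,3n+1,\,4n-5,\,4n,\,4n+1,\,5n-11,\dots$, and the same procedure continues to fill in the tail indicated by the ellipsis with the next values $5n-3$ (from $2e_2$), $5n-2$ (from $e_1+e_2$), and $5n+1$ (from $4e_1$). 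Reading off the integers in the gaps between consecutive attained values then yields the omitted values and, in particular, the lower bound of $4n-6$ on their number.

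The main obstacle is \emph{completeness} of the enumeration, not the computation of any individual value: I must be certain no discarded configuration drops below the current threshold. The linear bound $f(t)\ge(n+1)+\sum_i t_i m_{\{i\}}$ disposes of almost everything, but the borderline two-support configurations require the cross-term minors to be pinned down exactly. Concretely one must verify that the mixed moves all exceed $5n+1$ — for instance $2e_1+e_n$, $e_1+e_3$, and $e_2+e_{n-1}$ evaluate to $6n-1$, $7n-9$, and $9n-15$ respectively — while isolating the genuinely small two-support moves $e_1+e_n$, $e_1+e_2$, $2e_2$, which are precisely the ones that must be slotted correctly into the ordering near $5n$. Once this finite table of minor values is assembled and the relevant inequalities are checked, the sorting and the extraction of the missing values become routine.
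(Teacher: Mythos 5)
Your proposal is correct, and it is strictly more complete than the paper's own proof. The paper only verifies \emph{membership}: it computes $d_{A_n}(t,2,\dots,2)=n(t-1)+1$ (giving $n+1,2n+1,3n+1,4n+1,\dots$), the value $n+1+i(n-i+1)$ obtained by placing a single $3$ at the $i$-th position from an end (giving $3n-1$, $4n-5$, $5n-11$), and $d_{A_n}(3,2,\dots,2,3)=4n$ --- exactly the evaluations you recover from your subset-minor expansion $f(t)=\sum_{S} m_S\prod_{i\in S}t_i$ with each $m_S$ a product of (path length $+1$)'s. What the paper never addresses is \emph{completeness}, i.e.\ the actual content of ``starts with'': that no other value of $d_{A_n}$ on $\mathbb{Z}_{\geq 2}^n$ falls below or between the listed ones. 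Your confinement bound $f(t)\geq (n+1)+\sum_i t_i\,i(n-i+1)\geq (n+1)+n\sum_i t_i$, which pins the support of $t$ to a bounded neighborhood of the two ends and reduces everything to a finite, $n$-independent search, is precisely the missing half, and your cross-term evaluations ($6n-1$, $7n-9$, $9n-15$, etc.) close the borderline cases. Two by-products of your sharper analysis are worth recording. First, the displayed ordering only holds once $4n+1<5n-11$, i.e.\ $n\geq 13$, a restriction the paper does not state. Second, and more significantly, the values $5n-3=d_{A_n}(2,4,2,\dots,2)$ and $5n-2=d_{A_n}(3,3,2,\dots,2)$ are attained and lie strictly between $5n-11$ and $5n+1$, so the lemma's list, if read as a consecutive initial segment, must have them inserted before $5n+1$. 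You present these as part of the tail covered by the ellipsis, which is generous to the statement as printed --- they in fact belong inside the displayed segment --- but your mathematics is right, and this is a correction that the paper's membership-only argument had no way to detect.
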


\begin{proof}
We have  
$d_{A_n}(t,2,\dots, 2)= n(t-1) +1$. This shows that $V_G(2)$ contains $n+1,2n+1, 3n+1,\dots$. 
We have $d_{A_n}(2,\dots,2, 3,2)=3n-1$ and  $d_{A_n}(3,2,\dots, 2,3)=4n$.
Moreover, $d_{A_n}=(2,2,\dots, 3,2,2)=4n-5$. More generally, placing $3$ on the $i$-th column from the end (with $2i\leq n$):
the determinant is $n+1$ + $i(n-i+1)= (i+1)n-(i^2-i-1)$.
With $i=4$, we obtain $5n-11$.
\end{proof}

\begin{example} \label{ex.paths}
\begin{enumerate}[\rm (a)]
\item 
Computations indicate that the complement of $V_{A_3}(2)$ in ${\mathbb Z}_{\geq 0}$ is contained in the following set
$$L_{A_3}:= [ 0,1, 2, 3, 5, 6, 9, 11, 14, 15, 35, 105, 510 ].
$$
\item 
Computations indicate that the complement of $V_{A_4}(2)$ in ${\mathbb Z}_{\geq 0}$ is contained in 
$$L_{A_4}:=
\begin{array}{c} 
[ 0, 1, 2, 3, 4, 6, 7, 8, 10, 12, 14, 15, 20,  22, 24, 26, 28, 38, 40, 42, 48, 52,
68,\\ 104, 132, 150, 188, 314 ].
\end{array}
$$
\item 
Computations indicate that the complement of $V_{A_5}(2)$ in ${\mathbb Z}_{\geq 0}$ is contained in 
$$L_{A_5}:=
\begin{array}{c} 
[0, 1, 2, 3, 4, 5, 7, 8, 9, 10, 12, 13, 17, 18, 19, 27, 28, 34, 40, 52, 63, 88 ].
\end{array}
$$
\item 
Computations indicate that the complement of $V_{A_6}(2)$ in ${\mathbb Z}_{\geq 0}$ is contained in 
$$L_{A_6}:=
\begin{array}{c} 
[ 0, 1, 2, 3, 4, 5, 6, 8, 9, 10, 11, 12, 14, 15, 16, 18, 20, 21, 22, 23, 26, 29,
30, 32, 36, 38, \\ 42,
 44, 48, 52, 54, 56, 62, 70, 80, 81, 86, 96, 102, 108, 110,
122, 126, 140, 180, 236 ].
\end{array}
$$
\end{enumerate}
\end{example}

It is natural to wonder whether $d_{A_3}(x,y,z)=xyz-x-z$ takes every  non-negative value when $x,y,z \geq 2$, except for the values listed in $L_{A_3}$. We have not been able to answer this question beyond the following remarks.

\begin{proposition} \label{pro.density4}
The set $V_{A_3}(2)$ contains: 
\begin{enumerate}[\rm (a)]
\item All even positive integers $n$, except possibly  those of the form $n=2^m-2$ with $m$ odd or $m=2,4$.
\item All odd positive integers $n$ such that $n+2$ is not prime.
\item All odd positive integers
congruent to $1$ modulo $4$ and  congruent to $2$ or $8$ modulo $9$. 
\end{enumerate}
\end{proposition}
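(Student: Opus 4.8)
The plan is to exploit the symmetry of $d_{A_3}(x,y,z)=xyz-x-z$ in the two endpoint variables $x$ and $z$, together with one uniform reduction: fixing an endpoint $z=c$ gives $n=x(cy-1)-c$, hence $n+c=x(cy-1)$. Thus $n\in V_{A_3}(2)$ is witnessed by the choice $z=c$ exactly when $n+c$ factors as $n+c=x\cdot d$ with an integer cofactor $x\geq 2$ and a divisor $d=cy-1$ satisfying $d\equiv -1\pmod c$ and $d\geq 2c-1$ (the last inequality is just $y\geq 2$). First I would record that there is nothing further to check once such a triple is found: as already noted in this section, for $A_n$ every matrix $M_{A_n}(a_1,\dots,a_n)$ with $a_i\geq 2$ is positive definite and has cyclic $\Phi$, so $V_{A_3}(2)=V_{d_{A_3}}(2)$ and it suffices to realize $n$ as $d_{A_3}(x,y,z)$ with $x,y,z\geq 2$. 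I would then prove all three parts by taking $c\in\{2,4\}$.

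For parts (a) and (b) I would use $c=2$, where the requirement reads $n+2=x(2y-1)$ with $x\geq 2$ and $2y-1$ odd and $\geq 3$. If $n$ is even, write $n+2=2^{a}t$ with $t$ odd and $a\geq 1$; taking $x=2^{a}$ and $2y-1=t$ succeeds as soon as $t\geq 3$, i.e. unless $n+2$ is a power of $2$. This already yields every even $n$ not of the form $2^{m}-2$. If instead $n$ is odd with $n+2$ not prime, then $n+2$ is an odd composite $ab$ with $a,b\geq 3$, and $(x,y,z)=(a,(b+1)/2,2)$ gives $d_{A_3}=ab-2=n$; this is part (b).

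The remaining and hardest case is $n=2^{m}-2$ in part (a): here $c=2$ fails since $n+2=2^{m}$ has no odd factor $>1$, and no single small $y$ works uniformly (for instance $2n+1=2^{m+1}-3$ is composite for $m=6,10,12$ but prime for $m=8$). The key observation I would use is that for \emph{even} $m$ the exponent $m-1$ is odd, so $3\mid 2^{m-1}+1$; setting $s=(2^{m-1}+1)/3$ one checks $s\equiv 3\pmod 4$, and then $(x,y,z)=\bigl(6,\ (s+1)/4,\ 4\bigr)$ is a legitimate triple whose value is $6s-4=2^{m}-2=n$. The conditions $y\in\mathbb{Z}$ and $y\geq 2$ amount to $s\equiv 3\pmod 4$ and $s\geq 7$, and $s\geq 7$ holds precisely for $m\geq 6$. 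This construction degenerates exactly at the excluded values: for $m$ odd one has $3\nmid 2^{m-1}+1$, and for $m=2,4$ one finds $s<7$ — which is why the exceptional set is stated as $2^{m}-2$ with $m$ odd or $m\in\{2,4\}$. I expect assembling this explicit family, and spotting the divisibility $3\mid 2^{m-1}+1$ that drives it, to be the main obstacle, since the naive attempts through a fixed small $y$ genuinely fail.

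Finally, part (c) I would handle with $c=4$, so the requirement becomes $n+4=x(4y-1)$ with $4y-1\equiv 3\pmod 4$ and $4y-1\geq 7$. The hypotheses translate cleanly: $n\equiv 1\pmod 4$ makes $n+4\equiv 1\pmod 4$, while $n\equiv 2$ or $8\pmod 9$ forces $3\mid n+4$ but $9\nmid n+4$. Writing $n+4=3t$ we then get $\gcd(t,3)=1$, and from $3t\equiv 1\pmod 4$ we get $t\equiv 3\pmod 4$. A positive integer $t\equiv 3\pmod 4$ coprime to $3$ is automatically $\geq 7$, so $(x,y,z)=\bigl(3,\ (t+1)/4,\ 4\bigr)$ is a valid triple with $d_{A_3}=3t-4=n$. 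This disposes of the three cases, the only delicate ingredient being the $2^{m}-2$ family of part (a).
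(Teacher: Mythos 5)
Your proposal is correct and follows essentially the same route as the paper: the paper also fixes $z=2$ to reduce to factoring $n+2=x(2y-1)$, fixes $z=4$ to reduce to factoring $n+4=x(4y-1)$, and handles $n=2^m-2$ for even $m\geq 6$ with the very same triple (the paper's $(x,y,z)=(4,(2^{m-3}+1)/3,6)$ is your $(6,(s+1)/4,4)$ with the two endpoint variables swapped, exploiting the same divisibility $3\mid 2^{\mathrm{odd}}+1$). The only cosmetic difference is in part (c), where you derive $t\equiv 3\pmod 4$ directly from $3t\equiv 1\pmod 4$, while the paper reaches the same conclusion by a short contradiction argument about the prime factors of $(n+4)/3$.
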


\begin{proof} 
We claim that  the complement of $V_{A_3}(2)$ in ${\mathbb Z}_{\geq 0}$ consists only of integers $n$ such that {\rm (i)} 
either $n+2$ is prime or $n+2$ is a power of $2$,
and {\rm (ii)} such that $n+4$ is prime, or $n+4$ is not divisible by a prime $p \geq 7 $ congruent to $3$ modulo $4$.

Indeed, writing $xyz-x-z=n$, we find that
when $z=2$, we have $x(2y-1)=n+2$. This can be solved with $x,y \geq 2$ when $n+2$ is not prime, and not a power of $2$. When $z=4$, we have $x(4y-1)=n+4$. This can be solved with $x,y \geq 2$ when $n+4$ is not prime and when at least one divisor of $n+4$ is congruent to $3$ modulo $4$ and greater than $3$. 

Suppose now that $m > 4$ is even. Set $x=4$, $z=6$, and $y=(2^{m-3}+1)/3 \geq 2$.
It is clear that $xyz-x-z = 2^m-2$. When $m \geq 4$ is even, $2^{m-3}+1$ is always divisible by $3$.

Suppose now that $n$ is congruent to $1$ modulo $4$ and 
 congruent to $2$ or $8$ modulo $9$. Then in particular $n$ is not divisible by $3$.
Hence,   one of $n+2$ or $n+4$ has to be divisible by $3$. Suppose that $n+2$ is divisible by $3$. Then $n$ is in $V_{A_3}(2)$ since $n+2$ is neither prime nor  a power of $2$.  Suppose now that $n+4$ is divisible by $3$. Then $n$ is in $V_{A_3}(2)$ unless $(n+4)/3$ is only divisible by primes congruent to $1$ modulo $4$ and by a power of $3$.  Our hypothesis implies that $n+4 $ is congruent to $3$ or $6$ modulo $9$
and so is exactly divisible by $3$.
Since the power of $3$ is odd in $n+4$, then  $n$ has to be congruent to $3$ modulo $4$ when 
$(n+4)/3$ is only divisible by primes congruent to $1$ modulo $4$. 
Since we assume that $n$ is congruent to $1$ modulo $4$, we find that $n\in V_{A_3}(2)$.
\end{proof}

\begin{remark} The first values of $2^{2m+1}-2$ are $0, 6, 30, 126, 510, 2046, 8190,\dots$. 
The values $0,6$, and $510$ are not achieved
by $xyz-x-z$ with $x,y,z\geq 2$. The values $30$ and $126$ are achieved exactly once, with $(x,y,z)= ( 3, 4, 3)$ and $ (12, 2, 6)$, respectively.
\end{remark} 

Consider more generally the polynomial $f(x,y,z)=xyz-ax-bz$ with $a,b \in {\mathbb Z}_{\geq 1}$. It is clear that $V_f(1)={\mathbb Z}_{\geq 0}$ since $f(1,N+a+b,1)=N$ for any integer $N$.

{\it If an integer $p$ divides $a$, then $V_{f}(2)$
contains all non-negative multiples of $p$.} Indeed,
assume that $p$ divides $N$.
 Set $z=p$,  $y=a/p+1$, and $x=N/p+b\geq 2$. Then $xyz-ax-bz=N$. 

\begin{proposition} \label{pro.G(a,b)}
Let $f(x,y,z)=xyz - ax   - bz$ with $a \geq b\geq 1$. 
We have $V_{f}(2)\supseteq  {\mathbb Z}_{>0}$ in the following cases: 
\begin{enumerate}[\rm (a)]
\item $a+1$ or  $b+1 $ is not prime.
 \item $a$ is divisible by $4$ and $b=1$.
\end{enumerate}
Moreover, $0 \in V_{f}(2)$ when $(a,b) \neq (1,1), 
(2,1)$, and $(4,1)$.
\end{proposition}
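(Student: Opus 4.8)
The plan is to treat all three assertions as instances of solving the Diophantine equation $f(x,y,z)=xyz-ax-bz=N$, for the prescribed target $N$, subject to $x,y,z\ge 2$. In each case I will freeze one of the three variables at a cleverly chosen value so that $f$ becomes linear in a second variable; the equation then collapses to a divisibility or factorization condition on a single integer, which I control by elementary number theory. No positivity or cyclicity enters here, since $V_f(2)$ records only the non-negative \emph{values} of $f$. For part (a), the key observation is that setting $yz=a+1$ annihilates the $ax$-term: then $f=x(a+1)-ax-bz=x-bz$, so the choice $x=N+bz$ solves $f=N$. This needs a factorization $a+1=yz$ with $y,z\ge 2$, which exists precisely because $a\ge 1$ forces $a+1\ge 2$, so "$a+1$ not prime" means $a+1$ is composite; and $N\ge 1$, $bz\ge 2$ give $x\ge 2$ for free. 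The hypothesis "$b+1$ not prime" is handled symmetrically by imposing $xy=b+1$, whence $f=z-ax$ and $z=N+ax$, reflecting the symmetry $(x,a)\leftrightarrow(z,b)$ of $f$.

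For part (b) I exploit $b=1$ and $4\mid a$ through two substitutions. First set $y=2$, so $f=z(2x-1)-ax$; imposing $f=N$ gives $z=(N+ax)/(2x-1)$, and multiplying the divisibility condition by $2$ shows $2x-1\mid 2N+a$. Because $4\mid a$, we have $2N+a=2(N+a/2)$ while $2x-1$ is odd, so the requirement becomes: $2x-1$ is an odd divisor $\ge 3$ of the \emph{fixed} integer $N+a/2$. Such a divisor exists unless $N+a/2$ is a power of $2$, and when it exists one checks $z=a/2+(N+a/2)/(2x-1)\ge 2$, giving a valid solution. The only leftover targets satisfy $N+a/2=2^s$; here the role of $4\mid a$ is decisive, since $a/2$ is then even, so every positive $N=2^s-a/2$ is even. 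I finish these by the substitution $z=2$, $y=a/2+1$, $x=(N+2)/2$, which yields $f=2x-2=N$ for all even $N\ge 2$. Thus every $N\ge 1$ is covered.

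For the "Moreover" statement I look for $x,y,z\ge 2$ with $xyz=ax+bz$. When $b\ge 2$ this is immediate from $(x,y,z)=(b,2,a)$, since $f(b,2,a)=2ab-ab-ab=0$, disposing of every case with $b\ge 2$. The remaining case $b=1$ reduces, via $f=z(xy-1)-ax=0$ together with $\gcd(xy-1,x)=1$, to the condition $xy-1\mid a$; writing $d=xy-1$, I need a divisor $d\ge 3$ of $a$ with $d+1=xy$ composite (so that $x,y\ge 2$), and then $z=(a/d)x\ge 2$ works. An elementary divisor analysis shows such $d$ exists for every $a\ge 1$ except $a\in\{1,2,4\}$: for odd $a\ge 3$ take $d=a$ (then $d+1$ is even $\ge 4$); for even $a$ that is not a power of $2$ take any odd divisor $d\ge 3$; for $a=2^k$ with $k\ge 3$ take $d=8$, so $d+1=9$; and $a=1,2,4$ genuinely admit no such $d$. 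Since $a\ge b=1$ forces these exceptions to be exactly $(a,b)=(1,1),(2,1),(4,1)$, the claim follows.

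I expect the main subtlety to lie in the choice of which variable to freeze in part (b). Freezing $x=2$, the naive analogue of the argument used for $A_3$, reduces the problem to factoring $N+2a$ and fails whenever $N+2a$ is an odd prime — a genuinely resistant residual family with no evident parity handle. Freezing $y=2$ instead is the decisive move: its only obstruction is $N+a/2$ being a power of $2$, and the hypothesis $4\mid a$ guarantees those residual targets are even, hence cleared by the trivial $z=2$ family. Isolating this substitution, rather than any single computation, is where the real work resides.
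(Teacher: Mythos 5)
Your proof is correct and follows essentially the same route as the paper's: part (a) via the factorization $xy=b+1$ (resp.\ $yz=a+1$) so that the equation collapses to $z=N+ax$ (resp.\ $x=N+bz$), part (b) via the two substitutions $y=2$ and $z=2$ with $4\mid a$ supplying the parity, and the final claim via $(x,y,z)=(b,2,a)$ for $b\geq 2$ together with the same divisor-of-$a$ analysis (odd divisor, or $d=8$ for powers of $2$) when $b=1$. The only cosmetic difference is in (b): the paper splits directly on the parity of $N$, and its odd-$N$ solution $y=2$, $z=a/2+1$, $x=(N+z)/2$ is exactly your choice $2x-1=N+a/2$, whereas you split on whether $N+a/2$ is a power of $2$.
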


\begin{proof}
(a)
Without loss of generality, we can assume that $b+1$ is not prime.
 Consider the equation $xyz-ax-bz=N$, which we rewrite as $(xy-b)z=N+ax$. Under our hypothesis, the equation $xy-b=1$ 
can be solved with $x,y \geq 2$, and so setting $z=N+ax \geq 2$ shows that $N \in V_{f}(2)$.

(b) 
Assume now that $b=1$.
If $N$ is even, we use the case $p=2$ just above to conclude that since $2$ divides $a$, $xyz-ax-z$ takes all possible even values when $x,y,z \geq 2$.
If $N$ is odd and  $a=4c$,  set $y=2$ and 
$z=2c+1 \geq 2$. Since $z$ is then odd and $N$ is odd, we can set $x=(N+z)/2 \geq 2$, to get $xyz-4cx-z=N$.

The equation $xyz-ax-bz=0$ always has the solution $(x,y,z)=(b,2,a)$, so that $ 0 \in V_{f}(1)$, and when $a,b>1$, 
$ 0 \in V_{f}(2)$. 
Suppose now that $b=1$, and let us show that we can solve
$xyz-ax-z=0$ with $x,y,z \geq 2$ if and only if $a \neq 1,2,4$. If we can solve this equation, then $(xy-1)z=ax$ and $x$ must divide $z$. Write $z=cx$. We need to solve  $(xy-1)c=a$, so $c$ divide $a$.
This equation can be solved if we can find integers $x,y\geq 2$ such that
$xy=1+a/c$.
We claim that given any integer $m>1$, $m\neq 2,4$, we can find a divisor $d$ of $m$ such that $d+1$ is not prime. This is clear if $m$ is divisible by an odd prime. 
When $m$ is a power of $2$, this is true as soon as $8$ divides $m$. We apply this claim to $a$, and we find $c$ which divides $a$ such that $1+a/c$ is not prime, unless $a=1,2,4$, as desired.
\end{proof}
\begin{example}
Computations indicate that in the case of $f=xyz-2x-z$, the complement in 
${\mathbb Z}_{\geq 0}$ of the set $V_f(2)$ might be reduced to $\{0,1,3,7,15\}$.
\end{example}

\begin{remark} \label{rempro.G(a,b)}
Fix $e_1,\dots, e_{n-1} \geq 1$. 
Consider  the graph $A_n(e_1,\dots, e_{n-1})$ with multiple edges  obtained as follows. 
Given $n$ vertices $v_1,\dots, v_n$, link $v_i$ to $v_{i+1}$ by $e_i$ edges, for each $i=1,\dots, n-1$.

The matrix $M_{A_n(e_1,\dots,e_{n-1})}(x_1,\dots,x_n)$ is a tridiagonal matrix, and such matrices occur in the following context.
Recall that two symmetric matrices $M$ and $N$ in $M_n({\mathbb Z})$ are {\it congruent} if there exists $T \in {\rm GL}_n({\mathbb Z})$ such that $N= (^tT)MT$. Newman (\cite{New}, Theorem 1) showed that any positive definite matrix  $M \in M_n({\mathbb Z})$ is congruent to a tridiagonal matrix
with certain specified properties. When $\det(M)=1$, the tridiagonal matrix  is of the form 
 $M_{A_n(1,\dots,1,e_{n-1})}(a_1,\dots, a_n)$. Such matrices are further studied in \cite{DG} and \cite{Ger}. 
 
 In the case $n=3$, the graph $G=A_3(e,f)$ has matrix  $M_{A_3(e,f)}(x,y,z)$ with determinant 
 $d_G(x,y,z)=xyz-f^2x-e^2z$. As we saw in Proposition \ref{pro.G(a,b)},
 the determination of $V_{d_G}(2)$ is not immediate when both $e^2+1 $ and $f^2+1$ are prime.
  
  Let $G=A_3(2,1)$, with
$d_G(x,y,z)=xyz-x-4z$, and  $d_G(3,2,2)=1$. 
  The graph $G$ is the graph with the fewest vertices and edges such that  $1 \in V_G(2)$. See Examples 
 \ref{ex.lollipop} and \ref{ex.cone} for examples on $4$ vertices.
 Proposition \ref{pro.G(a,b)} shows that $V_{d_G}(2)={\mathbb Z}_{>0}$.
 
 Let $G=A_3(2,2)$, with
$d_G(x,y,z)=xyz-4x-4z$. The set $V_{d_G}(2)$ contains all even positive integers, but it is not immediately clear from computations that the complement of $V_{d_G}(2)$
 in ${\mathbb Z}_{\geq 0}$ is finite.
The situation is similar for $V_h(2)$ when $h:=xyz-2x-2z$. The integer $N=538641$ belongs to the complement of both  $V_{d_G}(2)$ and $V_h(2)$.
 
\end{remark}  

\if false

\begin{example} \label{ex.P3}
Given $a,b \in {\mathbb Z}_{>0}$, consider the graph $G=G(a,b)$ on $n=3$ vertices with matrix
$$M_G=\left( \begin{array}{ccc}
z&-a&0  \\
-a&y&-b\\
0&-b&x
\end{array}
\right) \mbox{ and } d_G(x,y,z)=xyz - a^2x   - b^2z.$$
The matrix $M_G(a_1,a_2,a_3)$ with $a_1,a_2,a_3>0$ is  positive definite (resp.\ positive semi-definite) if and only if  $d_G(a_1,a_2,a_3)>0$ (resp.\ $d_G(a_1,a_2,a_3)\geq 0$).
The graph $G(1,1)$ is nothing by the path $A_3$ on $3$ vertices.

The banana graph on $2$ vertices and $a$ edges is an induced subgraph $H$ with $d_H(t,c)=ct-a^2$. Taking $c \geq 2$ and coprime to $a$, we find from  Theorem  \ref{thm.density} (b) that 
$V_{d_{G(a,b)}}(2)$ is dense in ${\mathbb Z}_{\geq 0}$. Our next proposition shows that $V_{d_G}(2)={\mathbb Z}_{\geq 0}$
for most   pairs $(a,b)$.

\if false
Note that if a prime $p$ divides both $a$ and $b$, then $V_{d_{G}}(2)$
contains all non-negative multiples of $p$. Indeed,
assume that $p$ divides $N$ and $a=pc$, $b=pd$.
 Set $y=p\geq 2$, $x=pd^2+1\geq 2$ and $z=N/p+pc^2x \geq 2$. Then $xyz-p^2c^2x-p^2d^2z=N$. 
\fi
 
 Note that if an integer $p$ divides $a$, then $V_{d_{G}}(2)$
contains all non-negative multiples of $p$. Indeed,
assume that $p$ divides $N$ and $a=pc$.
 Set $z=p$,  $y=pc^2+1$, and $x=N/p+b^2\geq 2$. Then $xyz-p^2c^2x-b^2z=N$. 
\end{example}

\begin{remark} Let $G=G(1,2)$.
Note that $d_G(x,y,z)=xyz-x-4z$, and that $d_G(3,2,2)=1$. 
  The graph $G$ is the graph with the fewest vertices and edges such that  $1 \in V_G(2)$. See Examples 
 \ref{ex.lollipop} and \ref{ex.cone} for examples on $4$ vertices.
\end{remark}  

\if false 
Assume from now on that $N$ is even. If $N$ is of the form $N=2(2M+1)$, 
set  $x=z=2$, and consider the resulting equation $4y-8c^2-2=N$.
This equation is solvable if $N+8c^2+2$ is divisible by $4$.
This is the case under our hypothesis since it implies that  $4$ divides $N+2$. Set $y= (N+8c^2+2)/4 \geq 2$. 

If $N$ is of the form $N=4(3Q+1)$, set  $x=3$ and $z=2$, and consider the resulting equation $6y-12c^2-2=N$.
This equation is solvable if $N+12c^2+2$ is divisible by $6$.
This is the case under our hypothesis since it implies that $6$ divides $N+2$.

If $N$ is of the form $N=4(3Q+2)$, set  $x=3$ and $z=4$, and consider the equation $12y-12c^2-4=N$.
This equation is solvable if $N+12c^2+4$ is divisible by $12$.
This is the case under our hypothesis since it implies that   $N+4$ is divisible by $12$.

If $N$ is of the form $N=12Q$, set  $x=3$  and consider the equation $3yz-12c^2-z=N$, which we rewrite
as $(3y-1)z =12(Q+ c^2)$.  If $Q+ c^2+1 $ is divisible by $3$, set $z=12$ and $y=(Q+ c^2+1)/3$.
If $2(Q+ c^2)+1 $ is divisible by $3$, set  $z=6$ and $y=(2(Q+ c^2)+1)/3$.
If $(Q+ c^2)$ is divisible by $3$, factor $ Q+ c^2 = t Q'$, where $t$ is a power of $3$ and $Q'$ is coprime to $3$.
If $Q'+1 $ is divisible by $3$, set $z=12t$ and $y=(Q'+1)/3$.
If $2Q'+1 $ is divisible by $3$, set  $z=6t$ and $y=(2Q'+1)/3$. We need $y=(Q'+1)/3 \geq 2$ in the first case, 
and $y=(2Q'+1)/3 \geq 2$ in the second case. These inequalities occur
unless $Q'=2$ in the first case, and $Q'=1$ in the second case.

In the special case where $Q'=2$ and $N=12(2 \cdot 3^m-c^2)$, set $x=3$, $y=3$, and $z=3^{m+1}$.

If $N$ is of the form $N=12Q$, set  $z=12$  and consider the equation $12yx-4c^2x-12=N$, which we rewrite as $x(3y-c^2) =3(Q+1)$. This equation is solvable if $3$ divides $c$, by setting $x=Q+1$ and $y=c^2/3$.

If $N$ is of the form $N=12Q$, set  $z=4$  and consider the equation $4yx-4c^2x-4=N$, which we rewrite as $x(y-c^2) =3Q+1$. This equation is solvable if $3Q+1$ is not prime. The integer $3Q+1$ is even when $Q$ is odd, and when $Q$ is of the form $3^m-c^2$, we find that $Q$ is odd when $c$ is even.
\fi

\begin{lemma}  Let $G=G(a,b)$ be the graph introduced in \ref{ex.P3}.
In the cases below where $a=b$ and $a^2+1$ is prime\footnote{It is conjectured, but not known, that there exist infinitely many integers $a$ such that $a^2+1$ is prime (see \cite{AFG}, 3.3).}, we have
\begin{enumerate}[\rm (i)] 
\item $1 \notin V_{d_G}(2)$ when $(a,b)=(1,1)$, $(2,2)$ or $(6,6)$.
\item $1 \in V_{d_G}(2)$ when $(a,b)=(4,4)$, $(10,10)$,  $(14,14)$ or $(26,26)$. 
\end{enumerate}
\end{lemma}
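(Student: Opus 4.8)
The plan is to reduce the question ``is $1 \in V_{d_G}(2)$?'' to the solvability, in integers $x,y,z \geq 2$, of the Diophantine equation
$$xyz - a^2 x - a^2 z = 1,$$
since with $b=a$ we have $d_G(x,y,z)=xyz-a^2(x+z)$, and $V_{d_G}(2)$ is by definition the set of non-negative values of $d_G$ attained for $x,y,z \geq 2$. The crucial algebraic step is to multiply the equation by $y$, which produces the factored form
$$(xy-a^2)(yz-a^2) = y + a^4.$$
I would then record two structural consequences. First, both factors on the left are strictly positive: if both were $\leq 0$, then writing $xy=a^2-p$, $yz=a^2-q$ with $p,q\geq 1$ and using $xy,yz\geq 4$ gives $p,q\leq a^2-4$, so the product would be $pq\leq (a^2-4)^2 < a^4 < y+a^4$, a contradiction. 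Second, assuming without loss of generality $x\leq z$, the smaller factor $d:=xy-a^2$ satisfies $1\leq d\leq \sqrt{y+a^4}$, while $x\geq 2$ gives $d\geq 2y-a^2$; comparing these two bounds forces $y(4y-4a^2-1)\leq 0$, hence $2\leq y\leq a^2$. Thus every solution has $y$ in a finite range, with $d$ a divisor of $y+a^4$ satisfying $d\equiv -a^2\pmod y$ (so that $x=(d+a^2)/y$ is an integer $\geq 2$), and symmetrically for $z$.

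For part (ii) it then suffices to exhibit one solution in each case. Explicit witnesses, which I would verify by direct substitution, are
$$(x,y,z)=(5,5,9),\ (9,17,17),\ (5,61,9),\ (5,149,49)$$
for $a=4,10,14,26$ respectively; for instance $5\cdot 5\cdot 9-16\cdot 14=225-224=1$ and $5\cdot 149\cdot 49-676\cdot 54=36505-36504=1$. Since each triple has all entries $\geq 2$, this shows $1\in V_{d_G}(2)$ in these four cases.

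For part (i) I would carry out the finite search permitted by the bound $y\leq a^2$. When $a=1$ the argument is immediate: both factors of $(xy-1)(yz-1)=y+1$ are $\geq 2y-1\geq 3$, so $y+1\geq (2y-1)^2 > y+1$ for $y\geq 2$, a contradiction. When $a=2$ only $y\in\{2,3,4\}$ survive, and each is eliminated by a congruence or divisibility obstruction: $(2x-4)(2z-4)=4(x-2)(z-2)$ cannot equal $18$ and $(4x-4)(4z-4)=16(x-1)(z-1)$ cannot equal $20$, since $4\nmid 18$ and $16\nmid 20$, while for $y=3$ the prime $19$ has no divisor congruent to $-4\equiv 2\pmod 3$. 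When $a=6$ I would first note that, $a$ being even, reduction modulo $2$ forces $x,y,z$ all odd; combined with $y\leq 36$ and the refined inequality $3y-36\leq\sqrt{y+1296}$ (valid since $d\geq 3y-36$), this restricts $y$ to the odd integers $3\leq y\leq 23$. For each such $y$ one factors $y+1296$ and checks that no divisor $d\leq\sqrt{y+1296}$ satisfies both $d\equiv -36\pmod y$ and $(y+1296)/d\equiv -36\pmod y$, so no admissible $(x,y,z)$ exists.

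The main obstacle is the case $a=6$: unlike $a=1,2$ there is no single clean obstruction, and one must genuinely verify all eleven admissible values of $y$, each reducing to the factorization of an integer near $1300$ together with a short congruence check. The conceptual content lies entirely in the reduction to $(xy-a^2)(yz-a^2)=y+a^4$ and the bound $y\leq a^2$, which convert an a priori infinite search into a bounded one; confirming the completeness of the resulting finite check is the remaining work. I note that the hypothesis ``$a^2+1$ prime'' plays no role in the argument itself: it merely isolates the cases not already settled by Proposition \ref{pro.G(a,b)}, which handles every $a$ with $a^2+1$ composite.
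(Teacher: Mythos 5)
Your proof is correct, and it arrives at the same finite bound $y \le a^2$ as the paper, but through a different key identity. The paper rewrites $xyz - a^2x - a^2z = 1$ additively as $xz(y-a^2) + a^2(x-1)(z-1) = 1 + a^2$, deduces $y \le a^2$ because the left side exceeds $1+a^2$ once $y \ge a^2+1$, and then for $a=6$ eliminates $y$ divisible by $2$ or $3$ and performs a two-dimensional search over the remaining $y$ and all $x \in [2,36]$, together with a separate limiting argument to dispose of the case $x,z > 36$. Your multiplicative identity $(xy-a^2)(yz-a^2) = y+a^4$ buys two things: the bound on $y$ falls out of comparing the smaller factor with $\sqrt{y+a^4}$, and, more usefully, for each admissible $y$ the verification collapses to a one-dimensional divisor-and-congruence check on the single integer $y+a^4$, with no extra argument needed for large $x,z$. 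Your witnesses for part (ii) agree with the paper's for $a=4,10,26$ (up to the $x \leftrightarrow z$ symmetry of the polynomial); for $a=14$ you give $(5,61,9)$ where the paper gives $(169,9,25)$, and both check out. One small point of rigor: when you argue that both factors are positive, you should also note (it is immediate) that a positive product excludes factors of opposite sign, not merely both factors being nonpositive. Finally, your closing observation that the primality of $a^2+1$ plays no role in the argument, serving only to exclude the cases already settled by Proposition \ref{pro.G(a,b)}, matches the paper's framing exactly. Like the paper, you ultimately defer a finite computation in the case $a=6$, but your version of it (factoring eleven integers near $1300$ and checking congruences) is smaller and more precisely specified than the paper's.
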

\begin{proof}
(i) When $(a,b)=(1,1)$, use Lemma \ref{lem.Anmissingvalues}, or argue as follows. Consider the equation $xyz-x-z=N$, which we rewrite as $xz(y-1) +(x-1)(z-1)= N+1$. When $x,y,z>1$, we find that 
$ xz(y-1) +(x-1)(z-1)\geq 5$. Thus the equation cannot be solved when $N \leq 3$.

 Consider the equation $xyz-4x-4z=1$, which we rewrite as $xz(y-4) +4(x-1)(z-1)= 5$. When $x,y,z>1$, we find that 
$ xz(y-4) +4(x-1)(z-1)> 5$ unless $y=2,3,4$.   The equation  $xyz-4x-4z=1$ is impossible when $y$ is even.
When $y=3$, we have $3xz-4x-4z=1$. It follows that $x=4X-3$ and $z=4Z-1$ with $X,Z \geq 1$, leading to the equation
$12XZ - X + 5Z =2$. It is clear that $(12X+5)Z = X+2$ is impossible.

 Consider the equation $xyz-36x-36z=1$, which we rewrite as $xz(y-36) +36(x-1)(z-1)= 37$. When $x,y,z>1$, we find that 
$ xz(y-36) +36(x-1)(z-1)\geq 40$ unless $y=2,\dots,36$.   
The equation  $xyz-36x-36z=1$ is impossible when $y$ is a multiple of $2$ or $3$.  This leaves $y=5, 7, 11, 13, 17, 19, 23, 25, 29$.
Check that for each such $y$ and for each $x \in [2,\dots,36]$, the 
equation $xyz-36x-36z=1$ cannot be solved. Then 
the equation has no solutions, because if both $x,z >36$, 
the equation $y -36/z-36/x=1/xz$ is not solvable.

(ii) When $a=4$, take $(x,y,z)= (5,5,9)$. 
When $a=10$, take $(x,y,z)= (17,17,9)$.   
When $a=14$, take $(x,y,z)= (169,9,25)$.  
When $a=26$, take $(x,y,z)= (5,149,49)$. 
\end{proof}

\begin{lemma} \label{lem.G(a,b))}
Let $G=G(a,b)$ be the graph introduced in \ref{ex.P3}. 
\begin{enumerate}[\rm (a)]
\item Assume that $(x_0,y_0,z_0) \in {\mathbb Z}_{>0}^3$ is such that $d_G(x_0,y_0,z_0)=0$.
Let $\delta:=\gcd(x_0,z_0)$. Then $(x_0/\delta,\delta y_0,z_0/\delta)$ is also a solution of the equation $d_G(x,y,z)=0$.
When $\delta=1$, then $x_0$ divides $b^2$ and $z_0$ divides $a^2$.

\item Let $x_0$ be any positive divisor of $b^2$. Let $z_0$ be any positive divisor of $a^2$. Let $y_0=a^2/z_0+b^2/x_0$. Then 
$(x_0,y_0,z_0)$ is such that $d_G(x_0,y_0,z_0)=0$.

\item 
We have $0 \in  V_{d_{G(a,b)}}(1)$, and when $a,b>1$,  $0 \in  V_{d_{G(a,b)}}(2)$. 
If either $a$ or $b$ equals $1$, 
then $0 \in V_{d_{G(a,b)}}(2)$ except when $(a,b)=(1,1)$ or $(1,2)$.

\item We have $0 \in  V_{{G(a,b)}}(1)$, and when $a,b>1$,  $0 \in  V_{{G(a,b)}}(2)$.
\if false
Let $(x_0,y_0,z_0) \in {\mathbb Z}_{>0}^3$ be such that $d_G(x_0,y_0,z_0)=0$. If $\gcd(x_0,z_0)=1$ or $a$ and $b$ are not both even,
then the group $\Phi$ associated with the matrix $M_G(x_0,y_0,z_0)$ is cyclic. In particular, 
\fi
\end{enumerate}

\end{lemma}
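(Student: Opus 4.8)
The plan is to treat the four parts in turn, throughout using that $d_G(x,y,z)=xyz-a^2x-b^2z$, so that $d_G(x_0,y_0,z_0)=0$ is the identity $x_0y_0z_0=a^2x_0+b^2z_0$. Parts (a) and (b) are direct substitutions. For (a), with $\delta=\gcd(x_0,z_0)$ one computes $(x_0/\delta)(\delta y_0)(z_0/\delta)=x_0y_0z_0/\delta=(a^2x_0+b^2z_0)/\delta=a^2(x_0/\delta)+b^2(z_0/\delta)$, and all three coordinates are positive integers since $\delta\mid x_0$ and $\delta\mid z_0$; when $\delta=1$ I would rewrite the relation as $x_0(y_0z_0-a^2)=b^2z_0$ and as $z_0(x_0y_0-b^2)=a^2x_0$, so that $\gcd(x_0,z_0)=1$ yields $x_0\mid b^2$ and $z_0\mid a^2$. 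Part (b) is the reverse computation: since $z_0\mid a^2$ and $x_0\mid b^2$ the number $y_0=a^2/z_0+b^2/x_0$ is a positive integer, and $x_0y_0z_0=a^2x_0+b^2z_0$ follows on expanding.

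For (c), I would exhibit explicit zeros of the polynomial. The triple $(1,a^2+b^2,1)$ gives $0\in V_{d_G}(1)$ for all $a,b\ge 1$, and the triple $(b,a+b,a)$, all of whose coordinates are $\ge 2$, gives $0\in V_{d_G}(2)$ whenever $a,b>1$. The outstanding case $a=1$ or $b=1$ is exactly Proposition \ref{pro.G(a,b)} applied to $f=xyz-a^2x-b^2z$, i.e. with $(a^2,b^2)$ in the role of its $(a,b)$ (swapping $x\leftrightarrow z$ if needed so the larger coefficient comes first). Since $a^2,b^2$ are squares and one of them equals $1$ in this case, its excluded triples $(1,1),(2,1),(4,1)$ translate into precisely $(a,b)\in\{(1,1),(1,2),(2,1)\}$, i.e. the stated exceptions up to the symmetry $a\leftrightarrow b$. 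A self-contained alternative, avoiding Dirichlet-type input, is to note that for $a\ge 3$ the square $a^2$ has a divisor $k\ge 3$ with $k+1$ composite (take $k$ an odd prime factor of $a$, so that $k+1$ is even, or $k=8$ when $a$ is a power of $2$), and then to scale the primitive solution with $y$-coordinate $k+1$ by a proper divisor of $k+1$, as permitted by the reverse of (a).

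Finally, for (d) I would build explicit arithmetical structures. By the fact, recalled when arithmetical structures were introduced, that any arithmetical structure $(M,R)$ has $M$ positive semi-definite of rank $n-1$, it suffices to equip a solution of $d_G=0$ with a positive integer kernel vector $R$ of coprime entries; cyclicity of $\Phi$ then reduces, via the Smith normal form description at the start of Section 2, to the statement that for $n=3$ and rank $2$ one has $\Phi$ cyclic iff the gcd of the entries of $M_G$ equals $1$. For $0\in V_G(1)$ I take $(1,a^2+b^2,1)$ with $R={}^t(a,1,b)$; the $(2,2)$-cofactor of $M_G$ is $1$, so $M_G^*=|\Phi|\,R\,{}^tR$ forces $|\Phi|=1$ and $\Phi$ is trivial. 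For $0\in V_G(2)$ with $a,b>1$ I take instead the solution $(b^2,a+1,a)$ from part (b), whose coordinates are all $\ge 2$, together with $R={}^t(b,b,1)$: the entries of $M_G$ have gcd $\gcd(a,a+1,b)=1$, so $\Phi$ is cyclic (in fact $\Phi\cong{\mathbb Z}/a{\mathbb Z}$, the $(3,3)$-cofactor giving $|\Phi|=a$).

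The main obstacle I anticipate is cyclicity in (d) for $a,b>1$: the obvious zero $(b,a+b,a)$ has entry-gcd $\gcd(a,b)$ and produces a non-cyclic group (for instance $({\mathbb Z}/2{\mathbb Z})^2$ when $a=b=2$), so not every solution of $d_G=0$ carries a cyclic structure. The device that makes the argument uniform in $a,b$ is to force the two consecutive integers $a$ and $a+1$ to appear among the entries of $M_G$ by choosing $z_0=a$ and $x_0=b^2$ (whence $y_0=a+1$); their coprimality kills the entry-gcd irrespective of $\gcd(a,b)$. The remaining mild subtlety, the case $a=1$ or $b=1$ of (c), I expect to settle cleanly by the reduction to Proposition \ref{pro.G(a,b)}.
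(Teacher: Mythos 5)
Your proposal is correct. For (a), (b), and (c) you are essentially on the paper's own path: the paper leaves (a) and (b) to the reader, and for (c) it uses the single zero $(b^2,2,a^2)$ and then, when $a=1$, inlines exactly the divisor argument (write $x=cz$, get $c\mid b^2$, and find a divisor of $b^2$ whose successor is composite unless $b^2\in\{1,4\}$) that constitutes the proof of Proposition \ref{pro.G(a,b)}; your reduction to that proposition, with $(a^2,b^2)$ in the role of its $(a,b)$, is the same mathematics phrased as a citation, and you are right that the listed exceptions $(1,1),(1,2)$ must be read up to the symmetry $a\leftrightarrow b$. The genuine divergence is in (d). The paper sticks with $(b^2,2,a^2)$, whose entry gcd is $\gcd(a,b,2)$, so it obtains cyclicity only when $a,b$ are not both even, and must then split the both-even case into sub-cases (both powers of $2$: take $z_0=a^2/2$, so $y_0=3$; an odd prime $p$ dividing $a$: take $x_0=2$, $z_0=p$; plus the unstated symmetric case for $b$), re-checking the entry gcd each time. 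Your choice $(b^2,a+1,a)$ removes all case distinctions at once, since the consecutive entries $a$ and $a+1$ force the entry gcd to be $1$ for every $a,b>1$; moreover your explicit kernel vectors ${}^t(b,b,1)$ and, for $V_G(1)$, ${}^t(a,1,b)$ with the zero $(1,a^2+b^2,1)$, certify positive semi-definiteness of rank $2$ directly (an arithmetical structure is automatically positive semi-definite of rank $n-1$) and, via $M^*=|\Phi|\,R\,{}^t\!R$ from \ref{emp.classical}(b), pin down the groups exactly: trivial for $V_G(1)$ and ${\mathbb Z}/a{\mathbb Z}$ for $V_G(2)$. The paper's route buys economy, in that one solution serves all four parts; yours buys uniformity in $a,b$ and sharper output, since the groups are computed rather than merely shown cyclic.
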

\begin{proof} The proofs of (a) and (b) are left to the reader. 

(c) We always have the solution $x=b^2$, $y=2$, and $z=a^2$. So $ 0 \in V_{d_{G(a,b)}}(1)$, and when $a,b>1$, 
$ 0 \in V_{d_{G(a,b)}}(2)$. 
Suppose now that $a=1$, and let us show that we can solve
$xyz-x-b^2z=0$ with $x,y,z \geq 2$ if and only if $b>2$. If we can solve this equation, then $z$ must divide $x$. Write $x=cz$. We need to solve  $cyz-c-b^2=0$, so $c$ divide $b^2$.
This equation can be solved if we can find integers $y,z\geq 2$ such that
$yz=1+b^2/c$.
We claim that given any integer $m>1$, $m\neq 2,4$, we can find a divisor $d$ of $m$ such that $d+1$ is not prime. This is clear if $m$ is divisible by an odd prime. 
When $m$ is a power of $2$, this is true as soon as $8$ divides $m$. We apply this claim to $b^2$, and we find $c$ which divides $b^2$ such that $1+b^2/c$ is not prime, unless $b^2=1,4$, as desired.

(d) Consider again  the solution $x=b^2$, $y=2$, and $z=a^2$.
The associated matrix 
is  positive semi-definite. The associated group $\Phi$ is cyclic 
when $a$ and $b$ are not both even. To see this, note that the group $\Phi$ in this case needs at most two generators, and is cyclic if and only of the greatest common divisor of the coefficients of the matrix is equal to $1$. Since $y=2$ is a coefficient of our matrix, it is coprime to $a$ or $b$. Thus, $0 \in V_{G(a,b)}(1) $, and  $0 \in V_{G(a,b)}(2) $ when $a,b>1$ are not both even.

 Suppose that $a$ and $b$ are both even. 
When both are powers of $2$, choose $x_0=b^2$ and $z_0=a^2/2>2$. Then $y_0= a^2/z_0+b^2/x_0=3$ is coprime to $2$.
It follows that the $\gcd$ of the coefficients of $M_G(x_0,y_0,z_0)$ is $1$, and the associated group $\Phi$ is cyclic.
When there exists an odd prime $p$ which divides $a$, choose $x_0=2$ and $z_0=p>2$. Then $y_0= a^2/z_0+b^2/x_0>2$.
\end{proof}
\fi



\begin{example} \label{ex.K3} \label{ex.C3}
Consider the cycle $G={\mathcal C}_3$ on $n=3$ vertices. Then 
$$
\if false
M_G=\left( \begin{array}{ccc}
x&-1&-1  \\
-1&y&-1\\
-1&-1&z
\end{array}
\right)\quad \text{and } \quad 
\fi 
d_G(x,y,z)=xyz - x - y - z - 2.$$
We noted in Proposition \ref{pro.extendedDynkin} (a) that $V_{d_G}(2) $ contains all multiples of $3$.
\if false
Since $d_G(2,2,z)=3(z -2)$, the set $V_{d_G}(2) \cap {\mathbb N}$ contains all positive multiples of $3$.
Since  $d_G(2,3,z+2)=5z+3$, the set $V_{d_G}(2) \cap {\mathbb N}$ contains all positive integers congruent to $3$ or $8$ modulo $10$.
Since  $d_G(3,3,z+2)=8z+8$, the set $V_{d_G}(2) \cap {\mathbb N}$ contains all positive multiples  of $8$.

We have   $d_G(4,4,z)=5(3z -2)$, so that $V_{d_G}(2) \cap {\mathbb N}$ contains all positive multiples $k$ of $5$ such that $k+1$ is divisible by $3$.
We have $d_G(4,9,z)=5(7z -3)$, so that 
when $z=3u+2$, $V_{d_G}(2) \cap {\mathbb N}$ contains the multiple $k=5(7z -3)$ of $5$  such that $k+2$ is divisible by $3$.
\fi
It is not computationally clear in this example that the complement 
of $V_{d_G}(2)$ in ${\mathbb Z}_{\geq 0}$
is finite. This complement is likely to contain $2201$ values in the interval  $[1, 2 \cdot 10^6]$.
\if false
 ($MV$ stands for {\it missing values}). We computed the set $MV_{d_G}(2) \cap \{1,\dots,N\}$ explicitly up to $N=2000000$. In the table below, we include in Column $2 $
the size of the set $MV_{d_G}(2) \cap \{1,\dots,N\}$ up to the $N$ given in Column $1$. The column $\#MV2$ gives the number of even numbers in that set.
Similarly the column $\#MV5$ (resp.\ $\#MVP$) gives the number of multiples of $5$ in that set (resp.\ the number of prime numbers in the set). As we noted in Proposition \ref{pro.complexity} (a),
the set $MV_{d_G}(2)$ does not contain any multiples of $3$ since $3$ is the number of spanning trees of ${\mathcal C}_3$.

 \renewcommand{\arraystretch}{1.3}
$$
\begin{tabular}{|c|c|c|c|c|}
\hline
$N$ & $\#MV$ & $\#MV2$   & $\#MV5$  &  $\#MVP$ \\
\hline
1000000 & 1791 & 261  (14.57\%) & 280 (15.63 \%) & 378 (21.11\%)\\
\hline
1500000 & 2036 & 283 (13.90\%) & 319 (15.67\%) & 427 (20.97\%)\\
\hline
2000000 & 2201 & 293 (13.31\%) & 344 (15.63\%) & 473 (21.49\%) \\
\hline
\end{tabular}
$$
\smallskip
\fi

\if false
Indeed, up to $N=700000$, there are $1610$ values in the set  $MV_{d_G}(2)$, with $348$ primes among them.

Indeed, up to $N=800000$, there are $1678$ values in the set  $MV_{d_G}(2)$, with $360$ primes among them.

Indeed, up to $N=900000$, there are $1732$ values in the set  $MV_{d_G}(2)$, with $370$ primes among them.

Indeed, up to $N=1000000$, there are $1791$ values in the set  $MV_{d_G}(2)$, with $378$ primes among them.

Indeed, up to $N=1100000$, there are $1847$ values in the set  $MV_{d_G}(2)$, with $391$ primes among them.

Indeed, up to $N=1200000$, there are $1897$ values in the set  $MV_{d_G}(2)$, with $400$ primes among them.

Indeed, up to $N=1300000$, there are $1943$ values in the set  $MV_{d_G}(2)$, with $406$ primes among them.

Indeed, up to $N=1500000$, there are $2036$ values in the set  $MV_{d_G}(2)$, with $427$ primes among them.

Indeed, up to $N=1800000$, there are $2128$ values in the set  $MV_{d_G}(2)$, with $440$ primes among them.

Indeed, up to $N=2000000$, there are $2201$ values in the set  $MV_{d_G}(2)$, with $473$ primes among them.

More precisely, $MV_{d_G}(2)$ up to $N$ contains $268$ values $k$ congruent to $5$ modulo $10$ and such that $k+2$ is divisible by $3$. It also contains $54$ values $k$ divisible by $10$ and such that $k+2$ is divisible by $3$.
\fi

Theorem \ref{thm.main1}  shows that $V_{d_G}(1)= {\mathbb Z}_{\geq 0}$. 
The recent preprint \cite{CS} studies the set $V_f(1)$ of values taken by the related polynomial 
$f(x,y,z)=xyz+x+y+z$. The authors suggest on page $3$ that the complement of $V_f(1)$ in ${\mathbb Z}_{\geq 0}$ is infinite.
\if false
We expect that this estimate is far from optimal. We did calculations for N up to the 250
millionth prime (5336500537) and found 2014 prime numbers which are not the sum plus
the product of three positive integers.
\fi

 Consider more generally the polynomial $f(x,y,z)=xyz-ax-by-cz$, where $a \geq b \geq c >0$ are fixed integers. {\it Then $V_f(2) ={\mathbb Z}_{\geq 0}$ if one of the integers $a+1$, $b+1$, or $c+1$, is not prime.} Indeed, assume without loss of generality 
 that $c+1$ is not prime. Then the equation $xy-c=1$ can be solved with integers $x_0,y_0 \geq 2$. For any integer $N\geq 0$, set $z_0:=N+ax_0+by_0$. Then $f(x_0,y_0,z_0)=N$. 
 
When $(a,b,c)=(2,1,1)$, the set of values in the complement of $V_f(2)$ in ${\mathbb Z}_{\geq 0}$  might consist only of $ \{ 1, 2, 5, 61 \}$.
 When $(a,b,c)=(2,2,1)$, the set of values in the complement of $V_f(2)$ in ${\mathbb Z}_{\geq 0}$  might consist only of $\{3, 11, 27, 251\}$.
 When $(a,b,c)=(10,1,1), (6,1,1),$ or $(4,2,1)$, it is likely that $V_f(2)={\mathbb Z}_{\geq 0}$.

Add  edges to ${\mathcal C}_3$ to obtain the graph $G={\mathcal C}_3(e_1,e_2,e_3)$   with matrix
$$M_G=\left( \begin{array}{ccc}
x&-e_3&-e_2  \\
-e_3&y&-e_1\\
-e_2&-e_1&z
\end{array}
\right) \mbox{ and } d_G(x,y,z)=xyz - e_1^2x  -e_2^2y-e_3^2z-2e_1e_2e_3.
$$
When $(e_1,e_2,e_3)=(2,1,1)$, computations indicate that the set of values in the complement of $V_{d_G}
 (2)$ in ${\mathbb Z}_{\geq 0}$ is very small
 and might consist only of $\{8, 56, 248\}$.
 
 When $(e_1,e_2,e_3)=(2,2,2)$, the set $V_{d_G}
 (2)$ contains all even values. Indeed, $(x,y,z)=(2,4,z)$ produces all values divisible by $4$, and $(x,y,z)=(2,3,z)$ produces all values of the form $4m+2$. 
It is not computationally clear in this case that the set of values in the complement of $V_{d_G}
 (2)$ in ${\mathbb Z}_{\geq 0}$ is finite. Theorem \ref{thm.density} (b) shows that
  $V_{d_G}
 (2)$ is dense in ${\mathbb Z}_{\geq 0}$. 
 \end{example}
 
\if false
\begin{example} \label{ex.C4}
Consider the cycle $G$ on $n=4$ vertices. 
\if false
Then
$$M_G=\left( \begin{array}{cccc}
w&-1&0&-1 \\
-1&x&-1&0\\
0&-1&y&-1\\
-1&0&-1&z
\end{array}
\right)$$ 
and $$d_G(w,x,y,z)=
w x y z - w x - w z - x y - y z.$$
Since $d_G(w,2,2,2)=4w-8$, we find that 
$V_{d_G}(2)$ contains all positive multiples of $4$.
\fi
Computations produced a set of $325$ positive integers up to $10^6$ which 
contains the complement of $V_{d_G}(2) \cap [1,10^6]$. The four largest values in that set are 
$86899,  184549, 204997$,  and $858811$. The paucity of large values in this set suggests that the set of missing values might be finite.

Consider now the cycle $G$ on $n=5$ vertices. Computations produced a set of $123$ values up to $10^5$
which contains the complement of $V_{d_G}(2)$ in $[1,10^5]$. The largest two values in this set are $5422$ and $ 12489$.
This data suggests that the set of missing values might be finite in this case also.
\end{example}
\fi

\if false
\begin{example} \label{ex.stars}
The stars $ {\mathcal S}_n$ on $n$ vertices are such that $V_{d_{{\mathcal S}_n}}(2) $ is dense in ${\mathbb Z}_{\geq 0}$ (see Theorem \ref{thm.denselist}).  The case of ${\mathcal S}_4$ (the Dynkin diagram $D_4$)
is discussed in Example \ref{ex.Dynkin4}, where computations indicate that the complement of ${\mathcal S}_4$ in ${\mathbb Z}_{\geq 0}$ is finite.
In the case of the star ${\mathcal S}_5$
(which is equal to the extended Dynkin diagram $\tilde{D_4}$),   computations leave open the possibility that the complement of $V_{d_{{\mathcal S}_5}}(2) $ in ${\mathbb Z}_{\geq 0}$ might not be finite.

The extended stars $ {\mathcal S}_n^+$, defined in the introduction,  are also such that $V_{d_{{\mathcal S}_n^+}}(2) $ is dense in ${\mathbb Z}_{\geq 0}$. 
 The graph ${\mathcal S}_4^+$ is equal to the Dynkin diagram $D_5$, and Example \ref{ex.Dynkin4}
suggests that the complement of $V_{d_{{\mathcal S}_4^+}}(2) $ in ${\mathbb Z}_{\geq 0}$ is finite. 
On the other hand, computations leave open the possibility that the complement of $V_{d_{{\mathcal S}_5^+}}(2) $ in ${\mathbb Z}_{\geq 0}$ might not be finite. Indeed, the complement in this case seems substantial, and its intersection with $[1,10^5]$ might contain as many as $6000$ elements. 
\end{example}

\begin{example}
Consider the complete bipartite graph $G={\mathcal K}(2,3)$, with $n=5$. 
Computations show that $d_G$ might fail to represents a set ${\mathcal C}$ of $693$ values in $[1,\dots, 10^5]$.

\if false 
Very surprisingly, $286$ of the potential $693$ missing values are primes (about $41\%$). 
For instance, this polynomial represents only the primes $3,47$, and $83$ among all primes up to $100$.
It fails to represents  $2 \cdot 3,2 \cdot 47$, and $2 \cdot 83$. 
In fact, the set ${\mathcal C}$ contains only $76$ even numbers, and $40 $ of them are of the form `twice a prime'. 
The set ${\mathcal C}$ contains only $37$ multiples of $3$ (with largest $4569$), and $19 $ of them are of the form `three times a prime'.
\fi  
\end{example}
\fi
\if false
We end this section by exhibiting arithmetical structures $(M,R)$ on stars and complete bipartite graphs. 
Once the order of the associated group $\Phi$ is computed, Proposition \ref{pro.arithmeticalstructure} can be used to predict known elements in the set $V_{d_G}(2)$. For instance, in the case of the bipartite graph $G={\mathcal K}(2,3)$,   Proposition \ref{pro.bipartite} implies that $V_{d_G}(2)$ contains all positive multiples of $4$, $18$, $25$, and $27$.
Proposition \ref{pro.bipartite} (b), with $r=3$ and $s=2$, describes a structure with
 a cyclic group $\Phi={\mathbb Z}/3{\mathbb Z}$ and this shows that $0 \in V_G(2)$.

\if false
In the entries of $V_{d_G}(2)$ up to $50$, we also find 
$3, 14, 33,$ and $47$, but $V_{d_G}(2)$ does  not contain all multiples of these integers.

Indeed, Proposition \ref{pro.bipartite} (a) implies, using the factorization $4=n-1=2 \cdot 2$, that $V_{d_G}(2)$ contains all positive multiples of $4=2^{n-3}$. 

The structure in Proposition \ref{pro.bipartite} (b) with $r=3$ and $s=2$,
\if false
$$\left( \begin{array}{ccccc}
  2 &- 1  &-1  &-1&  0\\
 -1 &3  &0 & 0  &-1\\
 -1 & 0&3 & 0  &-1\\
 -1 & 0 & 0 &3  &-1\\
 0  &-1  &-1  &-1& 2 
 \end{array}
 \right)
 \left( \begin{array}{c} 3 \\ 2\\ 2\\ 2\\ 3
 \end{array}
 \right)=0
 $$
 \fi 
has cyclic group $\Phi={\mathbb Z}/3{\mathbb Z}$ and shows that $0 \in V_G(2)$, and that $V_{d_G}(2)$ contains all positive multiples of $27$. The same structure when $r=2$ and $s=3$ is the Laplacian, with number of spanning trees $18$.

Proposition \ref{pro.bipartite} (c) produces a structure with $\Phi =(0)$ and whose vector $R$ has a coefficient equal to $5$, thus $V_{d_G}(2)$ contains all positive multiples of $25$.
\fi

  \begin{proposition} \label{pro.star}
Let $G$ be the star ${\mathcal S}_n$ on $n\geq 4$ vertices. 
If $n-1 $ is divisible by an integer $\ell$ with  $1<\ell<n-1$, then there exists on $G$ 
an arithmetical structure $(M,R)$
with associated group $\Phi=({\mathbb Z}/\ell{\mathbb Z})^{n-3}$, and $V_{d_{G}}(2)$ contains all positive multiples of $\ell^{n-3}$.
\end{proposition}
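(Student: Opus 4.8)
The plan is to exhibit one explicit arithmetical structure on $G={\mathcal S}_n$ whose diagonal entries are all at least $2$, to compute its associated group by a Smith-normal-form reduction, and then to invoke Proposition \ref{pro.arithmeticalstructure}(b). Label the central vertex $v_1$ and the leaves $v_2,\dots,v_n$, and set $m:=(n-1)/\ell$; since $1<\ell<n-1$ and $\ell\mid(n-1)$, we have $m>1$ (in fact the existence of such an $\ell$ forces $n-1$ composite, so $n\geq 5$ and $n-3\geq 2$). I would take $M:=M_G(m,\ell,\ell,\dots,\ell)$ and ${}^tR:=(\ell,1,1,\dots,1)$. The equations $MR=0$ reduce to the leaf equations $-r_1+\ell r_i=0$ (satisfied since $r_1=\ell$ and $r_i=1$) and the center equation $m r_1=\sum_{i=2}^n r_i$, i.e.\ $m\ell=n-1$, which holds by construction. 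All diagonal entries are $\geq 2$ and $\gcd(\ell,1,\dots,1)=1$, so $(M,R)$ is a genuine arithmetical structure; in particular $M$ is positive semi-definite of rank $n-1$.

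Next I would pin down the order of $\Phi:=\Phi_M$. By \ref{emp.classical}(b), $M^*=|\Phi|\,R\,({}^tR)$, so comparing $(1,1)$-entries gives $\det(M_{11})=|\Phi|\,r_1^2=|\Phi|\,\ell^2$. Here $M_{11}=\operatorname{Diag}(\ell,\dots,\ell)$ is of size $n-1$ with determinant $\ell^{n-1}$, whence $|\Phi|=\ell^{n-3}$.

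The substantive step is to upgrade this order computation to the isomorphism $\Phi\simeq({\mathbb Z}/\ell{\mathbb Z})^{n-3}$. I would present $\Phi$ as the torsion of $\operatorname{coker}(M)$ with generators $e_1,\dots,e_n$ and the columns of $M$ as relations: $\ell e_i=e_1$ for $i\geq 2$ and $m e_1=\sum_{i\geq 2}e_i$. Using $\ell e_2=e_1$ to eliminate $e_1$ and setting $f_i:=e_i-e_2$ for $i\geq 3$, the leaf relations become $\ell f_i=0$, while the center relation, after substituting $m\ell=n-1$, collapses exactly to $\sum_{i=3}^n f_i=0$ — the coefficient of $e_2$ being $m\ell-(n-1)=0$ is precisely the cancellation that makes the argument work. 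Hence the torsion is $({\mathbb Z}/\ell{\mathbb Z})^{n-2}$ (coordinates indexed by $3,\dots,n$) modulo the diagonally embedded ${\mathbb Z}/\ell{\mathbb Z}$, and the difference map $(x_3,\dots,x_n)\mapsto(x_3-x_n,\dots,x_{n-1}-x_n)$ identifies this quotient with $({\mathbb Z}/\ell{\mathbb Z})^{n-3}$. This is the only delicate point; I expect it to be the main obstacle, since computing $|\Phi|$ alone does not determine the group type.

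Finally, for the assertion about $V_{d_G}(2)$, I would apply Proposition \ref{pro.arithmeticalstructure}(b) to $(M,R)$ at a leaf index $i$, where $r_i=1$. Because $a_{\min}=\min(m,\ell)\geq 2$, that proposition yields that $V_{d_G}(a_{\min})$ — and therefore $V_{d_G}(2)\supseteq V_{d_G}(a_{\min})$ — contains every integer $t\,|\Phi|\,r_i^2=t\,\ell^{n-3}$ for $t\geq 0$, i.e.\ all positive multiples of $\ell^{n-3}$. The verification of the structure, the order computation, and this last step are all routine; the group-structure identification is where the real content lies.
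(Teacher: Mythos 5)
Your proof is correct, and it is built on exactly the same arithmetical structure as the paper's: $M$ with diagonal $((n-1)/\ell,\ell,\dots,\ell)$, ${}^tR=(\ell,1,\dots,1)$, followed by the same appeal to Proposition \ref{pro.arithmeticalstructure} (b) at a leaf (where $r_i=1$) to convert $|\Phi_M|=\ell^{n-3}$ into the statement about $V_{d_G}(2)$. Where you genuinely differ is in identifying the group $\Phi_M$: the paper disposes of this in one line by citing Corollary 2.5 of \cite{Lor89}, a structure result for arithmetical trees, from which it extracts both $|\Phi|=\ell^{n-3}$ and the fact that $\Phi$ is killed by $\ell$; you instead compute the cokernel presentation directly (eliminating $e_1$ via $e_1=\ell e_2$, passing to the differences $f_i=e_i-e_2$, and observing that the center relation collapses to $\sum_{i=3}^n f_i=0$), and then identify $(\mathbb{Z}/\ell\mathbb{Z})^{n-2}$ modulo the diagonal with $(\mathbb{Z}/\ell\mathbb{Z})^{n-3}$ via the difference map, whose kernel is exactly the diagonal. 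Your route is longer but self-contained, and it has a real advantage: for composite $\ell$, knowing only that a finite abelian group has order $\ell^{n-3}$ and is killed by $\ell$ does not determine its isomorphism type (order $16$ killed by $4$ allows $\mathbb{Z}/4\times\mathbb{Z}/2\times\mathbb{Z}/2$), so the paper's proof as written leans on the precise content of the cited corollary, whereas your presentation computation pins down the structure unconditionally. One small remark: your adjoint-based order computation via \ref{emp.classical} (b) is redundant, since the presentation argument already yields the order, but it is harmless as a cross-check.
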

\begin{proof} Suppose that $v_1 $ is the vertex of degree $n-1$.  The matrix $M$ has diagonal 
$((n-1)/\ell, \ell, \dots, \ell)$. The transpose of the vector $R$ is $(\ell,1,\dots,1)$. 
Since $G$ is a tree, the order of $\Phi$ can be computed using \cite{Lor89}, Corollary 2.5, 
and is found to be $\ell^{n-3}$. The same corollary implies that $\Phi$ is killed by $\ell$. The last statement follows from Proposition \ref{pro.arithmeticalstructure}.
\end{proof}
 
 \begin{proposition} \label{pro.bipar}
 Any arithmetical structure $(M,R)$ on the star ${\mathcal S}_n$ on $n\geq 4$ vertices
 can be used to produce an arithmetical structure $(M',R')$ on the complete bipartite graph ${\mathcal K}(2,n-1)$ on $n+1$ vertices as follows. Let $v_0, \dots, v_{n-1}$ denote the vertices of the star, with $v_0$ the unique vertex of degree $n-1$.
 Denote by $(a_0,a_1,\dots, a_{n-1})$ the diagonal of $M$. 
 Let $^tR=(r_0,\dots, r_{n-1})$. Let  $w_0, \dots, w_{n-1}, w_n$ denote the vertices of the graph ${\mathcal K}(2,n-1)$, with $w_0$ and $w_n$ the only two vertices of degree $n-1$. Then $(M',R')$ is an arithmetical structure on ${\mathcal K}(2,n-1)$ when $M'$ has 
 diagonal $(a_0,2a_1,\dots, 2a_{n-1}, a_0)$ and 
$^tR'=(r_0,\dots, r_{n-1},r_0)$.

Let $\Phi$ and  $\Phi'$ denote the groups associated to $(M,R)$ and $(M',R')$, respectively.
Then
$|\Phi'|= 2^{n-2}a_0|\Phi|$.
 \end{proposition}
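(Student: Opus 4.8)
The plan is to check directly that $(M',R')$ obeys the three defining conditions of an arithmetical structure on $\mathcal{K}(2,n-1)$, and then to extract $|\Phi'|$ from the adjoint identity $(M')^*=|\Phi'|R'({}^t\!R')$ of \ref{emp.classical}(b).

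First I would record what $MR=0$ says on the star. With $v_0$ the central vertex, $M$ has first row $(a_0,-1,\dots,-1)$ and is diagonal, equal to $\Diag(a_1,\dots,a_{n-1})$, off the first row and column. Thus $MR=0$ is equivalent to
$$a_0r_0=\sum_{i=1}^{n-1}r_i \qquad\text{and}\qquad a_ir_i=r_0\ \ (1\le i\le n-1).$$
Ordering the vertices of $\mathcal{K}(2,n-1)$ as $w_0,w_1,\dots,w_{n-1},w_n$, the two vertices of degree $n-1$ sit in the first and last slots and carry a zero in the $(w_0,w_n)$ entry since they are not adjacent, while each middle vertex $w_i$ is joined to both $w_0$ and $w_n$. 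A direct multiplication then gives $M'R'=0$: the first and last rows each reproduce $a_0r_0-\sum_i r_i=0$, while a middle row gives $-r_0+2a_ir_i-r_0=2(a_ir_i-r_0)=0$. Since $R'=(r_0,r_1,\dots,r_{n-1},r_0)$ is positive and its entries have the same gcd as $(r_0,\dots,r_{n-1})$, namely $1$, and since $M'=M_{\mathcal{K}(2,n-1)}(a_0,2a_1,\dots,2a_{n-1},a_0)$ has positive diagonal, $(M',R')$ is an arithmetical structure, its positive semi-definiteness of rank $n$ being automatic (\cite{Lor89}, Theorem 1.4).

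For the order of $\Phi'$ I would read off the $(0,0)$ entry of $(M')^*=|\Phi'|R'({}^t\!R')$, which gives $|\Phi'|r_0^2=\det(M'_{00})$, where $M'_{00}$ deletes the row and column of $w_0$. Placing $w_n$ first, $M'_{00}$ is a bordered diagonal matrix with border $a_0$, diagonal block $N=\Diag(2a_1,\dots,2a_{n-1})$ and off-diagonal entries all $-1$, so Lemma \ref{lem.matrix} yields
$$\det(M'_{00})=a_0\det(N)-({}^t\!T)\,N^*\,T=2^{n-2}\Big(\prod_{i=1}^{n-1}a_i\Big)\Big(2a_0-\sum_{i=1}^{n-1}\frac{1}{a_i}\Big).$$
The decisive simplification is that the star relations force $a_0=\sum_{i=1}^{n-1}1/a_i$: from $a_ir_i=r_0$ one has $r_i=r_0/a_i$, and substituting into $a_0r_0=\sum_i r_i$ and cancelling $r_0$ gives exactly this. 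Hence $\det(M'_{00})=2^{n-2}a_0\prod_{i=1}^{n-1}a_i$, so that $|\Phi'|r_0^2=2^{n-2}a_0\prod_{i=1}^{n-1}a_i$.

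Finally I would compute $|\Phi|$ in the same manner: the $(0,0)$ entry of $M^*=|\Phi|R({}^t\!R)$ gives $|\Phi|r_0^2=\det(M_{00})=\det(\Diag(a_1,\dots,a_{n-1}))=\prod_{i=1}^{n-1}a_i$. Dividing the relation $|\Phi'|r_0^2=2^{n-2}a_0\prod_i a_i$ by $|\Phi|r_0^2=\prod_i a_i$ cancels $r_0^2$ and the product, yielding $|\Phi'|=2^{n-2}a_0|\Phi|$. The only genuine computation is the determinant of the arrow matrix $M'_{00}$ together with the identity $a_0=\sum_i 1/a_i$; I expect this determinant evaluation to be the main (though modest) obstacle, everything else being bookkeeping.
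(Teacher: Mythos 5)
Your proof is correct. The paper offers no argument to compare against: its proof of this proposition reads only ``The proof is left to the reader,'' together with a remark that the statement extends to ${\mathcal K}(q,n-1)$ for $q\geq 2$. Your write-up supplies exactly the verification the author omitted, and every step checks out: the star relations $a_ir_i=r_0$ and $a_0r_0=\sum_{i=1}^{n-1}r_i$ give $M'R'=0$ (with semi-definiteness automatic by \cite{Lor89}, Theorem 1.4, as the paper's definition of arithmetical structure already notes); reading off the $(0,0)$ entries of $M^*=|\Phi|\,R\,({}^t\!R)$ and $(M')^*=|\Phi'|\,R'\,({}^t\!R')$ from \ref{emp.classical}(b) gives $|\Phi|r_0^2=\prod_{i=1}^{n-1}a_i$ and $|\Phi'|r_0^2=\det(M'_{00})$; and the evaluation $\det(M'_{00})=2^{n-2}\bigl(\prod_{i=1}^{n-1}a_i\bigr)\bigl(2a_0-\sum_{i=1}^{n-1}1/a_i\bigr)$ via Lemma \ref{lem.matrix}, combined with the identity $a_0=\sum_{i=1}^{n-1}1/a_i$ forced by the star structure, yields $|\Phi'|r_0^2=2^{n-2}a_0\prod_{i=1}^{n-1}a_i$, whence $|\Phi'|=2^{n-2}a_0|\Phi|$ upon dividing the two exact equations.
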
 
\begin{proof} The proof is left to the reader. Note that the statement of Proposition \ref{pro.bipar} can be modified to apply to   ${\mathcal K}(q,n-1)$ for any $q \geq 2$.
\end{proof}
 
 \begin{proposition} \label{pro.bipartite}
Let $G$ be the complete  bipartite graph ${\mathcal K}(2,n-2)$ on $n \geq 3$ vertices. 
\begin{enumerate}[\rm (a)]
\item Factor $n-1=ab$ with $a,b\geq 1$. 
For each such factorization, there exists on $G$ 
an arithmetical structure 
with associated group $\Phi=({\mathbb Z}/a{\mathbb Z})^{n-3}$. 
It follows that when $a,b \geq 2$,  $V_{d_{G}}(2)$ contains all positive multiples of $a^{n-3}$ and $b^{n-3}$.
\item Factor $2(n-2)=rs$ with $r,s\geq 1$.
For each such factorization, there exists on $G$ an arithmetical structure 
with associated group 
$\Phi$ of order 
$|\Phi|=sr^{n-4}/2$ if $r$ is odd, and $|\Phi|=2sr^{n-4}$ if $r$ is even.
\item Let $n\geq 5$.
There exists on $G$ an arithmetical structure 
with associated group $\Phi$ of order 
$|\Phi|=n^{n-5}$ when $n$ is odd, and $|\Phi|=4n^{n-5}$ when $n$ is even.
\end{enumerate}
\end{proposition}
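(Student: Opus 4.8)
The plan is to prove each of the three parts by exhibiting an explicit arithmetical structure $(M,R)$ on $G={\mathcal K}(2,n-2)$, verifying directly that $MR=0$ with $R$ a positive primitive vector (so that $(M,R)$ is genuinely an arithmetical structure by the theory recalled in \ref{emp.classical} (b), which also makes $M$ automatically positive semi-definite of rank $n-1$), and then reading off $|\Phi|$ from the identity $M^*=|\Phi|\,R\,(^t\!R)$ of \ref{emp.classical} (b). Writing $p,q$ for the two vertices of degree $n-2$ and $c_1,\dots,c_{n-2}$ for the $n-2$ vertices of degree $2$, the equation $MR=0$ reads $\beta_i\sigma_i=\rho_p+\rho_q$ for each leaf $c_i$ and $\alpha_p\rho_p=\alpha_q\rho_q=\sum_i\sigma_i$ for the two centres, where $\alpha,\beta$ denote the relevant diagonal entries of $M$ and $\rho,\sigma$ the corresponding coordinates of $R$. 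Since the cofactor identity gives $\det(M_{ii})=|\Phi|\,r_i^2$, the order of $\Phi$ will in every case be the determinant of a star-shaped or ${\mathcal K}(2,k)$-shaped principal submatrix divided by $r_i^2$; these two auxiliary determinants I will evaluate once and for all by a Schur-complement computation, namely $\det$ of a star with centre entry $\gamma$ and $k$ leaf-entries $\delta$ equals $\delta^{k-1}(\gamma\delta-k)$, and $\det$ of ${\mathcal K}(2,k)$ with centre entries $\alpha_p,\alpha_q$ and $k$ leaf-entries $\gamma$ equals $\gamma^{k}\alpha_p\alpha_q-\gamma^{k-1}k(\alpha_p+\alpha_q)$.

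For Part (a), given $n-1=ab$ I will take $M$ with diagonal $((n-2)b,\,b,\,a,\dots,a)$ (value $(n-2)b$ on $p$, value $b$ on $q$, value $a$ on every leaf) and $^t\!R=(1,\,n-2,\,b,\dots,b)$. One checks $MR=0$ immediately using $ab=n-1$, and $\gcd$ of the entries of $R$ is $1$ because the first entry is $1$. Removing the vertex $p$ (whose $R$-coordinate is $1$) leaves the star with centre $q$ of entry $b$ and $n-2$ leaves of entry $a$, of determinant $a^{n-3}(ab-(n-2))=a^{n-3}$; hence $|\Phi|=a^{n-3}$. To upgrade this to the isomorphism $\Phi\iso({\mathbb Z}/a{\mathbb Z})^{n-3}$ I will work in $C:={\mathbb Z}^n/\operatorname{Im}(M)\iso{\mathbb Z}\oplus\Phi$: the leaf columns give $a\,e_{c_i}\equiv e_p+e_q$, so the $n-3$ differences $e_{c_i}-e_{c_1}$ ($2\le i\le n-2$) are killed by $a$; eliminating the remaining generators with the two centre relations and using $\gcd(b,n-2)=1$ (which holds since $n-2\equiv-1\pmod b$) shows that the quotient of $C$ by these differences is torsion-free of rank $1$, so these $n-3$ elements generate $\Phi$. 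A finite abelian group generated by $n-3$ elements, of exponent dividing $a$ and of order $a^{n-3}$, must be $({\mathbb Z}/a{\mathbb Z})^{n-3}$. Finally, since the minimal diagonal entry is $\min(a,b)\ge 2$ when $a,b\ge2$, Proposition \ref{pro.arithmeticalstructure} (b) (applied with the $R$-coordinate $1$) shows that $V_{d_G}(2)$ contains all multiples of $a^{n-3}$; running the same construction for the factorization $n-1=ba$ yields the multiples of $b^{n-3}$.

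For Part (b), given $2(n-2)=rs$ I will put diagonal entry $s$ on both centres and $r$ on every leaf, and choose $R$ primitive according to the parity of $r$: if $r$ is even take $^t\!R=(r/2,1,\dots,1,r/2)$, and if $r$ is odd take $^t\!R=(r,2,\dots,2,r)$, the two cases being forced by the relation $\rho=r\sigma/2$ together with $\gcd(R)=1$. In both cases $MR=0$ follows from $rs=2(n-2)$, and removing one leaf leaves ${\mathcal K}(2,n-3)$ with centre entries $s$ and leaf entries $r$, of determinant $2sr^{n-4}$. Dividing by the square of the removed leaf's $R$-coordinate (namely $1$ when $r$ is even and $2$ when $r$ is odd) gives $|\Phi|=2sr^{n-4}$ and $|\Phi|=sr^{n-4}/2$ respectively. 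Part (c), for $n\ge5$, I will handle by an entirely analogous explicit structure, the order $n^{n-5}$ (respectively $4n^{n-5}$) being read off from the same cofactor identity after deleting one vertex, with the factor $4$ for even $n$ arising, exactly as in Part (b), from the normalization of $R$ to a primitive vector.

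The main obstacle is Part (a)'s passage from the \emph{order} $a^{n-3}$ to the \emph{isomorphism type} $({\mathbb Z}/a{\mathbb Z})^{n-3}$: because $a$ need not be prime, knowing that the exponent divides $a$ and that the order equals $a^{n-3}$ is \emph{not} enough (for $a=4$, say, several groups share these invariants), so the generator count coming from the explicit cokernel presentation of $C$ is essential, and it is there that the coprimality $\gcd(b,n-2)=1$ enters crucially. A secondary, purely bookkeeping difficulty, running through all three parts, is the correct normalization of $R$ to a primitive vector and the resulting parity case-distinctions, together with pinning down the precise structure in (c); once the right structure is written down, the verifications of $MR=0$ and of the two auxiliary determinants are routine, and positive semi-definiteness of rank $n-1$ is automatic from \ref{emp.classical} (b).
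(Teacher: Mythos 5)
Your parts (a) and (b) are correct and use exactly the structures in the paper's own proof: in (a) the diagonal $\bigl((n-2)b,\,b,\,a,\dots,a\bigr)$ with $^t\!R=(1,n-2,b,\dots,b)$ (the paper merely orders the vertices differently), and in (b) the symmetric structure with centre entries $s$, leaf entries $r$, and $V=(r,2,\dots,2,r)$, halved when $r$ is even. If anything you verify more than the paper does. For the isomorphism type in (a), the paper runs an explicit row-and-column reduction of the principal $(n-1)\times(n-1)$ minor down to ${\rm Diag}(a,\dots,a)$ of size $n-3$, while you get $|\Phi|=a^{n-3}$ from the cofactor identity $M^*=|\Phi|R(^t\!R)$ and then pin down the group by showing it is generated by the $n-3$ elements $e_{c_i}-e_{c_1}$, each killed by $a$; your counting argument (order $a^{n-3}$, exponent dividing $a$, at most $n-3$ generators forces $({\mathbb Z}/a{\mathbb Z})^{n-3}$) is sound, and the torsion-freeness of the quotient does reduce, as you say, to $\gcd(b,n-2)=1$, i.e.\ to $ab-(n-2)=1$. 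In (b) the paper exhibits the structure and leaves the order unverified; your computation $\det=r^{n-4}s\bigl(rs-2(n-3)\bigr)=2sr^{n-4}$ for the deleted-leaf minor, divided by $r_i^2\in\{1,4\}$ according to the parity normalization, supplies it correctly.

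Part (c), however, is a genuine gap. You say you will use ``an entirely analogous explicit structure,'' but you never write one down, and (c) is an existence statement: the structure \emph{is} the proof. Moreover, no structure of the shape used in (b) --- equal centre entries $s$ and uniform leaf entries $r$ with $rs=2(n-2)$ --- can work: for $n=7$ those give $|\Phi|\in\{5,80,125,2000\}$ over the four factorizations of $10$, never $7^{2}=49$. The paper's structure for (c) is genuinely asymmetric: the diagonal is $(n-2,\,n,\dots,n,\,2,2,2)$ with $n$ appearing $n-4$ times (so the two centres carry the distinct entries $n-2$ and $2$, two leaves carry entry $2$, and the remaining $n-4$ leaves carry entry $n$), with $^t\!V=(4,\,2,\dots,2,\,n,n,\,2n-4)$, halved when $n$ is even. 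Nothing in (b) predicts this shape, so ``analogous'' carries no content here. Once that structure is written down, your own machinery does finish the job: deleting a leaf with entry $n$ leaves a ${\mathcal K}(2,n-3)$ with centre entries $n-2$ and $2$ and leaf entries $n$ ($n-5$ times) and $2,2$, whose determinant by your generalized formula $\bigl(\prod_i\gamma_i\bigr)\bigl(\alpha_p\alpha_q-(\alpha_p+\alpha_q)\sum_i\gamma_i^{-1}\bigr)$ equals $4n^{n-5}\bigl[(2n-4)-n\bigl(\tfrac{n-5}{n}+1\bigr)\bigr]=4n^{n-5}$; dividing by the deleted leaf's squared $R$-coordinate ($4$ for $n$ odd, $1$ for $n$ even after halving $V$) yields $n^{n-5}$ and $4n^{n-5}$ respectively.
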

\begin{proof} For convenience, let $q:=n-2$. 
(a) For each factorization $q+1=ab$ in positive integers $a$ and $b$, we describe below an arithmetical structure $(M,R)$ on ${\mathcal K}(2,q)$.
The structure is given by the following matrix $M$ and associated vector $R$ such that $MR=0$:
$$
\left(
\begin{array}{ccccc}
b & -1& \dots & -1 & 0 \\
-1 & a& \dots & 0 & -1 \\
\vdots & 0 & \ddots & 0  & \vdots \\
-1 &0  & \dots & a  & -1 \\
0 & -1& \dots & -1 & qb \\
\end{array}
\right)
\left(
\begin{array}{c}
q \\
b\\
\vdots\\
b\\
1\\
\end{array}
\right)
= 0.
$$
To produce a row and column reduction of the matrix $M$ to compute the group $\Phi$, we first use the relation $MR=0$ to 
find a linear combination of the first $n-1$ columns
which, added to the last column, gives the $0$-vector.
Similarly,  a linear combination of the first $n-1$ rows
added to the last row gives the $0$-vector.
We are thus reduced to compute the row and column reduction of the $n-1 \times n-1$ matrix
$$
\left(
\begin{array}{cccc}
b & -1& \dots & -1  \\
-1 & a& \dots & 0  \\
\vdots & 0 & \ddots & 0   \\
-1 &0  & \dots & a   \\
\end{array}
\right) 
$$
Use the last row to eliminate the entries of the first column. Then use the first column to eliminate the last entry on the last row. Keep the top right submatrix, of size $q \times q$. Add all the columns to the last column. The top right coefficient is $-1+ab-(q-1)=1$.  Use the last column to eliminate the entries of the first row and keep the resulting bottom left 
matrix: it is a diagonal matrix ${\rm Diag}(a,\dots,a)$ of size $q-1$.

(b) For each factorization $2q =rs$ in positive integers $r$ and $s$, we describe below an arithmetical structure $(M,R)$ on ${\mathcal K}(2,q)$.
The structure is given by the following matrix $M$, and associated vector $V$ such that $MV=0$. When $r$ is odd, we take $R=V$, but when $r$ is even, we take $R=V/2$, so that  the $\gcd$ of the coefficients of $R$ is $1$:
$$
\left(
\begin{array}{ccccc}
s & -1& \dots & -1 & 0 \\
-1 & r& \dots & 0 & -1 \\
\vdots & 0 & \ddots & 0  & \vdots \\
-1 &0  & \dots & r  & -1 \\
0 & -1& \dots & -1 & s \\
\end{array}
\right)
\left(
\begin{array}{c}
r \\
2\\
\vdots\\
2\\
r\\
\end{array}
\right)
= 0.
$$ 
Note that when $r=2$ and $s=q$, this structure is the usual Laplacian of the graph.

(c) 
We describe below an arithmetical structure $(M,R)$ on ${\mathcal K}(2,q)$.
The structure is given by the following matrix $M$, and associated vector $V$ such that $MV=0$. When $n$ is odd, we take $R=V$, but when $n$ is even, we take $R=V/2$, so that  the $\gcd$ of the coefficients of $R$ is $1$. In the matrix $M$, the diagonal is $(n-2, n,\dots, n, 2,2,2)$ and the coefficient $n$ appears exactly $n-4$ times.
The transpose of the vector $V$ is 
 $(4, 2,\dots, 2, n,n,2n-4)$ and the coefficient $2$ appears exactly $n-4$ times.
$$
\left(
\begin{array}{ccccc}
n-2 & -1& \dots & -1 & 0 \\
-1 & n& \dots & 0 & -1 \\
\vdots & 0 & \ddots & 0  & \vdots \\
-1 &0  & \dots & 2  & -1 \\
0 & -1& \dots & -1 & 2 \\
\end{array}
\right)
\left(
\begin{array}{c}
4 \\
2\\
\vdots\\
n\\
2n-4\\
\end{array}
\right)
= 0.
$$ 
\end{proof}
\fi
 \end{section}

\begin{section}{Complete Graphs}
 
\begin{proposition} \label{complete}
Let $G={\mathcal K}_n$, the complete graph on $n \geq 2$ vertices. 
Then $1 \in V_G(2)$ if and only if the equation
\begin{equation} \label{Egypt}
\sum_{i=1}^n \frac{1}{y_i} + \frac{1}{y_1   \cdots   y_n} =1
\end{equation}
can be solved with positive integers $y_1,\dots,y_n \geq 3$.
\end{proposition}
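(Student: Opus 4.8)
The plan is to reduce the three defining conditions for $1\in V_G(2)$ to the single Diophantine equation \eqref{Egypt} by first obtaining a closed form for the determinant of $M_{\mathcal{K}_n}(a_1,\dots,a_n)$, and then observing that once the determinant equals $1$, both the positivity condition and the cyclicity condition come for free. First I would compute the determinant. Since the adjacency matrix of $\mathcal{K}_n$ is $J-I_n$, where $J$ is the all-ones matrix, we have
\[
M_{\mathcal{K}_n}(a_1,\dots,a_n)=\mathrm{Diag}(a_1+1,\dots,a_n+1)-J.
\]
Writing $y_i:=a_i+1$ and $D:=\mathrm{Diag}(y_1,\dots,y_n)$, the matrix is $D-\mathbf 1\mathbf 1^{t}$, and the matrix determinant lemma gives
\[
\det\bigl(M_{\mathcal{K}_n}(a_1,\dots,a_n)\bigr)=\Bigl(\textstyle\prod_{i=1}^n y_i\Bigr)\Bigl(1-\textstyle\sum_{i=1}^n \tfrac1{y_i}\Bigr).
\]
Setting this equal to $1$ and dividing by $\prod_i y_i>0$ rearranges exactly into \eqref{Egypt}, and the constraint $a_i\ge 2$ is identical to $y_i\ge 3$. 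Thus condition (i) in the definition of $V_G(2)$ is equivalent to the solvability of \eqref{Egypt} with the two descriptions of the variables in bijection via $y_i=a_i+1$.

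Next I would dispatch conditions (ii) and (iii). For cyclicity, note that when the determinant equals $1$ we have $|\Phi_M|=|\det M|=1$, so $\Phi_M$ is trivial and in particular cyclic; thus (iii) is automatic. For positive definiteness, I would argue directly from the quadratic form: for any $X=(X_1,\dots,X_n)^t\neq 0$,
\[
{}^t\!X\,M\,X=\sum_{i=1}^n y_iX_i^2-\Bigl(\sum_{i=1}^n X_i\Bigr)^2 \ \ge\ \Bigl(\sum_{i=1}^n y_iX_i^2\Bigr)\Bigl(1-\sum_{i=1}^n\tfrac1{y_i}\Bigr),
\]
the inequality being Cauchy--Schwarz applied to the vectors $(\sqrt{y_i}\,X_i)_i$ and $(1/\sqrt{y_i})_i$. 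Since $y_i>0$, the factor $\sum_i y_iX_i^2$ is strictly positive for $X\neq 0$, and whenever the determinant is positive the factor $1-\sum_i 1/y_i$ is positive as well; in particular both hold when the determinant is $1$, so $M$ is positive definite and (ii) holds.

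Combining these observations, both implications follow. A solution of \eqref{Egypt} with $y_i\ge 3$ yields $a_i=y_i-1\ge 2$ for which $M_{\mathcal{K}_n}(a_1,\dots,a_n)$ has determinant $1$, is positive definite, and has trivial (hence cyclic) group, so $1\in V_G(2)$; conversely, any witness for $1\in V_G(2)$ consists of integers $a_i\ge 2$ with determinant $1$, which by the determinant formula produces a solution of \eqref{Egypt} with $y_i=a_i+1\ge 3$. I expect the only point that genuinely needs care to be the positivity check --- specifically, confirming that a \emph{positive} determinant of a matrix of the special form $D-\mathbf 1\mathbf 1^{t}$ already forces positive definiteness rather than merely nonsingularity --- which the Cauchy--Schwarz bound above settles in one line.
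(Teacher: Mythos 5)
Your proof is correct, and it arrives at the same pivotal identity as the paper---equation \eqref{det}, i.e.\ $\det M_{\mathcal K_n}(a_1,\dots,a_n)=\bigl(\prod_{i=1}^n y_i\bigr)\bigl(1-\sum_{i=1}^n 1/y_i\bigr)$ with $y_i=a_i+1$---but by a genuinely different route. The paper borders the matrix with an extra row and column, applies the unimodular congruence $M''=TM'({}^tT)$ to convert it into the matrix $M_H(1,y_1,\dots,y_n)$ of a star $H$ on $n+1$ vertices, and reads off the determinant by expanding $M''$ along its first row; positive definiteness is then transferred through the congruence and checked on $M''$ via Sylvester's criterion. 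You instead recognize $M_{\mathcal K_n}(a_1,\dots,a_n)=D-\mathbf 1\,{}^t\mathbf 1$ as a rank-one update of a diagonal matrix, obtain the determinant from the matrix determinant lemma, and verify positive definiteness directly from the quadratic form ${}^t\!XMX=\sum_i y_iX_i^2-\bigl(\sum_i X_i\bigr)^2$ by Cauchy--Schwarz. Both treatments make conditions (ii) and (iii) automatic once the determinant equals $1$ (the group $\Phi_M$ has order $|\det M|=1$, hence is trivial and cyclic). Your version buys two things: it is self-contained, whereas the paper's congruence trick is chiefly valuable for tying $\mathcal K_n$ to the star graphs that recur elsewhere in the article; and your positivity step is airtight where the paper's is terse---``$\det(M'')=1>0$ and the diagonal entries are positive'' is not by itself a sufficient criterion for positive definiteness (Sylvester requires \emph{all} leading principal minors, which for $M''$ equal $\prod_{j=1}^{k}y_j\bigl(1-\sum_{j=1}^{k}1/y_j\bigr)$ and are positive because every partial sum $\sum_{j=1}^{k}1/y_j$ is $<1$ under \eqref{Egypt}); your Cauchy--Schwarz bound settles this in one line, and does so for all real vectors rather than only the integer vectors required by the paper's definition.
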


\begin{proof}
Consider the square matrix 
$$M':=
\left( 
\begin{array}{cccc}
 1 & 0 & \dots & 0 \\
0 & && \\
\vdots && M_G(x_1,\dots, x_n)& \\
0 & && \\
\end{array}
\right)
$$
This matrix clearly has the same determinant as $M_G(x_1,\dots, x_n)$,
and is row and column equivalent to the matrix 
$$
M'':=\left( 
\begin{array}{cccc}
 1 & -1 & \dots & -1 \\
-1 &  x_1+1 &0& 0 \\
\vdots &0& \ddots & 0 \\
-1 & 0 & 0& x_n+1 \\
\end{array}
\right).
$$
Indeed, letting  
$$
T:=\left( 
\begin{array}{cccc}
 1 & 0 & \dots & 0 \\
-1 &  1 & & 0 \\
\vdots & & \ddots &   \\
-1 & 0 &  & 1 \\
\end{array}
\right),
$$
we find that $M''=TM'(^tT)$.
Note that if $H$ denote the star on $n+1$ vertices $w_0,\dots, w_n$, then the  matrix $M''$ is $M_H(1,x_1+1,\dots, x_n+1)$.
\if false
If  $^tZ:=(z_1,\dots, z_n)$ and  $^tZ':=(z_0,z_1,\dots, z_n)$, then $$^tZ'M'' Z' = (z_0-\sum_{i=1}^n z_i)^2 +(^t \! ZM Z).$$ \fi
It is clear that $M$ is positive definite if and only if $M'$, and hence $M''$, is
positive definite.   
Expanding the determinant of $M''$ using its first row, we obtain that
\begin{equation}
\label{det}
\det(M_G(x_1,\dots,x_n))=  \prod_{j=1}^n (x_j+1) -\sum_{i=1}^n \frac{\prod_{j=1}^n (x_j+1)}{x_i+1}.
\end{equation}

Assume now that $1 \in V_G(2)$. Then we can find integers $x_1,\dots, x_n \geq 2$ such that, setting 
$\ell:= \prod_{j=1}^n (x_j+1)$, we have
$$ 1=\ell -\sum_{i=1}^n \ell /(x_i+1).$$
Setting $y_i=x_i+1$, so that $\ell =\prod_{i=1}^n y_i$, we find  that $ 1/\ell=1 -\sum_{i=1}^n 1/y_i$ with $y_i \geq 3$ for all $i$, as desired.

Reciprocally, assume that there exist $y_1,\dots, y_n \geq 3$ such that
$ 1=\prod_{j=1}^n y_j -\sum_{i=1}^n (\prod_{j=1}^n y_j) /y_i$. Setting $x_i=y_i-1$, we obtain 
$x_1, \dots, x_n \geq 2$ such that $\det(M_G(x_1,\dots,x_n))=1$. 
To show that $1 \in V_G(2)$, it remains to show that $M_G(x_1,\dots,x_n)$ is positive definite. For this, it suffices, as noted above, to argue 
that $M''$ is  positive definite, and this can be obtained using Sylvester's Criterium on the positivity of the leading principal minors. This is clear since $\det(M'')=1>0$ in our case, and all the diagonal elements of $M''$
are positive.
\end{proof}

\begin{remark} In a solution $(y_1, \dots, y_n)$ 
to Equation \eqref{Egypt}, the integers $y_i$ are pairwise coprime. In particular, they are all distinct.
When $n$ is odd, any solution $(y_1, \dots, y_n)$ 
to Equation \eqref{Egypt}  must have at least one $y_i$ even.

When $n=13$, one finds in \cite{B-V}, page 8, a solution to  Equation \eqref{Egypt} in pairwise coprime integers
obtained by Girgensohn  with  $3=y_1<4<5<7<29<  \dots <  y_n $.  The integer $y_{13}$ has $172$ digits. This solution has 
exactly one even entry, $y_2=4$. Note that the same solution is given in \cite{B-J}, page 393, but in that article the given solution has typos.

Once we have a solution $(y_1, \dots, y_n)$ to Equation \eqref{Egypt}, it is possible to extend it
to a solution $(y_1, \dots, y_n, x)$ satisfying 
$$ \sum_{i=1}^n \frac{1}{y_i} + \frac{1}{x} + \frac{1}{y_1 \cdots y_n x}=1$$
by setting $x:= (y_1 \cdots y_n) +1$.  
\end{remark}

\begin{corollary} \label{cor.complete}
Let $G={\mathcal K}_n$ be the complete graph on $n\geq 2 $ vertices.
\begin{enumerate}[\rm (a)]
\item
If $n=13$, then  $1\in V_G(2)$. 
\item If $n\geq 14$, then $V_G(2) = {\mathbb Z}_{\geq 0}$.
\item If $n\leq 7$, then  $1\notin V_G(2)$.
\end{enumerate}
\end{corollary}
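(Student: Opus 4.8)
The plan is to route all three parts through Proposition~\ref{complete}, which identifies the condition $1 \in V_{\mathcal{K}_n}(2)$ with the solvability of the unit-fraction equation \eqref{Egypt} in integers $y_1,\dots,y_n \geq 3$. Part (a) is then immediate: the thirteen pairwise coprime integers $3 = y_1 < 4 < 5 < 7 < 29 < \cdots < y_{13}$ produced by Girgensohn and recorded in \cite{B-V} (see the remark following Proposition~\ref{complete}) form a solution of \eqref{Egypt} for $n = 13$, so Proposition~\ref{complete} gives $1 \in V_{\mathcal{K}_{13}}(2)$.

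For part (b) I would bootstrap from (a) using Theorem~\ref{thm.main1}. Take $H = \mathcal{K}_{13}$, for which $1 \in V_H(2)$. For each $n \geq 14$ the complete graph $\mathcal{K}_n$ is obtained from $\mathcal{K}_{13}$ through the chain of induced subgraphs $\mathcal{K}_{13} \subset \mathcal{K}_{14} \subset \cdots \subset \mathcal{K}_n$, where the vertex added at the step $\mathcal{K}_m \subset \mathcal{K}_{m+1}$ is joined to all $m \geq 13$ previous vertices and so has degree at least $2$. Hence the hypotheses of Theorem~\ref{thm.main1}(b)(ii) are satisfied, and that theorem yields $V_{\mathcal{K}_n}(2) = \mathbb{Z}_{\geq 0}$.

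Part (c) is the substantive one. By Proposition~\ref{complete} it suffices to prove that \eqref{Egypt} has no solution with $y_i \geq 3$ once $n \leq 7$. First I would clear denominators, writing $P := \prod_i y_i$, to get $P = 1 + \sum_i \prod_{k \neq i} y_k$; reducing this modulo a prime dividing two of the $y_i$ gives $0 \equiv 1$, so the $y_i$ are pairwise coprime, hence distinct, and I may order them $3 \leq y_1 < \cdots < y_n$. The equation now reads $\sum_i 1/y_i = 1 - 1/P$. Because the greedy pairwise-coprime tuple $g = (3,4,5,7,11,13,17)$ minimizes the product among admissible $n$-tuples, one has $P \geq \prod_{i \leq n} g_i$, so a solution would force $\sum_i 1/y_i$ into the narrow window $[\,1 - 1/\prod_{i \leq n} g_i,\,1)$. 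For $n \leq 4$ this is already impossible, since the largest admissible reciprocal sum $1/3 + 1/4 + 1/5 + 1/7 = 389/420$ is strictly below $1 - 1/420$. For $5 \leq n \leq 7$ the greedy reciprocal sum instead exceeds $1$, so I would run the standard finite descent: the bounds $\sum_{i \geq j} 1/y_i \leq (n-j+1)/y_j$ and $\sum_i 1/y_i \geq 1 - 1/\prod g_i$ give, at each stage, an explicit upper bound on $y_j$ in terms of $y_1,\dots,y_{j-1}$, which together with $y_j > y_{j-1}$ and coprimality to the prefix leaves only finitely many branches; each branch is then checked against the exact identity $\sum_i 1/y_i = 1 - 1/P$ (already for $n = 5$ every surviving chain collapses to a non-integral value of the last denominator).

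The main obstacle is precisely part (c) in the range $5 \leq n \leq 7$. The clean inequality disposes of $n \leq 4$ in one line, but once the greedy reciprocal sum surpasses $1$, the target sum is pinned to within $1/\prod g_i$ of $1$, so crude estimates no longer close the gap; one must track the exact rational value at the leaves of the search tree, carrying out the explicit (finite, elementary, but branching) computation or else appealing to the tabulations in \cite{B-V, B-J}.
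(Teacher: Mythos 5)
Your parts (a) and (b) are exactly the paper's argument: (a) invokes Proposition~\ref{complete} together with the Girgensohn solution recorded in \cite{B-V}, and (b) is the paper's induction via Theorem~\ref{thm.main1}~(b), using that ${\mathcal K}_{m+1}$ is the cone on ${\mathcal K}_m$ (so the added vertex has degree $m\geq 2$).

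The divergence --- and the gap --- is in part (c). The paper disposes of $n\leq 7$ by citing the complete list of solutions of \eqref{Egypt} for $n\leq 7$ tabulated in \cite{B-H} (page 50 and Appendix): every solution there has $y_1=2$, hence none has all $y_i\geq 3$, and Proposition~\ref{complete} concludes. You instead propose a self-contained finite descent. Your structural preliminaries are sound: pairwise coprimality of the $y_i$ is derived exactly as in the paper's remark, and your one-line disposal of $n\leq 4$ (the maximal admissible reciprocal sum $1/3+1/4+1/5+1/7=389/420$ cannot reach $1-1/P$ with $P\geq 420$) is correct. But for $5\leq n\leq 7$ --- which you yourself identify as the substantive range --- the proof is not actually given: the claim that for $n=5$ ``every surviving chain collapses to a non-integral value of the last denominator'' is asserted, not verified, and for $n=6,7$ nothing beyond the general search strategy is offered. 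That enumeration is precisely the (computer-assisted) work of Brenton and Hill; it is finite and elementary in principle, but until the branches are actually run, part (c) remains unproven in this range. Note also that your fallback citation is off: the tabulation you would need is in \cite{B-H}, not \cite{B-V} or \cite{B-J}; \cite{B-V}, page 8, only asserts (without the list) that all solutions with $n\leq 9$ have $y_1=2$, and the paper explicitly states it could not retrieve the lists for $n=8,9$ --- which is exactly why its statement of (c) stops at $n\leq 7$, where \cite{B-H} supplies the full list.
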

\begin{proof}
(a) In view of Proposition \ref{complete}, it suffice to show that 
the equation \eqref{Egypt} can be solved with $y_1,\dots, y_n \geq 3$. 
When $n=13$, we use the solution
provided in \cite{B-V}, page 8, and mentioned in the previous remark.

(b) The graph ${\mathcal K}_{n+1}$ is the cone on the graph ${\mathcal K}_n$.  
Therefore, Theorem \ref{thm.main1} (b) shows that if $1 \in V_{{\mathcal K}_n}(2)$, then $V_{{\mathcal K}_{n+1}}(2)={\mathbb Z}_{ \geq 0}
 $.
 
 (c) It is mentioned in \cite{B-V}, page 8, that all solutions $y_1 \leq \dots \leq y_n$ to Equation \eqref{Egypt} with $n \leq 9$ have $y_1=2$. We have not been able to retrieve the list of known solutions for $n=8,9$ to verify this claim. The list of solutions with $n \leq 7$ is provided in \cite{B-H}, page 50 and Appendix. 
 The claim (c) then follows from Proposition \ref{complete}.
\end{proof}

\begin{remark} Let $G={\mathcal K}_n$ be a complete graph with
$3 \leq n\leq 13$.
Corollary \ref{cor.density} shows that the complement
 of $V_{d_G}(2)$ in ${\mathbb Z}_{\geq 0}$ has density $0$. It is natural to wonder whether the complement is finite.
We discussed already the case $n=3$ in \ref{ex.C3}. Computations in the case   $n=4$ also seem to indicate that the complement in this case might be quite substantial.
\end{remark}

\end{section}
 
 \begin{acknowledgement}
The author gratefully acknowledge funding support from the Simons Collaboration Grant 245522. He also thanks the referee for a thoughtful report, and  Carlos Alfaro and Joshua Stucky
for bringing to his attention the references \cite{GRT} and \cite{CS}, respectively.
\end{acknowledgement}


\begin{thebibliography}{ccccc}

\bibitem{AFG} S. Aletheia-Zomlefer, L. Fukshansky,  and S. Garcia, {\it
The Bateman-Horn conjecture: heuristic, history, and applications},
Expo. Math. {\bf 38} (2020), no. 4, 430--479. 


\bibitem{bident} K. Archer, A. Bishop, A. Diaz-Lopez, L. García Puente, D. Glass, and J. Louwsma, {\it
Arithmetical structures on bidents},
Discrete Math. {\bf 343} (2020), no. 7, 111850, 23 pp. 

\bibitem{Art} M. Artin, {\it
On isolated rational singularities of surfaces},
Amer. J. Math. {\bf 88} (1966), 129--136. 



\bibitem{BHN} R. Bacher, P. de la Harpe, T. Nagnibeda, {\it The lattice of integral flows and the lattice of integral cuts on a finite graph}, Bull. Soc. Math. France {\bf 125} (1997) 167–198. 

\bibitem{Big} N. Biggs, {\it
Chip-firing and the critical group of a graph}, 
J. Algebraic Combin. {\bf 9} (1999), no. 1, 25--45. 


\bibitem{B-C} J. Borwein and K.-K. Choi, {\it 
On the representations of $xy+yz+zx$},
Experiment. Math. {\bf 9} (2000), no. 1, 153--158. 

\bibitem{BCCGGKMMV}
B. Braun, H. Corrales, S. Corry, L. García Puente, D. Glass, N. Kaplan, J. Martin, G. Musiker, and C. Valencia, {\it Counting arithmetical structures on paths and cycles}, Discrete Math. {\bf  341 } (2018), no. 10, 2949--2963.

\bibitem{B-D} L. Brenton and D. Drucker, {\it
Perfect graphs and complex surface singularities with perfect local fundamental group},
Tohoku Math. J. (2) {\bf 41} (1989), no. 4, 507--525. 

\bibitem{B-H} L. Brenton and R. Hill, {\it
On the Diophantine equation $1=\sum 1/n_i+1/\prod n_i$ and a class of homologically trivial complex surface singularities},
Pacific J. Math. {\bf 133} (1988), no. 1, 41--67. 

\bibitem{B-J} L.\ Brenton and L.\ Jaje, {\it Perfectly weighted graphs}, Graphs Combin. {\bf 17} (2001), no. 3, 389--407.

\bibitem{B-V} L. Brenton and A. Vasiliu, {\it 
Znam's problem},
Math. Mag. {\bf 75} (2002), no. 1, 3--11. 



\bibitem{BJM}
W. Butske, L. Jaje, and D. Mayernik, {\it 
On the equation $\sum_{p \mid N} (1/p)+(1/N)=1$, pseudoperfect numbers, and perfectly weighted graphs},
Math. Comp. {\bf 69} (2000), no. 229, 407--420. 

\if false
\bibitem{CLL} Z. Cao, 
R. Liu, and  
L. Zhang, {\it On the equation $ {\textstyle \sum _{j=1}^{s}(1/x_{j})+(1/(x_{1}\cdots x_{s}))=1}$ and Zn\'am's problem}, J. Number Theory {\bf 27} (2) (1987), 206--211.
\fi

\bibitem{CS} B. Conrey and N. Shah, {\it Which numbers are not the sum plus the product of three positive integers}, Preprint 2021. 

\bibitem{C-V} H. Corrales and C. Valencia, {\it
On the critical ideals of graphs},
Linear Algebra Appl. {\bf 439} (2013), no. 12, 3870--3892. 

\bibitem{C-V18} H. Corrales and C. Valencia, {\it 
Arithmetical structures on graphs},
Linear Algebra Appl. {\bf 536} (2018), 120--151. 

\bibitem{DHL} J.-M. Deshouillers, F. Hennecart, and B. Landreau, {\it
On the density of sums of three cubes}, (English summary) Algorithmic number theory, 141--155,
Lecture Notes in Comput. Sci., 4076, Springer, Berlin, 2006.

\bibitem{DG} M. Duggan and L. Gerstein, {\it 
Unimodular embeddings},
J. Number Theory {\bf 133} (2013), no. 8, 2715--2721. 

\bibitem{Dur} A. Durfee, {\it
Fifteen characterizations of rational double points and simple critical points},
Enseign. Math. (2) {\bf 25} (1979), no. 1--2, 131--163.

\bibitem{GRT} N. Ghareghani, F. Ramezani, and B. Tayfeh-Rezaie, {\it 
Graphs cospectral with starlike trees},
Linear Algebra Appl. {\bf 429} (2008), no. 11--12, 2691--2701. 

\bibitem{Ger} L. Gerstein, {\it 
Nearly unimodular quadratic forms},
Ann. of Math. (2) {\bf 142} (1995), no. 3, 597--610.  

\bibitem{Hir} F. Hirzebruch, {\it 
Über Singularit\"aten komplexer Fl\"achen},
Rend. Mat. e Appl. (5) {\bf 25} (1966), 213--232.  

\bibitem{I-R} K. Ireland and M. Rosen, {\it
A classical introduction to modern number theory},
Second edition. Graduate Texts in Mathematics, 84. Springer-Verlag, New York, 1990. 


\bibitem{L-N} F.\ Leighton and M.\ Newman, {\it Positive definite matrices and Catalan numbers}, Proc. Amer. Math. Soc. {\bf 79} (1980), no. 2, 177--181. 

\bibitem{Liubook} Q. Liu, {\it Algebraic geometry and arithmetic curves}, 
Oxford Graduate Texts in Mathematics {\bf 6}, Oxford University Press, 
paperback new edition (2006).

\bibitem{Lor89} D. Lorenzini, {\it Arithmetical graphs}, Math. Ann. {\bf 285 } (1989), no. 3, 481--501.

\bibitem{Lor1990} D. Lorenzini, {\it  
Groups of components of Néron models of Jacobians},
Compositio Math. {\bf 73} (1990), no. 2, 145-–160. 



\bibitem{Lor08} D. Lorenzini, {\it Smith normal form and Laplacians}, J. Combin. Theory Ser. B {\bf 98} (2008), no. 6, 1271--1300. 
 
\bibitem{Lor12} D. Lorenzini, {\it Two-variable zeta-functions on graphs and Riemann-Roch theorems}, Int. Math. Res. Not. IMRN 2012, no. 22, 5100--5131.

\bibitem{Lor13} D. Lorenzini, {\it Wild quotient singularities of surfaces}, Math. Z. {\bf 275} (2013), no. 1-2, 211--232.



\bibitem{L-S} D. Lorenzini and S. Schroeer, {\it Discriminant group of wild cyclic quotient singularities}, Algebra Number Theory
{\bf 17} (2023), no. 5, 1017–1068.




\bibitem{New} M. Newman, {\it 
Tridiagonal matrices},
Linear Algebra Appl. {\bf 201} (1994), 51--55. 


\bibitem{Ogg} A. Ogg, {\it 
On pencils of curves of genus two}, Topology {\bf 5} (1966), 355--362. 

\bibitem{Ple} R. Plemmons, {\it
M-matrix characterizations. I. Nonsingular M-matrices},
Linear Algebra Appl. {\bf 18} (1977), no. 2, 175--188. 


\bibitem{Tat} J. Tate, {\it
Algorithm for determining the type of a singular fiber in an elliptic pencil},  Modular functions of one variable, IV (Proc. Internat. Summer School, Univ. Antwerp, Antwerp, 1972), pp. 33-52. Lecture Notes in Math., Vol. {\bf 476}, Springer, Berlin, 1975. 



\bibitem{Woo} M. Wood, {\it The distribution of sandpile groups of random graphs}, J. Amer. Math. Soc. {\bf 30} (2017), no. 4, 915--958. 
 
\end{thebibliography}
\end{document}